\theoremstyle{plain}
\newtheorem{thm}{Theorem}[section]
\newtheorem{prop}[thm]{Proposition}
\newtheorem{lemma}[thm]{Lemma}
\newtheorem{coro}[thm]{Corollary}
\long\def\symbolfootnote[#1]#2{\begingroup%
\def\thefootnote{\fnsymbol{footnote}}\footnote[#1]{#2}\endgroup}
\title{Quantitative spectral gap for thin groups of hyperbolic isometries}
\author{Michael Magee}
\address{Department of Mathematics, University of California at Santa Cruz, 1156 High Street, Santa Cruz, CA 95064.}
\email{mmagee@ucsc.edu}
\thanks{The author was supported in part by DARPA via AFOSR grant FA9550-08-1-0315 and by the University of California via the Chancellor's Fellowship.}
\begin{document}
\begin{abstract}
Let $\Lambda$ be a subgroup of an arithmetic lattice in $\mathrm{SO}(n+1 , 1)$. The quotient $\mathbb{H}^{n+1} / \Lambda$ has a natural family of congruence covers corresponding to ideals in a ring of integers. We establish a super-strong approximation result for Zariski-dense $\Lambda$ with some additional regularity and thickness properties. Concretely, this asserts a quantitative spectral gap for the Laplacian operators on the congruence covers. This generalizes results of Sarnak and Xue (1991) and Gamburd (2002).
\end{abstract}
\maketitle

\renewcommand{\baselinestretch}{1.5}

\def\Z{\mathbb{Z}}
\def\FZ{\mathbb{F}_2^{\Z}}
\def\CFZ{\mathbb{C} \FZ }
\def\CS{C$^*$}
\def\Hom{\mathrm{Hom}}
\def\IM{\mathrm{Im}}
\def\R{\mathbb{R}}
\def\Z{\mathbf{Z}}
\def\C{\mathbb{C}}
\def\VNA{von Neumann algebra}
\def\B{\mathcal{B}}
\def\G{\Gamma}
\def\SLT{\mathrm{SL}(2,\mathbb{R})}
\def\SL3{\mathrm{SL}(3,\mathbb{R})}
\def\SLN{\mathrm{SL}(n,\mathbb{R})}
\def\USV{U(\sigma,v)}
\def\a{\alpha}
\def\b{\beta}
\def\M{\mathcal{M}(G)}
\def\g{\gamma}
\def\CIC{C_c^\infty}
\def\CP1{\mathbb{C}P^1} 
\def\CP13{\mathbb{C}P^1 \times \mathbb{C}P^1 \times \mathbb{C}P^1}
\def\Ind{\mathrm{Ind}}
\def\Res{\mathrm{Res}}
\def\End{\mathrm{End}}
\def\Dim{\mathrm{Dim}}
\def\d{\delta}
\def\e{\epsilon}
\def\s{\sigma}
\def\Isom{\mathrm{Isom}}
\def\H{\mathbb{H}}
\def\adj{\mathrm{adj}}
\def\tchi{\tilde{\chi}}
\def\N{\mathbb{N}}
\def\L{\Lambda}
\def\P{\mathcal{P}}
\def\Spin{\mathrm{Spin}}
\def\K{\mathcal{K}}
\def\GG{\mathcal{G}}
\def\SO{\mathrm{SO}}
\def\O{\mathcal{O}}
\def\SL{\mathrm{SL}}
\def\Q{\mathbf{Q}}
\def\F{\mathcal{F}}
\def\vp{\varphi}

\def\RR{\mathcal{I}}
\def\lg{\mathfrak{g}}
\def\Aut{\mathrm{Aut}}
\def\Tr{\mathrm{Tr}}


\section{Introduction} 
Let $n > 1$, $F$ a totally real number field with a fixed infinite place, and  $\GG = \SO(F^{n+2} , q)$ the closed $F$-subgroup of $\mathrm{GL}_{n+2}$ which preserves a quadratic form $q$ defined over $F$. We require $\GG(\R) \cong \SO (n+1,1)$ at the fixed place and compact at the other real places. Let $\O_F$ be the ring of integers of $F$ and $\Gamma = \mathcal{G}(F) \cap \mathrm{GL}_{n+2}(\O_F)$. Ideals $\RR$ in $\O_F$ give a level structure by defining
\begin{equation}
\Gamma(\RR) = \{ \g \in \G : \g \equiv I \mod \RR \}
\end{equation}
the \textit{principal congruence subgroup} at level $\RR$. Let $\Lambda$ be a subgroup of $\G$ which is Zariski-dense in $\GG(\C)$. We also assume that the traces of $\Lambda$ in the adjoint representation generate the ring $\O_F$. Then $\Lambda$ inherits a level structure by defining
\begin{equation}
\Lambda(\RR) = \Lambda \cap \Gamma(\RR).
\end{equation}
The groups $\G$, $\Lambda$, $\Lambda(\RR)$ act by isometries on hyperbolic space $\H^{n+1}$. On one hand, $\Lambda$ can be \textit{thin} in $\Gamma$ as it is possibly infinite index, but on the other hand it is thick enough (Zariski-dense with large trace field) so that the group structure does not degenerate. We further require that $\Lambda$ is geometrically finite. This means that any Dirichlet fundamental domain for $\Lambda$ in $\H^{n+1}$ is finitely faced, and implies that $\Lambda$ is finitely generated\footnote{We refer the reader to \cite{KAP00} for general details about hyperbolic geometry.}.

Adding this geometric regularity gives us a further gauge of thickness. For a point $o \in \H^{n+1}$ the orbit $\Lambda o $ accumulates on a subset of the boundary $S^{n}_\infty$ of $\H^{n+1}$. This is called the limit set of $\Lambda$ and denoted $L(\Lambda)$. This set is Cantor-like and has an associated Hausdorff dimension $\delta(L(\Lambda))$. The motif of this paper is that if $\delta(L(\Lambda))$ is large enough, that is we have 'sufficient thickness', then there is a 'super-strong' approximation statement for the congruence quotients $\Lambda / \Lambda(\RR)$. Concretely, this asserts the existence of a spectral gap.

The congruence quotients $\Lambda / \Lambda(\RR)$ act as deck transformations on the quotient space $X(\RR) \equiv \H^{n+1} / \Lambda(\RR)$ and induce a locally isometric covering
\begin{equation}
\pi_\RR : X(\RR) \to X \equiv \H^{n+1} / \Lambda.
\end{equation}
We make some extra assumptions on $\Lambda$ to ease exposition. By Selberg's Lemma \cite{SEL60}, which states that any finitely generated matrix group has a normal subgroup of finite index without torsion, we can pass to a finite index normal subgroup without elliptic or orientation reversing elements. In addition we may need to pass to a subgroup which is the kernel of the spinor norm at a localization of $\O_F$, we deal with this subtlety in Section \ref{algebra}. Any analysis will then occur on finite coverings of the initial $X$ and $X(\RR)$. 

It follows in Section \ref{algebra} from the work of Weisfeiler \cite{WEI84} that away from finitely many primes the natural inclusion $\Lambda / \Lambda(\RR) \hookrightarrow \G / \G(\RR)$ is \textit{onto} some large subgroup\footnote{The kernel of a spinor norm at a finite semi-local ring.} $(\G / \G(\RR) )'$. In other words, if we choose a set of generators  $S = \{ A_1 , \ldots , A_k \}$ for $\Lambda$ and consider the Cayley graph $\mathcal{H}_\RR \equiv \mathcal{H} (  S\G(\RR) , (\G / \G(\RR) )' )$ then this graph is \textit{connected}. This is the initial strong approximation statement which is to be strengthened.

A natural such strengthening is to insist that these graphs be 'highly connected', a concept which can be made precise by defining the expansion coefficient of a $k$-regular graph $\mathcal{H}$ 
\begin{equation}
c( \mathcal{H} ) = \inf \left\{ \: \frac{ | N(W) | }{ |W| } \: : \: |W| < \frac{1}{2} |\mathcal{H}| \: \right\}
\end{equation}
where $W$ runs over subsets of the vertex set of $\mathcal{H}$ and $N(W)$ denotes its set of neighbours. The Cayley graphs in question have associated discrete Laplacians which are neighbour-averaging operators on functions on the vertices. As in \cite{LU93} the spectral theory of these operators is related to the expansion coefficients of the graphs. The highly-connectedness will then be a property of a family of graphs (in our case $\mathcal{H}_\RR$), we say that $\mathcal{H}_\RR$ are a family of expanders if there exists $C$ real such that
\begin{equation}
\liminf_{|\O_F / \RR| \to \infty} c(\mathcal{H}_\RR ) \geq C > 0 .
\end{equation}
Suitably reinterpreted this asserts a spectral gap for the graph Laplacians. It is a fairly direct consequence\footnote{This argument appears in detail in \cite{GAM02}.} of Fell's continuity of induction \cite{FEL62} that by passing through representation theoretic descriptions of the action of the Laplacians (graph theoretic and diffeo-geometric), it is sufficient to prove a spectral gap result 'up above' to establish the expansion property for the Cayley graphs $\mathcal{H}_\RR$. To summarize, the spectral gap for the manifolds $X(\RR)$ would imply a connectedness property which naturally goes beyond strong approximation, hence 'super-strong' approximation. 

To see the asserted spectral gap, we consider the Laplacian operators $\Delta_X$, $\Delta_{X(\RR)}$, and in particular, their $L^2$ spectra denoted $\Omega(X)$, $\Omega(X(\RR))$ respectively. The bottom of the spectrum was characterized by Sullivan in \cite{SUL82}, and Lax and Phillips established a finiteness property in \cite{LP82}. These results can be summed up as follows.
\newpage
\begin{thm}[Sullivan, Lax and Phillips] Suppose $\d > n/2$. The following hold:
\begin{enumerate}
\item The bottom of the $L^2$ spectrum of the Laplacian on $X$ (resp. $X(\RR)$) is an eigenvalue of multiplicity one at $\lambda_0 = \delta( n - \delta )$.
\item The $L^2$ spectrum in the range $[ \delta( n - \delta ) , n^2  / 4 )$ consists of finitely many discrete eigenvalues.
\end{enumerate}
\end{thm}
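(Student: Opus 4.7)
The plan is to treat the two parts separately, using Patterson--Sullivan theory for (1) and the Lax--Phillips end decomposition for (2).

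For part (1), I would first construct a $\L$-conformal density $\{\mu_x\}_{x \in \H^{n+1}}$ of dimension $\d$ supported on $L(\L)$ via Patterson's procedure: form the Poincar\'e-type measure $\sum_{\g \in \L} e^{-s\, d(o,\g o)} \delta_{\g o}$, insert the logarithmic correction factor if the series converges at $s = \d$, normalize, and take a weak-$*$ limit as $s \searrow \d$. The resulting family transforms by the $\d$-th power of the conformal derivative under $\L$. One then sets
\[
\vp_0(x) \;=\; \int_{S^n_\infty} P(x,\xi)^\d \, d\mu_o(\xi),
\]
where $P$ is the Poisson kernel. A direct computation in the ball model gives $\Delta_{\H^{n+1}} \vp_0 = \d(n-\d)\vp_0$, while the conformal density property makes $\vp_0$ a $\L$-invariant positive function on $\H^{n+1}$, so it descends to $X$. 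The crux is showing $\vp_0 \in L^2(X)$: using Sullivan's shadow lemma on a Dirichlet fundamental domain for $\L$ (finitely faced by geometric finiteness) together with the hypothesis $\d > n/2$, one controls the decay of $\vp_0$ on the ends and obtains $\int_X \vp_0^2\, d\mathrm{vol} < \infty$. Positivity of $\vp_0$ forces any bottom eigenfunction to be a scalar multiple of it (two orthogonal ground states would contradict positivity), giving simplicity. That no smaller $L^2$ eigenvalue exists follows from the characterization of $\d$ as the critical exponent: an eigenvalue $s(n-s) < \d(n-\d)$ with $s > \d$ would produce, via an analogous conformal density construction, convergence of the Poincar\'e series past $\d$, a contradiction.

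For part (2), I would follow Lax--Phillips and exploit that any geometrically finite hyperbolic manifold admits a decomposition $X = K \cup \bigcup_i E_i$ in which $K$ is compact and each $E_i$ is a standard funnel or cusp neighborhood. On a funnel end, separation of variables diagonalizes the Laplacian and yields absolutely continuous spectrum $[n^2/4, \infty)$; on a maximal-rank cusp the continuous spectrum likewise begins at $n^2/4$. Applying Glazman's decomposition principle to $\Delta_X$ and $\Delta_K \oplus \bigoplus_i \Delta_{E_i}$ with Dirichlet boundary conditions along the common hypersurfaces, one concludes that the essential spectrum of $\Delta_X$ coincides with the union of essential spectra of the ends, hence has infimum at least $n^2/4$. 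Consequently the spectrum in $[\d(n-\d), n^2/4)$ is purely discrete, and the min--max principle together with compactness of $K$ bounds the number of eigenvalues in any closed subinterval of $[\d(n-\d), n^2/4)$, giving finiteness.

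The principal obstacle is the cusp analysis in part (2) when $\L$ possesses cusps of non-maximal rank $k < n$. For such cusps the bottom of the cusp-end spectrum drops to $\bigl(\tfrac{n-k}{2}\bigr)^2 \cdot \ldots$ type thresholds and may a priori lie below $n^2/4$. One must show, as Lax--Phillips do, that the extra discrete contribution remains finite: under $\d > n/2$ the parabolic fixed points of non-maximal rank are of Patterson--Sullivan measure zero and the associated cuspidal eigenfunctions are controlled by a Weyl-type count. The sharpest version of (2), giving finiteness throughout $[\d(n-\d), n^2/4)$, then combines this count with the Glazman argument above.
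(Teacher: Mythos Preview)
The paper does not prove this theorem; it is quoted as background from Sullivan \cite{SUL82} and Lax--Phillips \cite{LP82}, so there is no ``paper's own proof'' to compare against. Your sketch is broadly the standard route: Patterson--Sullivan density plus Poisson transform for (1), and an end decomposition/essential-spectrum argument for (2), which is indeed how Sullivan and Lax--Phillips proceed.

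One point deserves correction. Your stated ``principal obstacle'' for non-maximal rank cusps is based on a misconception: the essential-spectrum threshold for a rank-$k$ cusp in $\H^{n+1}$ does \emph{not} drop below $n^2/4$. In the coordinates $(x,[y,z])$ with metric $x^{-2}(dx^2+dy^2+dz^2)$, after $x=e^{-\tau}$ and conjugation by $|g|^{1/4}$ the Laplacian becomes
\[
-\partial_\tau^2 + e^{-2\tau}\Delta^{\mathrm{Eucl}}_u + \frac{e^{-2\tau}}{(e^{-2\tau}+|u|^2)^2}\Delta^{\mathrm{flat}}_z + \frac{n^2}{4},
\]
and the three operator terms are nonnegative on compactly supported functions regardless of $k$. (This is exactly the content of Lemmas \ref{cusppos1} and \ref{cusppos} later in the paper.) So the Glazman decomposition argument you outline goes through uniformly over all cusp ranks, and no separate ``Weyl-type count'' or Patterson--Sullivan measure-zero argument for parabolic points is needed to locate the essential spectrum. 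The finiteness in $[\delta(n-\delta),n^2/4)$ then follows directly from discreteness below the essential spectrum; the genuinely delicate part of Lax--Phillips is rather the spectral analysis \emph{at and above} $n^2/4$, which is not relevant to the statement at hand.
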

The hypothesis of our main theorem will imply $\delta > n/2$. 

The Hausdorff dimension of the limit set $L(X(\RR))$ is the same for all $\RR$, so this result tells us the bottom of the spectrum at all levels. Moreover any eigenfunction of $\Delta_{X}$ lifts to an eigenfunction of $\Delta_{X(\RR)}$ with the same eigenvalue, so by the finiteness statement in the previous theorem we know that at level $\RR$ the discrete spectrum of $\Delta_{X(\RR)}$ in $(0,n^2 /4)$ consists of that of $\Delta_X$ in addition to finitely many new eigenvalues. Our main theorem gives an explicit range in which there can be no new eigenvalues (for $\RR$ avoiding finitely many primes). Together with the finiteness of the spectrum in $(0,n^2/4)$ for $X$ and at each of the finitely many excluded levels, this implies the existence of a spectral gap.

Before stating our main theorem we give some history of the spectral gap. Our starting point is Selberg's seminal paper \cite{SEL65} where it is proved
\begin{thm}[Selberg]\label{316}
Let $\G(N)$ be the principal congruence subgroup of $\SL_2(\Z)$ at level $N$, and $\G' \supset \G(N)$. Then letting $X'(N) = \H^2 / \G'$ and writing $\lambda_1(X'(N))$ for the first non-zero eigenvalue of $\Delta_{X'(N)}$ we have for any $N \geq 1$
\begin{equation}\label{sel1}
\lambda_1(X'(N)) \geq 3/16 . 
\end{equation}
\end{thm}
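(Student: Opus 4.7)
The plan is to exploit the duality between the spectral expansion of a Poincar\'e series and the Kloosterman zeta function, with Weil's bound on Kloosterman sums providing the essential arithmetic input. Since any eigenfunction of $\Delta_{X'(N)}$ lifts to an eigenfunction of the same eigenvalue on $X(N) = \G(N) \backslash \H^2$, it suffices to prove the bound on $X(N)$ itself. Any candidate exceptional eigenvalue $\lambda \in (0, 1/4)$ must come from a Maass cusp form; writing $\lambda = s(1-s)$ with $s \in (1/2, 1)$, the goal is to show $s \leq 3/4$.

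First I would introduce the Poincar\'e series
\begin{equation*}
P_m(z, s') = \sum_{\gamma \in \G_\infty \backslash \G(N)} (\mathrm{Im}\, \gamma z)^{s'} e^{2\pi i m \, \mathrm{Re}\, \gamma z},
\end{equation*}
which converges absolutely for $\Re(s') > 1$, and unfold to compute its Fourier expansion at the cusp $\infty$; the non-trivial coefficients involve the Kloosterman sums $S(m,n;c)$ attached to $\G(N)$.  Computing the Petersson inner product $\langle P_m, P_n \rangle$ in two ways --- Parseval on the Fourier side, and Selberg spectral decomposition on the other --- yields an identity whose left hand side is, up to elementary factors, the Kloosterman zeta function
\begin{equation*}
Z_{m,n}(s') = \sum_c \frac{S(m,n;c)}{c^{2s'}},
\end{equation*}
while the right hand side is a sum of Gamma factors with poles at the spectral parameters $s_j$ of the discrete spectrum, plus an Eisenstein contribution.

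Each exceptional eigenvalue $\lambda_j$ with $s_j > 1/2$ therefore forces $Z_{m,n}(s')$ to have a pole at $s' = s_j$ after meromorphic continuation. On the other hand, Weil's bound
\begin{equation*}
|S(m,n;c)| \leq \tau(c)\, (m,n,c)^{1/2}\, c^{1/2}
\end{equation*}
shows that $Z_{m,n}(s')$ converges absolutely, and hence is holomorphic, in the half-plane $\Re(s') > 3/4$. Consequently no exceptional $s_j$ can exceed $3/4$, yielding $\lambda_j \geq (3/4)(1/4) = 3/16$ for every eigenvalue in $(0, 1/4)$, which gives (\ref{sel1}).

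The main obstacle, and the essential arithmetic input, is Weil's bound itself, which rests on the Riemann hypothesis for curves over finite fields; its square-root exponent $c^{1/2}$ is precisely what produces the threshold $s = 3/4$ and the constant $3/16$. The remaining steps --- construction of the Poincar\'e series, unfolding, and spectral decomposition --- are standard Rankin--Selberg style analytic bookkeeping, with the only subtlety being the meromorphic continuation of $Z_{m,n}(s')$ past the line $\Re(s') = 1$ in a manner that cleanly extracts its pole structure from the spectral side.
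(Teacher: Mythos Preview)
The paper does not prove Theorem~\ref{316}; it is quoted as a historical result of Selberg \cite{SEL65} in the introduction, with a footnote noting that ``the estimates for Kloosterman sums come from Weil \cite{WEIL48} and appeal to the Riemann hypothesis for curves.'' Your sketch is precisely this classical Selberg--Kloosterman approach and is correct in outline: the reduction to cusp forms on $X(N)$, the link between the spectral poles and the Kloosterman zeta function via Poincar\'e series, and the use of Weil's square-root bound to force holomorphy in $\Re(s') > 3/4$ are exactly the ingredients of the original proof. So there is nothing to compare --- you have reproduced the argument the paper is citing, not one the paper itself supplies.
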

It was conjectured by Selberg at the same time that in fact, with notation as before
\begin{equation}
\lambda_1(X'(N)) \geq 1/4 .
\end{equation} 
Selberg's $3/16$ result is no longer the state of the art\footnote{Luo, Rudnick and Sarnak in \cite{LRS95} proved $\lambda_1(X'(N)) \geq 171/784$ by using properties of $\mathrm{GL}_3$ Rankin-Selberg $L$-functions. Shortly after this Iwaniec \cite{IWA96} proved the slightly weaker $\lambda_1(X'(N)) \geq 10/49$ by using only the $\mathrm{GL}_2$ theory. Kim and Shahidi \cite{KS02} proved $\lambda_1(X'(N)) \geq 66/289$ via the existence of the functorial symmetric cube for $\mathrm{GL}_2$. After some further developments along these lines (functorial powers) Kim and Sarnak \cite{KS03} proved the current best result, which says $\lambda_1(X'(N)) \geq 975/4096$.
}, however, the conjectured $1/4$ remains unattained. This is a fundamental open problem of modular forms. The reader can read Sarnak's notice \cite{SAR95} for a friendly exposition of the subject, as well as the notes of Sarnak \cite{SAR05} for more recent developments.
 
Theorem \ref{316} was generalized to three dimensions by Sarnak \cite{SAR83} where it was proved that if $E$ is any quadratic imaginary number field and $\O_E$ the ring of integers then
\begin{equation}
\lambda_1( \H^3 / \SL_2(\O_E) ) \geq 3/4 .
\end{equation}
The method used there is very much in the spirit of Selberg's proof of \eqref{sel1}. It extends to congruence subgroups of $\SL_2(\O_E)$. This was further extended to arbitrary dimension by Elstrodt, Grunewald and Mennicke \cite{EGM90} and Cogdell, Li, Piatetski-Shapiro and Sarnak \cite{CLPS91} independently. Both these papers prove that if $Q$ is a quadratic form of signature $(1 , n + 1)$, $n > 1$, $Q$ isotropic over $\Q$, and $\G$ a congruence subgroup of $\SO^0_{n+2} (\Z, Q)$ then
 \begin{equation}
 \lambda_1( \H^{n+1} / \G ) \geq \frac{2n - 1}{4}.
 \end{equation}
Work of Burger and Sarnak \cite{BS91} gave further progress by allowing one to link the Laplacian spectrum of conguence hyperbolic manifolds to the automorphic spectrum of $\mathrm{GL}_2$. Together with results of Blomer and Brumley \cite{BB11} this lifting argument yields that when $\GG$ and $\G$ are as in our setup and $\G(\RR)$ is a congruence subgroup,
\begin{equation}
\lambda_1( \H^{n+1} / \G(\RR)) > \frac{25}{32}\left( n - \frac{25}{32} \right) .
\end{equation}

Recently, following the proof of the fundamental lemma by Ng\^{o} \cite{NGO10} and the weighted fundamental lemma by Chaudouard and Laumon \cite{CL1,CL2}, certain conditional results of Arthur appearing in \cite{A05} have become fact. Using these results of Arthur, Bergeron and Clozel proved in \cite{BC12} the following result on the spectrum of the Laplacian.

\begin{thm}[Bergeron, Clozel]\label{BC}
Let $\GG$ be a $\Q$-group obtained by restriction of scalars from a special orthogonal group (split or quasi-split) over a totally real number field. Additionally suppose that $\GG$ does not come from a twisted form $^{3}D_4$ or  $^{6}D_4$ and that $\GG(\R)$ is the product of $\SO(n+1 , 1)$ with a compact group. For any torsion free congruence subgroup $\G \subset \GG$, the spectrum of the Laplacian on $\H^{n+1} / \G$ is contained in the set
\begin{equation}
\bigcup_{0 \leq j < \frac{n}{2} } \{ j ( n - j) \} \cup \left[ \frac{n^2}{4} - \left( \frac{1}{2} - \frac{1}{N^2 + 1} \right)^2 , \infty \right) , 
\end{equation}
where $N = n + 1$ if $n$ is odd and $N = n+2$ if $n$ is even.
In particular, when $n \geq 3$ we have the spectral gap result
\begin{equation}
\lambda_1( \H^{n+1} / \G) \geq n - 1.
\end{equation}
\end{thm}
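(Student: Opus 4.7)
The plan is to translate the geometric spectral problem into automorphic representation theory and then apply Arthur's endoscopic classification for classical groups. By strong approximation, $L^2(\G \backslash \H^{n+1})$ embeds into $L^2( \GG(F) \backslash \GG(\mathbb{A}))^{K_\infty K_f}$ for appropriate maximal compact subgroups, and the hyperbolic Laplacian coincides (up to a constant shift) with the Casimir operator at the distinguished real place. A Laplacian eigenvalue $\lambda = s(n-s) \in [0, n^2/4)$ therefore forces the automorphic spectrum to contain a spherical complementary series representation of $\SO(n+1,1)$ of parameter $s \in (n/2, n]$; the problem is reduced to identifying which such $s$ can arise in the discrete automorphic spectrum of $\GG$.

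Next I would invoke Arthur's classification, now a theorem for quasi-split classical groups thanks to the fundamental lemma of Ng\^{o} and its weighted extension by Chaudouard--Laumon (the exclusion of the triality forms $^3D_4, {}^6D_4$ reflects the fact that Arthur's framework does not handle triality). Every discrete automorphic representation of $\GG$ is attached to a formal global parameter $\psi = \boxplus_i (\pi_i \boxtimes \nu_i)$ where each $\pi_i$ is a self-dual cuspidal automorphic representation of $\mathrm{GL}(N_i, \mathbb{A}_F)$ and $\nu_i$ is the irreducible representation of $\SL_2(\C)$ of some dimension $d_i$, subject to $\sum_i N_i d_i = N$. The archimedean Langlands parameter is then built from the $\pi_{i,\infty}$ with a supplementary twist by $|\cdot|^{(d_i-1)/2}$ coming from each $\SL_2$ factor. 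Consequently, archimedean non-temperedness splits into two sources: the Arthur $\SL_2$ (contributing half-integer twists, producing isolated eigenvalues of the form $j(n-j)$, $0 \leq j < n/2$) and failure of the Ramanujan conjecture for the cuspidal $\pi_i$ themselves.

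The final ingredient is the best known bound toward Ramanujan for $\mathrm{GL}(N)/F$: by Luo--Rudnick--Sarnak, the archimedean non-tempered exponent of a cuspidal $\mathrm{GL}(N)$ automorphic representation is at most $\tfrac{1}{2} - \tfrac{1}{N^2+1}$. Transferring this back to $\SO(n+1,1)$ through the local Arthur recipe yields exactly the continuous portion $[\, n^2/4 - (1/2 - 1/(N^2+1))^2 , \infty )$ of the asserted spectrum, while the Arthur-$\SL_2$ contributions account for the isolated eigenvalues $j(n-j)$. For $n \geq 3$ the smallest such value with $j \geq 1$ is $j(n-j)|_{j=1} = n - 1$, which already dominates the continuous bound, yielding the spectral gap $\lambda_1 \geq n - 1$.

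The main obstacle is not this translation but the scaffolding on which it rests: Arthur's classification for $\GG$, requiring the full stabilization of the twisted trace formula, together with a careful treatment of the archimedean Arthur packets of $\SO(n+1,1)$ so as to pin down precisely which $\psi$ can produce a given spherical non-tempered $\pi_\infty$. A second, more technical point is to ensure that the transfer of the Luo--Rudnick--Sarnak exponent from the general linear factors to the orthogonal side is exact rather than lossy, which is where the concrete shape $\bigl(\tfrac{1}{2} - \tfrac{1}{N^2+1}\bigr)^2$ of the bound --- and the precise value of $N$ in terms of $n$ --- ultimately enters.
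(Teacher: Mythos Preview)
The paper does not prove this theorem; it is quoted as a result of Bergeron and Clozel \cite{BC12}, made unconditional by Ng\^{o}'s fundamental lemma \cite{NGO10} and the weighted fundamental lemma of Chaudouard--Laumon \cite{CL1,CL2}, and is used only as input to the lattice point count in Lemma~\ref{lattice}. There is therefore no proof in the paper to compare your proposal against.

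That said, your sketch is a faithful high-level account of the Bergeron--Clozel argument: the passage from Laplace eigenvalues to spherical complementary series at the noncompact place, the classification of the discrete automorphic spectrum via Arthur parameters $\psi = \boxplus_i (\pi_i \boxtimes \nu_i)$, the identification of the isolated points $j(n-j)$ with the Arthur $\SL_2$ contributions, and the continuous band coming from the Luo--Rudnick--Sarnak bound $\tfrac{1}{2} - \tfrac{1}{N^2+1}$ toward Ramanujan on $\mathrm{GL}(N)$. Your remark that the exclusion of $^3D_4$ and $^6D_4$ reflects the scope of Arthur's framework is also correct. The honest caveats you flag --- the stabilized twisted trace formula and the precise description of archimedean Arthur packets for $\SO(n+1,1)$ --- are exactly where the real work in \cite{BC12} lies, and your outline correctly identifies them as the load-bearing parts rather than attempting to paper over them.
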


This result feeds our main theorem via the work of Kelmer and Silberman \cite{KS10} relating the spectral theory to the lattice point count. When $n \geq 3$ Theorem \ref{BC} gives the best possible input for our method. When $n = 2$, we use the lattice point of Sarnak and Xue \cite{SX91} which is also the best possible.
 
The proof of Theorem \ref{316}, for example, relies essentially on the underlying arithmetic of the modular group $\SL_2(\Z)$ and associated Kloosterman sums\footnote{The estimates for Kloosterman sums come from Weil \cite{WEIL48} and appeal to the Riemann hypothesis for curves (also proved by Weil). Iwaniec proved \eqref{sel1} in \cite{IWA89} using Kloosterman sums but without relying on Weil's bound. Gelbart and Jacquet \cite{GJ78} proved that $3/16$ is not attained in \eqref{sel1} by a very different method to that of Selberg.}. We will not have access to such rich arithmetic and will rely on a more robust 'almost geometric' method developed by Sarnak and Xue \cite{SX91} and extended to the two dimensional infinite volume case by Gamburd \cite{GAM02}. Sarnak and Xue proved
\begin{thm}[Sarnak, Xue] Let $F$ be a totally real number field with a fixed infinite place, and $\O_F$ the ring of integers of $F$. Let $\GG$ be an orthogonal $F$-subgroup of $\mathrm{GL}_{4}$ with $\GG(\R) \cong \SL_2(\R)$ (resp. $\SL_2(\C)$) at the fixed place and compact at the other real places. Let $\G$ be a finite index subgroup of $\GG(F) \cap \mathrm{GL}_{4}(\O_F)$ which is cocompact in $\SL_2(\R)$ (resp. $\SL_2(\C)$). Then for large enough prime ideals $\P \subset \O_F$
\begin{equation*}
\Omega(  \H^{n+1}  / \G (\P) ) \cap [0, \mu ) = \Omega(   \H^{n+1} / \G (1) ) \cap [0, \mu)
\end{equation*}  
where $n = 1$ (resp. $2$) and $\mu = 5/36$ (resp. $11/36$) in the case of $\SL_2(\R)$ (resp. $\SL_2(\C)$).
\end{thm}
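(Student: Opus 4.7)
The plan follows the Sarnak--Xue strategy of combining an analytic upper bound on the multiplicity of a small Laplace eigenvalue with a representation-theoretic lower bound that becomes available once the eigenvalue is \emph{new} relative to $\Omega(X(1))$. Write $\lambda = s(n-s)$ with $s \in (n/2, n]$, set $V(\P) := \mathrm{Vol}(X(\P))$ and $q := |\O_F / \P|$, and let $m(\lambda)$ denote the multiplicity of $\lambda$ in $\Omega(X(\P))$. Any $\lambda$-eigenfunction on $X(1)$ lifts to one on $X(\P)$, so only the absence of new eigenvalues below $\mu$ has to be ruled out.

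For the upper bound I would apply the pre-trace formula on the cocompact quotient $X(\P)$ to a positive-definite, bi-$K$-invariant test function $\Psi_R = \psi_R * \psi_R^*$, with $\psi_R$ supported in a ball of radius $R$ about the identity in $G$. Positivity of $\hat\Psi_R = |\hat\psi_R|^2$ isolates the $\lambda$-eigenspace and, after integration over $X(\P)$, yields
\begin{equation*}
m(\lambda) \cdot |\hat\psi_R(s)|^2 \;\leq\; V(\P)\, \Psi_R(e) \;+\; V(\P) \cdot \|\Psi_R\|_\infty \cdot \sup_{x} N(\G(\P), x, 2R),
\end{equation*}
where $N(\G(\P),x,T) = \#\{\gamma \in \G(\P) \setminus \{e\} : d(x,\gamma x) \le T\}$. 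The decisive input is the Sarnak--Xue lattice-point count: for nontrivial $\gamma \in \G(\P)$ the product formula (using compactness of $\GG$ at the remaining real places together with $\gamma \equiv I \bmod \P$) forces the archimedean displacement to grow at least linearly in $\log q$, giving
\begin{equation*}
N(\G(\P), x, T) \;\ll_\epsilon\; \frac{e^{nT}}{V(\P)} + e^{(n/2)T + \epsilon T}.
\end{equation*}
Feeding this in and optimizing $R$ produces $m(\lambda) \ll_\epsilon V(\P)^{\theta(s) + \epsilon}$ with the explicit linear exponent $\theta(s) = 2(n-s)/n$, interpolating between $1$ at $s = n/2$ and $0$ at $s = n$.

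For the complementary lower bound I would invoke strong approximation: $\G/\G(\P)$ surjects onto a large subgroup of $\GG(\O_F/\P)$, a finite quasi-simple group of Lie type. A new eigenvalue would produce a non-trivial representation of this group on the $\lambda$-eigenspace, and Frobenius / Landazuri--Seitz gives minimal dimension $\gg q$, whence $m(\lambda) \gg V(\P)^{1/d}$ with $d = 3$ in the $\SL_2(\R)$ case and $d = 6$ in the $\SL_2(\C)$ case (reflecting the local $\SO_4 \cong \SL_2 \times \SL_2$ structure in the latter). Comparing with the upper bound forces $\theta(s) \ge 1/d - \epsilon$ for every sufficiently large $\P$, which unpacks to $s \le 5/6$ when $n=1$ and $s \le 11/6$ when $n=2$, translating into $\lambda \ge 5/36$ and $\lambda \ge 11/36$ respectively. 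The hardest step by far is the Sarnak--Xue lattice-point count: it is the unique place where the arithmeticity of $\G(\P)$ enters essentially, and without it the trace bound would collapse to the trivial $m(\lambda) \le V(\P)$. A secondary but delicate issue is the optimization of $\psi_R$ against the lattice count, which is what pins down the exact numerical constants $5/36$ and $11/36$.
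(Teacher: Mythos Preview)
Your strategy is precisely Sarnak--Xue's and lines up with the machine the paper runs in its main proof: pre-trace inequality plus the arithmetic lattice-point count on the geometric side, minimal representation dimensions of the finite quotient on the spectral side, then play the two off against each other. The endpoints $s \le 5/6$ and $s \le 11/6$, and the values $d=3$, $d=6$, are correct, and you rightly single out the lattice count as the essential arithmetic input.

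There is, however, a genuine gap in the upper-bound step. Your displayed inequality
\[
m(\lambda)\,|\hat\psi_R(s)|^2 \;\le\; V(\P)\,\Psi_R(e) + V(\P)\,\|\Psi_R\|_\infty \,\sup_x N(\G(\P),x,2R)
\]
is too blunt to produce $\theta(s) = 2(n-s)/n$: weighting every nontrivial lattice point by $\|\Psi_R\|_\infty$ loses a full factor of $V(\P)$ and yields at best $m(\lambda) \ll V(\P)^{(3n-2s)/n}$, which is vacuous for $s \in (n/2,n]$. What Sarnak--Xue actually do---and what the paper records among its analytic preparations and then exploits in the proof of its Main Theorem---is take the specific kernel $f_s = \chi_T\,\phi_s$, $F_s = f_s * \bar f_s$, and use the \emph{pointwise decay}
\[
F_s(g) \;\ll\; e^{(2s-n)T}\, e^{-\frac{n}{2}d(o,go)} \qquad (d(o,go)\le 2T),
\]
which is then summed over $\G(\P)$ by partial summation against $N(\G(\P),t)$. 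It is precisely this Abel-summation step that allows the main term $e^{nT}/V(\P)$ of the lattice count to cancel the volume prefactor and deliver the correct exponent $2(n-s)/n$. So the ``optimization of $\psi_R$'' you flag as a secondary delicacy is in fact the crux of the analytic side; with only the sup-norm bound the argument does not close.
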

The Sarnak and Xue machine makes use of the fact that if new eigenvalues appear, they are of high multiplicity. This follows by bounding below the dimension of new irreducible representations of the factor group $\Lambda / \Lambda(\RR)$. Everything we need in this direction is contained in Section \ref{algebra}. The multiplicities feature in one side of the trace formula, and the other side of the trace formula can be related to a lattice point count by choosing the right family of automorphic kernels to trace. We introduce the necessary kernels and gather some estimates on the lattice point count and spherical functions in Section \ref{analysis}. 

In the cocompact case the lattice point count plays against the multiplicity estimate via the trace formula to give a contradiction when new eigenvalues appear in a certain range. This is the approach of Sarnak and Xue. However, in the infinite volume case the trace formula does not hold as is, and must be reinterpreted as an inequality. Further repairs are needed and these were made by Gamburd in the two dimensional ($\H^2$) case to prove in \cite{GAM02}
\begin{thm}[Gamburd] Let $\Lambda = \langle A_1 , \ldots , A_k \rangle$ be a finitely generated subgroup of $\SL_2(\Z)$ with $\delta > 5/6$. Let $X(p) = \H^2 / \Lambda(p)$. For $p$ large enough
\begin{equation*}
\Omega(X(p)) \cap [\delta(1-\delta) ,  5/36 ) = \Omega(X(1)) \cap [\delta(1-\delta) , 5/36) .
\end{equation*}  
\end{thm}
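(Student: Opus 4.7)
The strategy is by contradiction: assume that for arbitrarily large primes $p$ there is a \emph{new} eigenvalue $\lambda \in [\delta(1-\delta), 5/36)$ of $\Delta_{X(p)}$, meaning $\lambda \notin \Omega(X(1))$, and write $\lambda = s(1-s)$ with $s \in (5/6, \delta]$. I will play a multiplicity lower bound against an upper bound coming from a pre-trace inequality and a lattice point count. By the strong approximation/Weisfeiler discussion referenced earlier, for all but finitely many $p$ the image of $\Lambda$ in $\mathrm{SL}_2(\mathbb{F}_p)$ is onto, so $\Lambda/\Lambda(p)\cong \mathrm{SL}_2(\mathbb{F}_p)$. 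The $\lambda$-eigenspace in $L^2(X(p))$ is a representation of this group; since $\lambda$ is new it contains no trivial isotypic component, hence decomposes into non-trivial irreducibles. As the smallest such irreducible has dimension $(p-1)/2$, the multiplicity satisfies $m(\lambda) \gtrsim p$.

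For the upper bound, choose a non-negative radial test function $k_T$ on $\H^2$ supported in a ball of radius $T$, whose spherical transform $\hat k_T$ is non-negative and satisfies $\hat k_T(s')\gtrsim e^{(2s'-1)T}$ uniformly on the complementary-series window. The automorphic kernel $K_T(x,y)=\sum_{\gamma\in \Lambda(p)} k_T(d(x,\gamma y))$ admits the pointwise pre-trace lower bound
\begin{equation*}
K_T(x,x)\;\geq\;\sum_j \hat k_T(s_j)\,|\phi_j(x)|^2,
\end{equation*}
summed over the discrete spectrum. Integrating over the lift $C\subset X(p)$ of the convex core of $X$ and invoking the Sullivan--Lax--Phillips description to secure a uniform lower bound $\int_C|\phi_j|^2\gtrsim 1$ for the complementary-series eigenfunctions (which are essentially concentrated on the convex core, of finite volume), one arrives at
\begin{equation*}
m(\lambda)\,e^{(2s-1)T}\;\lesssim\;\sup_{x\in C}\#\{\gamma\in \Lambda(p):d(x,\gamma x)\leq T\}.
\end{equation*}

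The right-hand side is controlled via Sullivan's lattice point count $\#\{\gamma\in\Lambda:d(o,\gamma o)\leq R\}\lesssim e^{\delta R}$, redistributed across the $[\Lambda:\Lambda(p)]\asymp p^3$ cosets of $\Lambda(p)$ in $\Lambda$ by a Cauchy--Schwarz-style argument at scale $2T$, producing the Gamburd-type bound
\begin{equation*}
\#\{\gamma\in \Lambda(p):d(x,\gamma x)\leq T\}\;\lesssim\;\frac{e^{2\delta T}}{p^3}+e^{\delta T}.
\end{equation*}
Combining with the multiplicity estimate gives $p\cdot e^{(2s-1)T}\lesssim e^{2\delta T}/p^3+e^{\delta T}$. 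Setting $T=(3/\delta)\log p$ to balance the two geometric terms converts this into $p^{\,1+3(2s-1)/\delta}\lesssim p^3$, which fails for large $p$ exactly when $s>1/2+\delta/3$. Since $\delta\leq 1$, this is implied by our standing $s>5/6$, completing the contradiction.

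The principal obstacle is the pre-trace step. In the cocompact Sarnak--Xue setting one simply integrates the automorphic kernel over all of $X(p)$ using the genuine Selberg trace formula; here $X(p)$ has infinite volume and that global integral diverges. Replacing it by an integral over the finite-volume convex core is benign on the geometric side but requires quantitative, level-uniform control of the $L^2$-mass on the core of every new complementary-series eigenfunction, across the whole gap window $(5/6,\delta]$. Establishing this uniform concentration via Sullivan--Lax--Phillips is the decisive technical ingredient specific to the thin, infinite-volume case, and is where Gamburd's argument departs from Sarnak--Xue.
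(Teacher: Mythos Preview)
Your overall architecture is right and you have correctly located the genuinely new difficulty of the infinite-volume case: the uniform-in-level $L^2$ concentration of complementary-series eigenfunctions on a fixed compact part (what the paper calls the Collar Lemma, Lemma \ref{collar} here, and what Gamburd proves directly in \cite{GAM02}). But two of the quantitative steps are wrong, and they happen to cancel to give the correct numerology.

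First, the displayed pretrace inequality
\[
m(\lambda)\,e^{(2s-1)T}\;\lesssim\;\sup_{x\in C}\#\{\gamma\in\Lambda(p):d(x,\gamma x)\le T\}
\]
is missing a factor $[\Lambda:\Lambda(p)]\asymp p^{3}$. You integrate $K_T(x,x)$ over the \emph{lift} $C=\pi_p^{-1}(\mathrm{core}(X))$, whose volume is $\asymp p^{3}$ times that of the core of $X$; bounding by the sup then produces $p^{3}\sup_x(\cdots)$ on the right. Compare the paper's computation at the start of Section \ref{proof}, where the sum over $\Lambda/\Lambda(\RR)$ contributes exactly this factor.

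Second, the lattice-point bound you quote,
\[
\#\{\gamma\in\Lambda(p):d(x,\gamma x)\le T\}\;\lesssim\;\frac{e^{2\delta T}}{p^{3}}+e^{\delta T},
\]
is not obtainable from Sullivan's count $N(\Lambda,R)\lesssim e^{\delta R}$ by any Cauchy--Schwarz redistribution over cosets: that argument bounds $\sum_l N_l(T)^2$ from \emph{below}, hence gives a \emph{lower} bound on $N(\Lambda(p),2T)$, never an upper bound. At your chosen $T=(3/\delta)\log p$ your claimed bound equals $2p^{3}$, which is just the trivial Sullivan bound $e^{\delta T}$; once the missing $p^{3}$ is reinstated, this yields no contradiction in the range $s\in(5/6,\delta]$. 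In Gamburd's proof (and in Section \ref{proof} here) the $p$-saving does not come from $\Lambda$ at all: one uses the crude inclusion $\Lambda(p)\subset\Gamma(p)$ with $\Gamma=\SL_2(\Z)$ and then the Sarnak--Xue \emph{arithmetic} count $N(\Gamma(p),T)\ll_\e e^{T(1+\e)}/p^{3}+e^{T/2}$. Combined with the convolution kernel $F_s=f_s*\bar f_s$ of Section \ref{analysis} (which gives $\hat F_s(\lambda)\gg e^{(4s-2)T}$ on the spectral side together with the decay $|F_s(g)|\ll e^{(2s-1)T}e^{-d/2}$ on the geometric side), the balance $e^{T}\asymp p^{3}$ then yields the threshold $s\le 5/6$.
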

The 'Collar Lemmas' in \cite{GAM02} form a key part of the generalization of Sarnak and Xue's method to infinite volume. Roughly speaking,  these state that eigenfunctions corresponding to eigenvalues $< 1/4$ in $X(p)$ are uniformly bounded through $p$ away from concentrating near infinity. The methods used to prove these do not obviously generalize to higher dimensions. The needed generalization is the thrust of this paper and appears in Section \ref{collarsection} as Lemma \ref{collar} along with the prerequisite geometry. 

The proof of Lemma \ref{collar} has a nice heuristic as follows. If one considers classical motion of a particle on a line under a step potential of height $V_0$, and the conserved energy $E$ is $< V_0$, then the particle will never enter the region covered by the step. This is due to $E = K + V$ and $K \geq 0$. In the quantum mechanical version of the same system it is no longer true that the stationary wave function is zero inside the step (quantum tunnelling). However provided $E < V_0$ is bounded away from $V_0$ we should get uniform bounds through $E$ which say the wave function cannot be arbitrarily concentrated inside the step. The positive Laplacian $\Delta_{X(\RR)}$ plays the role of a Schr\"{o}dinger operator for free dynamics on $X(\RR)$, the eigenvalues of $\Delta_{X(\RR)}$ corresponding to energy levels. We seek uniform bounds on eigenfunctions with eigenvalues bounded away from and less than $n^2/4$, which can be thought of as the escape energy, so that these eigenfunctions are bound states. Given that their energy is uniformly bounded through $\RR$ away from escape, they should not concentrate near infinity. Making formal sense of this argument constitutes the bulk of the proof.

All the machinery is brought together in Section \ref{proof} to prove the following.
\begin{thm}[Main Theorem]\label{maintheorem} Let $F$ be a totally real number field with a fixed infinite place, and $\O_F$ the ring of integers of $F$. Let $\GG = \SO(F^{n+2} , q)$ the closed $F$-subgroup of $\mathrm{GL}_{n+2}$ which preserves a quadratic form $q$ defined over $F$. Assume $\GG(\R) \cong \SO(n+1,1)$ at the fixed place and compact at the other real places. Let $\Lambda$ be a subgroup of $\GG(F) \cap \mathrm{GL}_{n+2}(\O_F)$ with the following properties.
\begin{description}
\item[Algebraic Fullness] $\Lambda$ is Zariski-dense in $\GG(\C)$ and the traces of $\Lambda$ generate $\O_F$.
\item[Geometric Regularity] The image of $\Lambda$ at the fixed place is geometrically finite, orientation preserving and torsion free as an isometry group of $\H^{n+1}$.
\item[Fractal Fullness] The Hausdorff dimension $\delta$ of the limit set of $\Lambda$ is greater than $s^0_n$, defined  
\begin{equation}
s^0_n \equiv n - \frac{2(n-1)}{(n+1)(n+2)}.
\end{equation}
\end{description}
Let $X(\RR) = \H^{n+1} / \Lambda(\RR)$ for $\RR$ an ideal in $\O_F$. Then by replacing $\Lambda$ with a finite index subgroup if necessary we have for $|\O_F / \RR|$ large enough and $\RR$ coprime to a finite set of primes
\begin{equation}
\Omega(X(\RR)) \cap \left[\delta(n-\delta) , s^0_n(n -s^0_n) \right) = \Omega(X(1)) \cap \left[\delta(n -  \delta) ,  s^0_n(n -s^0_n)\right).
\end{equation}
\end{thm}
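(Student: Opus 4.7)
The plan is to derive a contradiction from the hypothesis that some new eigenvalue $\lambda = s(n-s)$ with $s \in (s_n^0, \delta]$ appears in $\Omega(X(\RR))$. The argument adapts the Sarnak--Xue framework to the infinite-volume setting following Gamburd, replacing the trace equality with a one-sided trace estimate. The first ingredient is a multiplicity lower bound: because $\lambda \notin \Omega(X(1))$, the corresponding eigenspace $V_\lambda \subset L^2(X(\RR))$ carries a representation of $\Lambda/\Lambda(\RR)$ with no trivial subrepresentation, and by the strong approximation statement of Section~\ref{algebra} this group contains $(\GG/\GG(\RR))'$. Quasirandomness of finite groups of Lie type forces every nontrivial irreducible of this group to have dimension $\gg |\O_F/\RR|^{\alpha_n}$ for an explicit exponent $\alpha_n > 0$, so $\dim V_\lambda \gg |\O_F/\RR|^{\alpha_n}$.

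Next I would construct automorphic kernels amplifying $\lambda$. Using the Harish-Chandra spherical transform I pick a $K$-biinvariant bump $h_T$ on $\SO(n+1,1)$ supported in a ball of radius $\asymp T$ with positive spherical transform satisfying $\hat{h}_T(s) \gg e^{sT}$, and set
\[
K_T(x,y) \;=\; \sum_{\gamma \in \Lambda(\RR)} h_T(x^{-1}\gamma y).
\]
This is the kernel of a convolution operator acting as multiplication by $\hat{h}_T(s_i)$ on each eigenspace with parameter $s_i$. Infinite volume obstructs a genuine trace formula, so Lemma~\ref{collar} enters here: it provides a compact $Y \subset X(\RR)$ on which every new eigenfunction with $\lambda < s_n^0(n-s_n^0)$ retains a definite proportion of its $L^2$-mass, uniformly in $\RR$. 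Restricting to $Y$ and summing a Bessel-type inequality over an orthonormal basis of $V_\lambda$ yields
\[
\dim V_\lambda \cdot |\hat{h}_T(s)|^2 \;\ll\; \iint_{Y\times Y} |K_T(x,y)|^2 \, dx\, dy,
\]
whose right-hand side is controlled by counting pairs of $\Lambda(\RR)$-lattice points at displacement $\leq O(T)$.

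I would then bound the geometric side via the lattice count of Kelmer--Silberman, which, using the spectral input from Theorem~\ref{BC} when $n \geq 3$ and Sarnak--Xue when $n=2$, gives
\[
\#\bigl(\Lambda(\RR) \cap B_T\bigr) \;\ll\; \frac{e^{\delta T}}{|\O_F/\RR|^{\beta_n}} + e^{\kappa_n T},
\]
with exponents $\beta_n$ and $\kappa_n$ explicit in $n$. Combining this with the multiplicity bound gives
\[
|\O_F/\RR|^{\alpha_n} \, e^{2sT} \;\ll\; \frac{e^{2\delta T}}{|\O_F/\RR|^{\beta_n}} + e^{(\delta + \kappa_n)T}.
\]
Optimizing $T$ so that the two right-hand terms balance and doing the exponent bookkeeping, the break-even point lands precisely at $s = s_n^0 = n - 2(n-1)/((n+1)(n+2))$; indeed, for $n=2$ one recovers $s_n^0(n-s_n^0) = 11/36$, matching the Sarnak--Xue threshold in the cocompact $\SL_2(\C)$ case. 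Any $s$ strictly above $s_n^0$ therefore contradicts the inequality once $|\O_F/\RR|$ is large enough (outside the finite exceptional set of primes from Section~\ref{algebra}), proving the theorem.

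The main obstacle is the third step, namely the infinite-volume localization encoded in Lemma~\ref{collar}. The other ingredients are essentially imported: strong approximation and representation-dimension bounds come from Section~\ref{algebra}, spherical analysis is classical, and the lattice counts are cited directly. What is genuinely delicate is proving that eigenfunctions with $\lambda$ bounded uniformly away from $n^2/4$ cannot concentrate in the non-compact ends of $X(\RR)$, with bounds independent of the level $\RR$. The Schr\"odinger-tunnelling heuristic explained in the introduction makes this plausible, but its formal implementation requires geometric control of the resolvent or Green's function in horoball regions that does not follow from Gamburd's two-dimensional arguments and must be developed afresh in Section~\ref{collarsection}.
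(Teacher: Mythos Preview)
Your overall architecture is right and mirrors Section~\ref{proof}: pre-trace inequality, Lemma~\ref{collar} for localization to a compact core, multiplicity lower bound from Lemma~\ref{generalfinite}, lattice count from Lemma~\ref{lattice}, then balance $T$ against $|\O_F/\RR|$. The Bessel/Hilbert--Schmidt phrasing is essentially equivalent to the paper's diagonal trace once one takes $F_s=f_s*\overline{f_s}$ and integrates $K_\RR(x,x)$ over $\pi_\RR^{-1}(\K)$, dropping the nonnegative continuous-spectrum remainder.

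There is, however, a genuine gap in your lattice-count step. You write
\[
\#\bigl(\Lambda(\RR)\cap B_T\bigr)\;\ll\;\frac{e^{\delta T}}{|\O_F/\RR|^{\beta_n}}+e^{\kappa_n T},
\]
but no such bound with congruence savings exists for the \emph{infinite-covolume} group $\Lambda(\RR)$; proving one would be essentially equivalent to the spectral gap you are after. Kelmer--Silberman's result applies to congruence subgroups $\Gamma(\RR)$ of the ambient arithmetic \emph{lattice} and has main term $e^{nT}$, not $e^{\delta T}$. The paper's repair is the deliberately crude inclusion $\Lambda(\RR)\subset\Gamma(\RR)$: one accepts the wasteful exponent $n$ in exchange for the known congruence savings. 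Combined with the unfolding $\int_{\pi_\RR^{-1}(\K)}=\sum_{l\in\Lambda/\Lambda(\RR)}\int_\K$, which contributes a factor $\approx|\O_F/\RR|^{(n+1)(n+2)/2}$ on the geometric side, one arrives at
\[
e^{(2s-n)T}\,|\O_F/\RR|^{\,n-1}\;\ll_\e\;e^{nT(1+\e)}+|\O_F/\RR|^{(n+1)(n+2)/2}\,e^{(n-2)T}.
\]
The structure differs from your displayed inequality: the $|\O_F/\RR|$-power sits as a \emph{cost} on the secondary term (from the unfolding), not as savings on the main term, and the exponents are governed by $n$ rather than $\delta$. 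Balancing at $T\approx\tfrac{(n+1)(n+2)}{4}\log|\O_F/\RR|$ then yields $s\le s_n^0$. Your exponent bookkeeping as written will not land on $s_n^0$ without this correction.
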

The finite index subgroup of $\Lambda$ which we may need to pass to is the $\Lambda_1$ of Section \ref{algebra}. This is not necessary if one assumes that $\Lambda$ is contained in the kernel of the appropriate spinor norm. The index of this subgroup has independent bounds which depend on the number of generators of $\Lambda$ and the structure of a localization\footnote{This localization is given by strong approximation.} $(\O_F)_S$ respectively.
\begin{coro}[Main Corollary]\label{maincorollary}
For $\Lambda$ as before (replace $\Lambda$ with the finite index spinor norm kernel if necessary) and $\delta > s^0_n$, $\Omega(X(\RR))$ has a spectral gap. That is to say, writing $\lambda_1(X(\RR))$ for the second smallest eigenvalue of $\Delta_{X(\RR)}$, for $|\O_F / \RR|$ large and $\RR$ coprime to a finite set of primes
\begin{equation*}
\lambda_1(X(\RR)) \geq \min \left( \lambda_1(X) , s^0_n(n -s^0_n) \right).
\end{equation*}
\end{coro}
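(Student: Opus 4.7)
The plan is to deduce Corollary \ref{maincorollary} almost mechanically from Theorem \ref{maintheorem} together with the Sullivan--Lax--Phillips description of the bottom of the spectrum. The core observation is that Theorem \ref{maintheorem} already identifies $\Omega(X(\RR))$ with $\Omega(X)$ on the window $[\delta(n-\delta), s^0_n(n-s^0_n))$, so the corollary only needs to be repackaged as a statement about the spectral gap above $\lambda_0$.

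First I would invoke the Sullivan--Lax--Phillips Theorem to conclude that the bottom eigenvalue $\lambda_0 = \delta(n-\delta)$ is simple for both $\Delta_X$ and $\Delta_{X(\RR)}$, and that in $[\lambda_0, n^2/4)$ each spectrum is purely discrete and finite. In particular $\lambda_1(X(\RR))$, the next $L^2$ eigenvalue above $\lambda_0$, exists and is strictly greater than $\lambda_0$.

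The heart of the argument is a short case split on whether $\lambda_1(X(\RR)) < s^0_n(n-s^0_n)$ or not. If $\lambda_1(X(\RR)) \geq s^0_n(n-s^0_n)$, there is nothing to show. Otherwise $\lambda_1(X(\RR)) \in (\lambda_0, s^0_n(n-s^0_n))$, and Theorem \ref{maintheorem} forces this value to lie in $\Omega(X)$; since it is strictly greater than $\lambda_0(X) = \lambda_0$, it is by definition at least $\lambda_1(X)$. Taking the minimum over the two cases yields $\lambda_1(X(\RR)) \geq \min(\lambda_1(X), s^0_n(n-s^0_n))$, as required.

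I do not expect a genuine obstacle here: all the analytic and arithmetic content is already carried by Theorem \ref{maintheorem}. The only bookkeeping to attend to is the degenerate case in which $\Delta_X$ has no discrete eigenvalues strictly between $\lambda_0$ and $n^2/4$, so that one wants to interpret $\lambda_1(X)$ as the bottom $n^2/4$ of the essential spectrum. Because $s^0_n > n/2$ forces $s^0_n(n-s^0_n) < n^2/4$, the minimum then simply reduces to $s^0_n(n-s^0_n)$, and the first case of the split already delivers the corresponding bound, so the statement is unaffected.
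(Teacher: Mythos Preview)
Your argument is correct and matches the paper's own treatment: the paper simply asserts that ``Corollary \ref{maincorollary} follows directly'' from Theorem \ref{maintheorem}, and your case split on whether $\lambda_1(X(\RR))$ lies below $s^0_n(n-s^0_n)$ is exactly the natural way to unpack that claim. Your handling of the degenerate case where $X$ has no discrete eigenvalue in $(\lambda_0, n^2/4)$ is also appropriate.
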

Some remarks are due before we mention applications. When $\Lambda = \G$ we obtain the result of Clozel \cite{CL03} on property $(\tau)$, albeit with a weaker bound. For any arithmetic lattice the Borel Density Theorem \cite{B60} implies Zariski-density, so our result also applies (when our geometric criteria are met). In this case we obtain the current best bound for an arbitrary arithmetic lattice in $\SO(n+1,1)$. This is due to a somewhat trivial tightening in Lemma \ref{KS} of the arguments in Kelmer and Silberman \cite{KS10} together with a nontrivial improvement in Lemma \ref{newbound}. When $\Lambda$ is of infinite index our result is entirely new. We show that our result is not vacuous in this case by constructing eligible $\Lambda$ in Section \ref{construct}.

One important application of the spectral gap is the Bourgain-Gamburd-Sarnak affine linear sieve introduced in \cite{BGS06}, \cite{BGS10}, which we recall now. 
\begin{thm}[Bourgain, Gamburd, Sarnak] Let $G \subset \mathrm{GL_n}$ be a connected, simply connected, absolutely simple algebraic group defined over $\Q$. Let $f \in \Q[G]$ be a non-zero non-unit with $t$ irreducible factors in $\Q[G]$. Let $\Lambda$ be a subgroup of $G(\Q) \cap \mathrm{GL}_n(\Z)$ finitely generated by a set $S$. We suppose the pair $(\Lambda, f)$ has the following properties.
\begin{description}
\item[Algebraic Fullness] $\Lambda$ is Zariski-dense in $G$.
\item[No local congruence obstructions]  For all integer $q \geq 2$ there exists $x \in \Lambda$ with $(f(x) , q ) = 1$.
\item[Square free Expansion] As $q$ runs through square free integers the Cayley graphs $\mathcal{H}( S \Lambda(q) , \Lambda / \Lambda(q) )$ form an expander family.
\end{description}
Then there exists $r$ such that the set of $x \in \Lambda$ such that $f(x)$ has at most $r$ prime factors is Zariski-dense in $G$. Moreover the minimal such $r$ is bounded explicitly and effectively in terms of the spectral gap in the expander family.
\end{thm}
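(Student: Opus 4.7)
The plan is to run a combinatorial sieve (the Rosser--Iwaniec $\beta$-sieve) on the orbit $\Lambda$, with the level of distribution powered by the squarefree expansion hypothesis. Fix a counting norm on $\Lambda$ (say, word length with respect to $S$) and let $N(T)$ be the number of $\gamma \in \Lambda$ of length at most $T$. The central object is the sifted sum
\begin{equation}
S(T, z) = \bigl|\{\gamma \in \Lambda : \|\gamma\| \leq T,\ (f(\gamma), P(z)) = 1\}\bigr|, \qquad P(z) = \prod_{p \leq z} p.
\end{equation}
A positive lower bound $S(T, T^{\alpha}) > 0$ for some fixed $\alpha > 0$ immediately produces $\gamma \in \Lambda$ with $f(\gamma)$ having at most $r = \lfloor 1/\alpha \rfloor$ prime factors, so the game is to make $S(T, T^\alpha)$ provably positive and, in fact, of positive proportion.

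The first technical step is to upgrade the expansion hypothesis to a level-of-distribution estimate with power saving in $T$. Precisely, for every squarefree $q$ and every coset $a\Lambda(q) \in \Lambda/\Lambda(q)$ one seeks
\begin{equation}
\bigl|\{\gamma \in \Lambda : \|\gamma\| \leq T,\ \gamma \equiv a \bmod q\}\bigr| = \frac{N(T)}{|\Lambda/\Lambda(q)|} + O\bigl(q^A \, N(T)^{1 - \eta}\bigr)
\end{equation}
for some fixed $A, \eta > 0$. One spectrally decomposes the indicator of the coset $a\Lambda(q)$ in $\ell^2(\Lambda/\Lambda(q))$; the trivial eigenspace yields the main term, while off-trivial contributions are bounded using the expansion constant applied to high iterates of the simple random walk on the Cayley graph $\mathcal{H}(S\Lambda(q), \Lambda/\Lambda(q))$. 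The polynomial dependence on $q$ absorbs the group order together with dimension bounds on its irreducible representations.

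With this equidistribution in hand, set $\beta(d) = |\{a \in \Lambda/\Lambda(d) : f(a) \equiv 0 \bmod d\}|/|\Lambda/\Lambda(d)|$. This is multiplicative on squarefree arguments by strong approximation (available because $G$ is simply connected, after a controlled finite-index passage) and satisfies $\beta(p) < 1$ by the no-local-obstructions hypothesis. Lang--Weil type estimates for $\{f = 0\}$ over $\mathbb{F}_p$ verify the sieve dimension axiom $\sum_{p \leq z} \beta(p) \log p = t \log z + O(1)$, with $t$ the number of irreducible factors of $f$. Feeding these inputs together with level of distribution $D = T^{2\eta/(A+1)}$ into the $\beta$-sieve yields $S(T, D^{1/s}) \gg N(T)/(\log T)^t$ for $s$ depending on $\eta$, which produces the claimed $r$ explicitly in terms of the spectral gap.

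To promote the resulting set from merely non-empty to Zariski-dense one invokes the Eskin--Mozes--Oh escape-from-subvarieties principle: for any proper Zariski-closed $V \subsetneq G$ one has $|\{\gamma \in \Lambda : \|\gamma\| \leq T\} \cap V| = O(N(T)^{1-\sigma})$ with $\sigma > 0$ depending on $V$. Since the sifted sum dominates this contribution for every such $V$, the $\gamma$ counted by $S(T, T^\alpha)$ cannot all lie in any proper subvariety, so Zariski density of the almost-prime set follows. The main obstacle throughout is the level-of-distribution estimate: squeezing a genuine power saving in $T$ out of the spectral gap, uniformly in $q$, is the heart of the argument, and carefully tracking $\eta$ through the $\beta$-sieve weights is what makes the dependence of $r$ on the spectral gap explicit and effective.
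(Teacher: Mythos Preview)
The paper does not prove this theorem; it merely quotes it from \cite[Theorem 1.6]{BGS10} as motivation for why an explicit spectral gap is desirable, and immediately moves on. So there is no ``paper's own proof'' to compare against.

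That said, your outline is a faithful sketch of the argument actually given by Bourgain, Gamburd, and Sarnak in the cited reference: one runs a combinatorial sieve on the orbit, the squarefree expansion hypothesis is converted via the spectral decomposition of $\ell^2(\Lambda/\Lambda(q))$ into an equidistribution-in-cosets statement with power saving (this is the level of distribution), strong approximation plus Lang--Weil give the multiplicativity and dimension axioms for the density $\beta$, and escape from subvarieties upgrades positivity of the sifted count to Zariski density. The dependence of $r$ on the gap enters exactly where you say, through the exponent $\eta$ in the error term and the choice of sieve level $D$. Nothing in your sketch is wrong at the level of a plan; it simply reproduces the cited source rather than anything in the present paper.
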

This result is given in \cite[Theorem 1.6]{BGS10}. Using tools of additive combinatorics Bourgain and Gamburd \cite{BG08} established the expansion property for Zariski-dense $\Lambda \subset \mathrm{SL}_2(\Z)$ through prime levels. In \cite{BGS09} the expansion property in $\mathrm{SL}_2(\Z)$ for Zariski-dense $\Lambda$ is proved for square free levels, and an equivalence between expansion in Cayley graphs and the spectral gap for the spaces $\H^2 / \Lambda(q)$ is given. In case $\delta(\Lambda)$ is $\leq 1/2$ there is no discrete $L^2$ spectrum and the gap has to be interpreted as a pole free region of the meromorphically continued resolvent $(\Delta_{\H^2 / \Lambda(q)} - s(1-s))^{-1}$. Affine sieve methods are used to sharply estimate the quantity
\begin{equation*}
| \{ x \in \Lambda \: : \: |x| \leq T , \: \text{all the irreducible factors of $f$ have prime evaluation at $x$} \}|
\end{equation*}
using the non-explicit gap. Furthermore (still in \cite{BGS09}) it is shown that there is an $r$ such that 
\begin{equation} 
|\{ x \in \Lambda \: : \: |x| \leq T , \: f(x) \text{ has at most $r$ prime factors} \} | 
\end{equation}
has a good bound below, in particular implying that $f(\Lambda)$ contains infinitely many $r$-almost primes, that is, numbers which are products of at most $r$ primes. This $r$ can be determined explicitly using either the value of the $L^2$ spectral gap or the size of the pole free region of the (continued) resolvent (corresponding to $\d > 1/2$ and $\delta \leq 1/2$ respectively). 

The explicit gap is utilized in the paper of Kontorovich \cite{KON09}. There the affine sieve theory is applied (with the necessary adaptations) to the function 
\begin{equation*}
f(c,d) = c^2 + d^2
\end{equation*}
and the orbit $\O = (0,1)\G$, for $\G$ an infinite index, Zariski-dense, finitely generated subgroup of $\SL_2(\Z)$ with $\delta(\G) > 149/150$ and containing parabolics. Then using Gamburd's explicit $5/6$ gap from \cite{GAM02}, it is proved that $f(\O)$ contains infinitely many 25-almost primes. Similar methods are applied (in particular also using Gamburd's 5/6 gap) by Kontorovich and Oh \cite{KH10} to the Pythagorean orbit $\O = (3,4,5)\G$ for $\G$ a finitely generated Zariski-dense subgroup of $\SO_Q(\Z)$, 
\begin{equation*}
Q(\mathbf{x}) = x^2 + y^2 - z^2.
\end{equation*}
They consider hypotenuse $(F(\mathbf{x}) = z)$, area ($F(\mathbf{x}) =xy/12$) and product ($F(\mathbf{x}) = xyz/60$) functions. The affine linear sieve gives infinitely many $R$-almost primes in $F(\O)$ for explicit $R$ provided $\delta(\G)$ is large and there are no local congruence obstructions for the pair $(\O , F)$ ($R$ and $\delta$ depend on the function $F$ considered).

Theorem \ref{maintheorem} will yield similar applications via the affine linear sieve.

\subsection{Acknowledgments} 
I thank my advisor Alex Gamburd for sharing his work with me, and for being a source of inspiration and support. Thanks also go to Martin Weissman and Jie Qing for patient and encouraging conversations. I am grateful to Professor Sarnak for valuable feedback on a draft of this paper. The construction which features in Section \ref{construct} is due to Professor McMullen. I also thank Lior Silberman for explaining his work to me. The referee's comments have been very useful, in particular the pointer to the work of Bergeron and Clozel, and I would like to thank them for their careful reading of the manuscript.

\section{Algebra}\label{algebra}
\subsection{Notation}
Throughout this paper we use $f \ll g$ to mean that $f \leq C g$ for some constant $C$ and $f \approx g$ to mean that $g \ll f \ll g$. If subscripts are present, e.g. $\ll_\e$, this indicates that the implied constant depends on the subscripts. We view the number field $F$ and the groups $\GG$ and $\Lambda$ as fixed throughout, so all our implied constants possibly depend on these objects.

\subsection{Strong Approximation and reduction to prime powers} The aim of this section is to control the factor groups $\Lambda / \Lambda(\RR)$ and their representation theory. The group $\Lambda / \Lambda(\RR)$ is naturally a subgroup of $\G / \G(\RR)$. In \cite{GAM02} the case $n=1$ is treated at prime levels. In this case enough is known about the maximal subgroups of $\SL_2(\mathbb{F}_p)$ to provide an ad hoc proof that outside of finitely many primes $p$, $\Lambda / \Lambda(p) \cong \SL_2(\mathbb{F}_p)$. The needed bound on the nontrivial representations is that of Frobenius. 

For us the description of the factor groups will follow from the work of Weisfeiler \cite{WEI84}. This describes strong approximation for Zariski-dense subgroups of algebraic groups which amongst other things are \textit{simply connected}. We therefore need to carefully deal with the fact that $\SO$ is covered by $\Spin$. The bounds on representations at prime levels are due to Seitz and Zalesskii \cite{SZ93} and at the general level we use a result of Kelmer and Silberman \cite{KS10} together with an improvement of our own.

Recall that $F$ is a totally real number field with a fixed infinite place, and $\GG = \SO(F^{n+2} , q)$ is the closed $F$-subgroup of $\mathrm{GL}_{n+2}$ which preserves a quadratic form $q$ defined over $F$. We assume $\GG(\R) \cong \SO (n+1,1)$ at the fixed place and is compact at the other real places. The ring of integers of $F$ is $\O_F$ and $\Gamma = \mathcal{G}(F) \cap \mathrm{GL}_{n+2}(\O_F)$. We drop the dependence on $F$ and simply write $\O = \O_F$ in this section. Then we take $\Lambda$ a subgroup of $\G$ which is Zariski-dense in $\GG(\C)$ and such that the traces of $\Lambda$ in the adjoint representation generate $\O_F$. The congruence subgroups $\Gamma(\RR)$, $\Lambda(\RR)$ are kernels of the reduction map mod $\RR$ an ideal in $\O$.

Compactness of $\GG(\R_v)$ away from the fixed place implies discreteness of $\G$ at the fixed place, by using the isomorphism $\GG(\R) \cong \SO (n+1,1)$ we therefore realize $\G$ and $\Lambda$ as discrete isometry groups of $n+1$ dimensional hyperbolic space $\H^{n+1}$. We assume that $\Lambda$ is geometrically finite, hence finitely generated.

As in Weisfeiler \cite[Theorem 1.1]{WEI84}  there exists a finite set of primes $S$ such that $\GG$ can be given the structure of a group scheme over the localization $\O_S$ of $\O$ away from $S$ and $\Lambda$ is contained in $\GG_{\O_S}(\O_S)$. This results from 'clearing denominators' in the definition of $q$, we take $S$ as in  \cite[Theorem 1.1]{WEI84}. We have then an 'orthogonal $\O_S$-module' in the sense of Bass \cite{B74} by equipping $P = \O_S^{n+2}$ with $q : \O_S^{n+2} \to \O_S$, and $\GG = \SO(P , q)$ as a group scheme over $\O_S$. Following Bass \cite{B74} there is a short exact sequence of group schemes (suppressing $q$)
\begin{equation}
1 \to \mu_2 \to \mathrm{Spin} \to \GG_{\O_S} \to 1 
\end{equation}
which is exact in the \textit{fppf}\footnote{faithfully flat and finitely presented} topology on $\mathrm{Spec}(\O_S)$.

This yields the sequence in cohomology
\begin{equation}\label{eq:sequence}
1 \to \mu_2(\O_S) \to \mathrm{Spin}(\O_S) \to \GG_{\O_S}(\O_S) \to^\sigma \mathrm{H}^1(\mathrm{Spec}(\O_S) , \mu_2) .
\end{equation} 
 There is an isomorphism $H^1(\mathrm{Spec}(\O_S), \mu_2) \cong \mathrm{Discr}(\O_S)$ which converts $\sigma$ into the spinor norm $SN$. The discriminant group $\mathrm{Discr}(\O_S)$ fits into the exact sequence
 \begin{equation}
0 \to \mu_2(\O_S) \to \O_S^* \to^2 \O_S^* \to \mathrm{Discr}(\O_S) \to \mathrm{Pic}(\O_S) \to^2 \mathrm{Pic}(\O_S),
\end{equation}
which implies that $\mathrm{Discr}(\O_S)$ is a finite abelian group of exponent 2.

We let $\tilde{\Lambda}$ denote the preimage of $\Lambda$ in $\mathrm{Spin}(\O_S)$. The Strong Approximation theorem of Weisfeiler \cite{WEI84} then states that there is a finite index subgroup $\tilde{\Lambda}_0$ of $\tilde{\Lambda}$ such that the image of $\tilde{\Lambda}_0$ is dense in the group $\mathrm{Spin}(\hat{\O_S})$, where $\hat{\O_S}$ is the profinite completion of $\O_S$. In particular for $\RR$ avoiding $S$  the reduction map
\begin{equation*}
\tilde{\Lambda}_0 \to \Spin( \O_S / \RR ) \cong \Spin( \O / \RR )
\end{equation*}
is onto.

By appealing to commutativity of the diagram obtained by reducing the sequence \eqref{eq:sequence} modulo $\RR$ we get that
\begin{equation}
\Lambda / \Lambda(\RR) \supseteq \mathrm{Image}(\phi_{\RR})  = \ker \left( SN_{\RR} : \GG_{O_S}(\O / \RR) \to \mathrm{Disc}(\O / \RR) \right),
\end{equation}
where $\phi_{\RR} : \Spin(\O / \RR) \to \GG_{\O_S}(\O / \RR )$ is the covering map of finite groups and $SN_\RR$ is the spinor norm at $\RR$. If $\Lambda / \Lambda(\RR)$ is bigger than the image then there will be nontrivial representations of $\Lambda / \Lambda(\RR)$ which factor through the quotient $(\Lambda / \Lambda(\RR) ) / \mathrm{Image(\phi_\RR)}$. We let
\begin{equation}
\Lambda_1 = \ker SN\lvert_\Lambda
\end{equation}
be the kernel of the spinor norm restricted to $\Lambda$, this is a finite index normal subgroup. The index $[\Lambda : \Lambda_1]$ is bounded independently by the size of $\mathrm{Discr}(\O_S)$ and by $2^{L}$ where $L$ is the number of generators of $\Lambda$. Then we have the precise strong approximation statement
\begin{equation}
\Lambda_1 / \Lambda_1(\RR) = \ker( SN_\RR )
\end{equation}
for $\RR$ avoiding $S$.

If $\RR$ has prime factorization 
\begin{equation*}
\RR = \prod_{i=1}^{l} \P_i^{r_i}
\end{equation*}
then the group $\Lambda_1 / \Lambda_1(\RR)$ splits as a product
\begin{equation}\label{eq:factor}
\Lambda_1 / \Lambda_1(\RR) \cong \prod_{i=1}^{l} \Lambda_1 / \Lambda_1(\P_i^{r_i}),
\end{equation}
so that bounds on the size of $\Lambda_1 / \Lambda_1(\RR)$ will follow from bounds at prime power levels via $|\Lambda_1 / \Lambda_1(\RR)| = \prod_i |\Lambda_1 / \Lambda_1(\P_i^{r_i})|$. Let $\rho : \Lambda_1 / \Lambda_1(\RR) \to \Aut(V)$ be a nontrivial irreducible representation of level $\RR$, i.e. $\rho$ does not factor through a representation of $\Lambda_1 / \Lambda_1(\RR')$ for any $\RR' \lvert \RR$, $\RR' \neq \RR$. Then $\rho$ is a tensor product of irreducible representations $\rho_i :  \Lambda_1 / \Lambda_1(\P_i^{r_i}) \to \Aut(V_i)$ which are of level $\P_i^{r_i}$ respectively and 
\begin{equation}\label{eq:dimprod}
\dim \rho = \prod_{i} \dim \rho_i .
\end{equation}We have now reduced the needed argument to prime power level. We deal with the prime case first.

\subsection{Prime case}
Writing $k_\P = \O_F / \P$ for the residue field at $\P$, the previous discussion says that for $\P$ avoiding $S$ we have
\begin{equation}
\Lambda_1 / \Lambda_1(\P) =\left\{ \begin{array}{rl} 
      \Omega_{\pm}(2m, |k_\P|) & \text{if } n = 2m -2 \text{ is even} \\
      \Omega(2m+1, |k_\P|) & \text{if }  n = 2m - 1 \text{ is odd}.
    \end{array}\right.
\end{equation}

We recall some facts about these groups from \cite{SU82}. If $n = 2m - 1$ is odd the commutator subgroup $\Omega(2m+1 , |k_\P|)$ is simple and of index 2 in $\SO(2m+1,|k_\P|)$.

If $m \geq 2$ and $n = 2m-2$ even then there are two special orthogonal groups $\SO_{\pm}(2m , |k_\P| )$ and we write $\Omega_{\pm}(2m, |k_\P| )$ for the commutator subgroup. The center has size at most two and the central factor group is simple for $m \geq 3$. When $m = 2$ we have split and nonsplit versions
\begin{equation*}
\mathrm{P} \Omega_+ ( 4  , |k_\P| ) = \mathrm{PSL}_2(|k_\P|) \times \mathrm{PSL}_2(|k_\P|) ,
\end{equation*}
\begin{equation*}
\mathrm{P} \Omega_{-}( 4  , |k_\P| ) = \mathrm{PSL}_2({|k_\P|^2}) .
\end{equation*}

The following Lemma gives the needed bounds for prime levels. 
\begin{lemma}\label{dimension}
Let $\varphi$ be a nontrivial representation of $\Lambda_1 / \Lambda_1(\P)$. Then the dimension of $\varphi$ is bounded below as $|k_\P| \to \infty$ by
\begin{equation}
\dim \varphi \gg |k_\P|^{n-1} .
\end{equation}
We have for the size of the group $\Lambda_1 / \Lambda_1(\P)$
\begin{equation}\label{eq:groupsize}
| \Lambda_1 / \Lambda_1(\P) | \approx |k_\P| ^{(n+2)(n+1)/2} .
\end{equation}
\end{lemma}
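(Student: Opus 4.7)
The proof plan is to read off both statements from the explicit identification of $\Lambda_1/\Lambda_1(\P)$ as a (central quotient of a) classical group over the residue field $k_\P$, and then invoke the Landazuri--Seitz--Zalesskii bounds on minimal representation degrees.

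First I would establish the size estimate \eqref{eq:groupsize}. By the strong approximation discussion preceding the Lemma, for $\P\notin S$ the group $\Lambda_1/\Lambda_1(\P)$ is exactly $\Omega(2m+1,|k_\P|)$ in the odd case $n=2m-1$, or one of $\Omega_\pm(2m,|k_\P|)$ in the even case $n=2m-2$. The standard order formulas for finite classical groups give
\begin{equation*}
|\Omega(2m+1,q)|\;\approx\;q^{m^2}\prod_{i=1}^{m}(q^{2i}-1)\;\approx\;q^{m^{2}+m(m+1)}\;=\;q^{2m^{2}+m},
\end{equation*}
and similarly $|\Omega_{\pm}(2m,q)|\approx q^{2m^{2}-m}$. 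Substituting $m=(n+1)/2$ (odd case) or $m=(n+2)/2$ (even case) yields the common exponent $(n+1)(n+2)/2$ in both situations, proving \eqref{eq:groupsize}.

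For the dimension lower bound I would separate the center from the simple quotient. For $m\geq 3$, the central quotient is a finite simple group of Lie type of rank $m$, and the center has order at most two, so any nontrivial irreducible complex representation of $\Lambda_1/\Lambda_1(\P)$ either factors through the simple quotient or differs from such a representation only up to a bounded factor. The Landazuri--Seitz bounds, sharpened and completed by Seitz and Zalesskii \cite{SZ93}, give the minimum cross-characteristic degree of a nontrivial irreducible representation of each simple classical group; for the groups in question these are
\begin{equation*}
\Omega(2m+1,q):\;\gg q^{2m-2},\qquad \mathrm{P}\Omega^{\pm}(2m,q):\;\gg q^{2m-3},
\end{equation*}
in both cases of the order $q^{n-1}$. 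Multiplying by the bounded index from the center preserves the bound, giving $\dim\varphi\gg|k_\P|^{n-1}$.

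The only thing that requires separate attention is the small-rank regime $m=2$ (i.e.\ $n=2,3$), where the target group is not simple: $\mathrm{P}\Omega^{+}(4,q)\cong \mathrm{PSL}_2(q)\times \mathrm{PSL}_2(q)$, $\mathrm{P}\Omega^{-}(4,q)\cong \mathrm{PSL}_2(q^{2})$, and $\Omega(5,q)\cong \mathrm{PSp}(4,q)$. In each of these cases Frobenius's classical bound gives every nontrivial irreducible $\mathrm{PSL}_2$ representation dimension $\gg q$, the $\mathrm{PSL}_2(q^2)$ case dimension $\gg q^2$, and $\mathrm{PSp}(4,q)$ dimension $\gg q^{2}$ (Steinberg/Weil representations), all of which are $\gg |k_\P|^{n-1}$. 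The main obstacle in writing out the argument carefully is bookkeeping across these parity and split/non-split cases, together with tracking the bounded central quotients so that cross-characteristic minimal degree bounds transfer cleanly from the simple quotient to $\Lambda_1/\Lambda_1(\P)$ itself.
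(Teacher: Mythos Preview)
Your proposal is correct and follows essentially the same approach as the paper: identify $\Lambda_1/\Lambda_1(\P)$ with the commutator subgroup $\Omega_{(\pm)}(n+2,|k_\P|)$, read off the order from standard formulas, and invoke the Seitz--Zalesskii minimal-degree bounds for the generic case while treating the low-rank exceptional identifications separately. The only organizational difference is that the paper singles out $n=4$ rather than $n=3$, using the accidental isomorphisms $\mathrm{P}\Omega_+(6,q)\cong\mathrm{PSL}_4(q)$ and $\mathrm{P}\Omega_-(6,q)\cong\mathrm{PSU}_4(q)$ (since the tables in \cite{SZ93} are organized by Lie type and $D_3=A_3$, ${}^2D_3={}^2A_3$), whereas you fold $n=4$ into the $m\geq 3$ case and instead split off $n=3$ via $\Omega(5,q)\cong\mathrm{PSp}(4,q)$; both bookkeepings are fine.
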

\begin{proof}
By the previous discussion, outside finitely many primes we have $\Lambda_1 / \Lambda_1(\P) \cong \Omega_{(\pm)}(n+2, |k_\P| )$. The possible sizes for this group can be found in \cite{SU82}. For $n \neq 2,  4$ this is a perfect central extension of degree at most 2 of a finite Chevalley group, and lower bounds for the dimension of a nontrivial representation of such a group can be found in \cite{SZ93}. If $n = 2$ then we have $\Lambda_1 / \Lambda_1(\P) $ a degree 2 perfect central extension of $\mathrm{PSL}_2(|k_\P|) \times \mathrm{PSL}_2(|k_\P |)$ or $\mathrm{PSL}_2(|k_\P|^2 )$. At worst we have a faithful irreducible representation of $\mathrm{PSL}_2(|k_\P |)$ contained in $\varphi$. The needed bound is then well known. Finally if $n = 4$, using the accidental isomorphisms (\cite{SU82})
\begin{equation*}
\mathrm{P} \Omega_+ ( 6  , |k_\P| ) = \mathrm{PSL}_4(|k_\P|) , \quad \mathrm{P} \Omega_{-} ( 6  , |k_\P| ) = \mathrm{PSU}_4 (|k_\P|) ,
\end{equation*}
then in either case there is an associated nontrivial projective representation whose dimension can be bounded by further results tabulated in \cite{SZ93}.
\end{proof}

\subsection{Prime power case}
In this section we make use of the work of Weisfeiler \cite{WEI84} to bound the size of the group $\Lambda_1 / \Lambda_1(\P^r)$ and the work of Kelmer and Silberman \cite{KS10} along with some improvements to bound the dimension of new representations. As we have $\Lambda_1 / \Lambda_1(\P^r) = \ker SN_{\P^r}$ any new representation lifts to a nontrivial representation of the Spin group which we denote $H(\O / \P^r)$. The level structures are such that the lift is a new representation of $H( \O / \P^r)$.

For $i > 0$ let $H(\P^i)$ denote the kernel of the reduction map $H(\O / \P^r) \to H(\O / \P^{i})$, or in other words the congruence subgroup of $H(\O / \P^r)$ of level $\P^i$. Let $L$ denote the Lie algebra of $H$. We will use the following Lemma of Weisfeiler.
 
\begin{lemma}[\cite{WEI84} Lemma 5.2] \hfill \label{weilemma}
\begin{enumerate}
\item For $i > 0 $ the $H(k_\P)$ module $H(\P^i) / H(\P^{i+1})$ is isomorphic to $L(k_\P) \otimes \P^i / \P^{i+1}$, where the action on the first factor is by $\mathrm{Ad}$ and the action on the second factor is trivial.
\item The map $(x,y) \mapsto [x,y]$ maps $H(\P^i) \times H(\P^j)$ into $H(P^{i+j})$ and descends to a map
\begin{equation*}
 H(\P^i) / H(\P^{i+1}) \times  H(\P^j) / H(\P^{j+1}) \to  H(\P^{i+j}) / H(\P^{i+j+1}).
 \end{equation*}
This map is given explicitly by 
\begin{equation*} 
[x \otimes r , y \otimes s ] = [x,y] \otimes rs
\end{equation*}
when $H(\P^i) / H(\P^{i+1})$ is viewed as $L(k_\P) \otimes \P^i / \P^{i+1}$ and similarly for $j$, $j+i$.
\item If $[L(k_\P) , L(k_\P)] = L(k_\P)$ then $[H(\P) , H(\P)] = H(\P^2)$.
 \end{enumerate}
\end{lemma}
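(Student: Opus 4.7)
The plan is to use the smoothness of $H = \mathrm{Spin}$ as a group scheme over $\O_S$ to linearize the congruence filtration via explicit matrix computation. Fix a closed embedding $H \hookrightarrow \mathrm{GL}_N$ over $\O_S$ (for instance via the spin representation), so that any element of $H(\P^i) \subseteq H(\O / \P^r)$ can be written as $I + X$ with $X$ an $N \times N$ matrix having entries in $\P^i / \P^r$, subject to the defining equations of $H$. Smoothness ensures that modulo $\P^{i+1}$ these equations cut out exactly $L(k_\P) \otimes \P^i / \P^{i+1}$, where $L = \mathrm{Lie}(H)$.

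For part (1), the map $I + X \mapsto X \bmod \P^{i+1}$ is a bijection $H(\P^i)/H(\P^{i+1}) \to L(k_\P) \otimes \P^i/\P^{i+1}$. It is a group isomorphism because $(I+X)(I+Y) = I + (X + Y) + XY$ and $XY \in M_N(\P^{2i}) \subseteq M_N(\P^{i+1})$ whenever $i \geq 1$, so group multiplication collapses to vector addition. Equivariance follows from $h(I + X)h^{-1} = I + hXh^{-1}$ for any lift $h \in H(\O / \P^r)$ of an element of $H(k_\P)$, identifying the induced $H(k_\P)$-action with $\mathrm{Ad}$ on $L(k_\P)$ tensored with the trivial action on $\P^i / \P^{i+1}$.

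For part (2), I would expand the group-theoretic commutator $[I + X, I + Y] = (I+X)(I+Y)(I+X)^{-1}(I+Y)^{-1}$ using the finite geometric series $(I+X)^{-1} = I - X + X^2 - \cdots$ (valid modulo $\P^r$). A direct calculation shows that all terms of order at most $i + j$ cancel except $XY - YX = [X, Y]$, with higher-order corrections (involving $X^2 Y$, $YXY$, etc.) lying in $\P^{i+j+1}$ because $i, j \geq 1$. Hence $[I+X, I+Y] \equiv I + [X, Y] \bmod \P^{i+j+1}$, which is precisely the asserted formula $[x \otimes r, y \otimes s] = [x, y] \otimes rs$ under the tensor-product identification of part (1).

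For part (3), part (2) with $i = j = 1$ yields $[H(\P), H(\P)] \subseteq H(\P^2)$, so only the reverse inclusion needs work. I would prove by induction on $k \geq 2$ that $H(\P^2) \subseteq [H(\P), H(\P)] \cdot H(\P^{k+1})$; termination at $k = r - 1$ together with $H(\P^r) = \{1\}$ then gives $H(\P^2) \subseteq [H(\P), H(\P)]$. The inductive step uses the commutator pairing $H(\P) \times H(\P^{k-1}) \to H(\P^k)$ from part (2): passing to the associated graded, the image in $H(\P^k)/H(\P^{k+1})$ corresponds to $[L(k_\P), L(k_\P)] \otimes \P^k/\P^{k+1}$, which equals all of $L(k_\P) \otimes \P^k/\P^{k+1}$ by hypothesis. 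The main obstacle is the scheme-theoretic content of part (1): verifying that the reduction modulo $\P^{i+1}$ of the defining equations of $H$ inside $\mathrm{GL}_N$ cuts out exactly the Lie algebra, rather than a larger or smaller subspace. This reduces to smoothness of $\mathrm{Spin}$ over $\O_S$, which holds provided $2 \in \O_S^{\times}$, a condition guaranteed by enlarging $S$ if necessary.
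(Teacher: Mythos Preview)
The paper does not supply its own proof of this lemma; it is quoted verbatim from Weisfeiler \cite[Lemma~5.2]{WEI84} and used as a black box. Your argument is correct and is precisely the standard one: linearize the congruence filtration via an embedding $H \hookrightarrow \mathrm{GL}_N$, invoke smoothness of $\mathrm{Spin}$ over $\O_S$ (with $2$ inverted) so that the square-zero deformation $H(\O/\P^{i+1}) \to H(\O/\P^i)$ has kernel exactly $L(k_\P) \otimes \P^i/\P^{i+1}$, expand the matrix commutator to obtain $[I+X,I+Y] \equiv I + [X,Y] \bmod \P^{i+j+1}$, and run the descending induction on the graded pieces for part~(3). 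Two small points worth making explicit: in part~(3) the image of the commutator map on each graded piece is a priori only the \emph{set} of single brackets, so one must take products of commutators (which you implicitly do, since $[H(\P),H(\P)]$ is by definition the subgroup they generate); and the surjectivity $\P/\P^2 \cdot \P^{k}/\P^{k+1} \to \P^{k+1}/\P^{k+2}$ used in the tensor identification holds because $\P$ is locally principal. Both are routine, and your outline is sound.
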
 

Immediately it follows that $H(\P^i)$ is abelian for $i \geq r/2$ in light of $[H(\P^i) , H(\P^i)] \subseteq H(\P^{2i}) = \{ 1 \}$. Let $k = [r/2]$ be the integral part of $r/2$. Suppose that $\rho$ is a new representation of $H(\O / \P^r)$, i.e. the restriction $\Res^{H(\O / \P^r)}_{H(\P^{r-1})} \rho$ is not trivial. The work of Kelmer and Silberman \cite{KS10} can be paraphrased\footnote{We make a slight improvement here by noting that the proof of  \cite[Proposition 4.4]{KS10} goes through when their $e$ is $n-1$ for $n = 2,3,4$ in our indexing of $n$.}
\begin{lemma}[Kelmer, Silberman]\label{KS}
There is a character $\chi$ of $H(\P^{r-k})$ appearing in $\Res^{H(\O / \P^r)}_{H(\P^{r-k})} \rho$ which has nontrivial restriction to $H(\P^{r-1})$. Moreover for the orbit of $\chi$ under the co-Adjoint action of $H(\O / \P^r)$ we have
\begin{equation}
M \equiv | \mathrm{Orbit}_{(H(\O / \P^r) , \mathrm{co-Ad} ) } ( \chi ) | \gg \left\{ \begin{array}{rl}
      |k_\P|^{r(n-1)} & \text{if $r = 2k$ even}  \\
      |k_\P|^{(r-1)(n-1)} & \text{if $r = 2k+1$ odd}  \\
    \end{array}\right. 
\end{equation}
and there is an immediate bound below for $\dim \rho \geq M$.
\end{lemma}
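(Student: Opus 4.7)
The plan is to apply Clifford theory along the filtration of $H(\O/\P^r)$ by congruence subgroups and then invoke a minimum coadjoint orbit bound on $L(k_\P)$. First, since $k = [r/2]$, part~(2) of Lemma~\ref{weilemma} gives $[H(\P^{r-k}), H(\P^{r-k})] \subseteq H(\P^{2(r-k)}) = \{1\}$, so $H(\P^{r-k})$ is abelian and its irreducible representations are characters. Decompose $\Res^{H(\O/\P^r)}_{H(\P^{r-k})}\rho$ into characters of $H(\P^{r-k})$; newness of $\rho$ forces at least one such character $\chi$ to restrict nontrivially to $H(\P^{r-1})$, for otherwise $\Res^{H(\O/\P^r)}_{H(\P^{r-1})}\rho$ would be trivial. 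Since $H(\P^{r-k})$ is normal in $H(\O/\P^r)$, Clifford theory gives
\begin{equation*}
\dim \rho \;\geq\; \big| \mathrm{Orbit}_{(H(\O/\P^r),\mathrm{co-Ad})}(\chi) \big| \;=\; M,
\end{equation*}
as the conjugates of $\chi$ occurring in $\Res \rho$ all have equal multiplicity.

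Next, parts~(1) and~(2) of Lemma~\ref{weilemma} identify $H(\P^{r-k})$, as an $H(k_\P)$-module under adjoint action, with $\bigoplus_{i=r-k}^{r-1} L(k_\P) \otimes \P^i/\P^{i+1}$. A character $\chi$ is then parametrised by a tuple $(\xi_{r-k},\ldots,\xi_{r-1})$ of elements of $L(k_\P)^*$, and the nontriviality hypothesis reads $\xi_{r-1} \neq 0$. The stabiliser of $\chi$ in $H(\O/\P^r)$ projects, under reduction modulo $\P$, into the coadjoint stabiliser of $\xi_{r-1}$ in $H(k_\P)$, while the intermediate layers impose further constraints that are themselves governed by $H(k_\P)$-stabilisers of nonzero elements of $L(k_\P)^*$. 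Layer-by-layer accounting then produces $M \gg |k_\P|^{r(n-1)}$ when $r = 2k$ and $M \gg |k_\P|^{(r-1)(n-1)}$ when $r = 2k+1$, provided one has the basic orbit estimate $|\mathrm{Orbit}_{H(k_\P)}(\xi)| \gg |k_\P|^{n-1}$ for every nonzero $\xi \in L(k_\P)^*$.

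The only essentially new ingredient beyond \cite[Prop.~4.4]{KS10} is to verify this coadjoint orbit estimate with exponent $n-1$ uniformly, including the small-rank cases $n = 2, 3, 4$ where the arguments of \cite{KS10} deliver a weaker exponent. The main obstacle is precisely this step: I plan to handle $n = 2, 3, 4$ by hand using the accidental isomorphisms recalled in the previous subsection (reducing to $\mathfrak{sl}_2 \oplus \mathfrak{sl}_2$, $\mathfrak{sl}_2(k_{\P^2})$, $\mathfrak{sl}_4$, and $\mathfrak{su}_4$) and checking that no nonzero functional on the Lie algebra has an anomalously large stabiliser in the corresponding finite group. For $n \geq 5$ the generic rank-based argument of \cite{KS10} already supplies the desired exponent, and the remainder is purely Clifford-theoretic bookkeeping over the congruence filtration.
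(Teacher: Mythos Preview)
Your plan follows the Kelmer--Silberman argument that the paper cites, and the Clifford-theoretic skeleton is right. Two corrections are needed, one structural and one numerical.

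First, the identification. You write $H(\P^{r-k}) \cong \bigoplus_{i=r-k}^{r-1} L(k_\P)\otimes \P^i/\P^{i+1}$, but this is only the associated graded; the group structure and the $H(\O/\P^r)$-action do not respect this splitting. The cleaner route (and the one taken in \cite{KS10} and summarized in the paper) is to identify the Pontryagin dual $\widehat{H(\P^{r-k})}$ with $L(\O/\P^k)$ in a way that intertwines the co-Adjoint action with the Adjoint action of $H(\O/\P^k)$ (the action descends here because $H(\P^k)$ centralizes $H(\P^{r-k})$). Your character $\chi$ becomes an element $X \in L(\O/\P^k)$ whose reduction $X_\P \in L(k_\P)$ is nonzero; the induction then runs over the $k$ layers $H(\P^j)/H(\P^{j+1}) \cong L(k_\P)$, $0\le j\le k-1$, of the acting group $H(\O/\P^k)$, not over the $r$ layers of $H(\O/\P^r)$.

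Second, the exponent. At each of these $k$ layers the stabilizer condition is that the image in $L(k_\P)$ centralize $X_\P$, so the key input is the Lie-algebra centralizer bound
\[
\dim_{k_\P} C_{L(k_\P)}(X_\P) \le \dim_{k_\P} L(k_\P) - 2(n-1),
\]
equivalently $|\mathrm{Orbit}_{H(k_\P)}(\xi)| \gg |k_\P|^{2(n-1)}$, not $|k_\P|^{n-1}$. With exponent $n-1$ and $k$ layers you would only get $M \gg |k_\P|^{k(n-1)}$, half of what is claimed. With the correct $2(n-1)$ per layer you obtain $|k_\P|^{2k(n-1)} = |k_\P|^{r(n-1)}$ for $r=2k$ and $|k_\P|^{(r-1)(n-1)}$ for $r=2k+1$.

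For the small-rank cases $n=2,3,4$: the paper simply notes (footnote) that the centralizer inequality above already holds in these cases, so the Kelmer--Silberman proof goes through verbatim with $e=n-1$. Your plan to verify it via the accidental isomorphisms is a valid alternative, but it suffices to check the centralizer codimension directly in $\mathfrak{so}_4$, $\mathfrak{so}_5$, $\mathfrak{so}_6$ (the minimal nonzero adjoint orbit has dimension $2(n-1)$ in each case).
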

For $r = 2k$ this is the result which we will use. Assume now that $r = 2k + 1$ and we will look for the natural strengthening
\begin{equation}
\dim \rho \gg |k_\P|^{r(n-1)}.
\end{equation} 
As in Lemma \ref{KS} take a character $\chi$ which appears in $\Res^{H(\O / \P^r)}_{H(\P^{r-k})} \rho$ and such that $\Res^{{H(\P^{r-k})} }_{H(\P^{r-1})} \chi$ is nontrivial. We recall some of the ingredients of the proof for our own use. The co-Adjoint action of $H(\O / \P^r)$ on the unitary dual $\widehat{H(\O / \P^{r-k})}$ descends to an action of $H( \O  / \P^k)$. Then $\widehat{H(\O / \P^{r-k})}$ is isomorphic to $L(\O / \P^k)$ via a map which intertwines the co-Adjoint and adjoint actions. Under this fixed isomorphism $\chi$ is identified with an element $X \in L(\O / \P^k)$ such that $X \neq 0 \mod \P$. An orbit-stabilizer argument then provides enough characters via the Ad-orbit of $X$. The bound on the size of the stabilizer is obtained by induction and at each stage the nonzero reduction $X_\P \in L(k_\P)$ of $X$ modulo $\P$ is all the data which is needed. 
For an explicit formulation of the connection between $X_\P$ and $\chi_0 \equiv \Res^{H(\P^{r-k})}_{H(\P^{r-1})} \chi$ let $\Tr$ denote the Galois trace $\Tr : k_\P \to \mathbb{F}_p$ where $|k_\P| = p^f$ for some $f$. Then after the identification $H(\P^{r-1}) \cong L(k_\P)$ we have for $Z \in L(k_\P)$ 
\begin{equation}
\chi(Z) = \chi_0(Z) = \exp\left(\frac{2\pi i \Tr ( B(X_\P , Z) )}{p} \right),
\end{equation}
where $B$ denotes the nondegenerate Killing form on $L(k_\P)$. 

Let $V_\chi$ be the subspace of $V$ (the vector space associated to $\rho$) upon which $H(\P^{r-k})$ acts by $\chi$. As all the $V_{\chi'}$ for $\chi'$ in the co-Adjoint orbit are isomorphic and orthogonal, if we can prove the dimension of $V_\chi$ is large we get a bound 
\begin{equation}
\dim V = \dim \rho \gg \dim V_\chi \:  | \mathrm{Orbit}_{(H(\O / \P^r) , \mathrm{co-Ad} ) } ( \chi ) |.
\end{equation}
In the next Lemma we utilize a better bound on $\dim V_\chi$ to get an improvement for $r$ odd on the dimension bound in Lemma \ref{KS} (which is using the trivial $\dim V_\chi \geq 1$).
\begin{lemma}\label{newbound}
Let $r = 2k+1 \geq 3$ and $\rho$ a new representation of $H(\O / \P^r)$. Then
\begin{equation}
\dim \rho \gg |k_\P|^{r(n-1)}
\end{equation}
with implied constant uniform through $r$ and $\P$.
\end{lemma}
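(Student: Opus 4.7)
The plan is to improve the trivial bound $\dim V_\chi \geq 1$ used in Lemma \ref{KS} to $\dim V_\chi \gg |k_\P|^{n-1}$ by exploiting the action of $H(\P^k)$ on $V_\chi$. Since the coadjoint action of $H(\O/\P^r)$ on $\widehat{H(\P^{r-k})} = \widehat{H(\P^{k+1})}$ factors through $H(\O/\P^k)$, the subgroup $H(\P^k)$ stabilizes $\chi$ and so preserves $V_\chi$. Because $H(\P^{k+1}) \subset H(\P^k)$ acts on $V_\chi$ by the scalar $\chi$, this action descends to a projective representation of the abelian quotient
\begin{equation*}
H(\P^k)/H(\P^{k+1}) \cong L(k_\P),
\end{equation*}
whose 2-cocycle is given by composing the commutator pairing with $\chi$.

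Next I compute this pairing. For $x,y \in H(\P^k)$, part (2) of Lemma \ref{weilemma} sends $[x,y] \in H(\P^{2k}) = H(\P^{r-1})$ to $[X,Y] \in L(k_\P)$, where $X,Y$ are the classes of $x,y$. Combining with the explicit formula $\chi_0(Z) = \exp(2\pi i\, \Tr(B(X_\P, Z))/p)$ and the Ad-invariance of the Killing form $B$ yields the alternating bilinear form
\begin{equation*}
B_\chi(X,Y) = \exp\!\left( \tfrac{2\pi i\, \Tr\bigl(B([X_\P, X], Y)\bigr)}{p} \right).
\end{equation*}
Nondegeneracy of $B$ on $L(k_\P)$ identifies the radical of $B_\chi$ with the centralizer $C_L(X_\P) = \{Y \in L(k_\P) : [X_\P, Y]=0\}$.

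A standard Stone--von Neumann-type argument for projective representations of finite abelian groups then shows that every irreducible constituent of $V_\chi$ as a projective $L(k_\P)$-module has dimension $|L(k_\P)/C_L(X_\P)|^{1/2} = |k_\P|^{(\dim L - \dim C_L(X_\P))/2}$. It remains to produce a uniform bound $\dim L - \dim C_L(X_\P) \geq 2(n-1)$ for every nonzero $X_\P \in \mathfrak{so}(n+2, k_\P)$, i.e.\ a lower bound of $2(n-1)$ on the minimal nontrivial adjoint orbit dimension. For $n \geq 3$ this follows from the well-known fact that the minimal orbit in $\mathfrak{so}(n+2)$ has dimension $2n$; the borderline cases $n=1,2$ are handled using the accidental isomorphisms with $\mathfrak{sl}_2$ and $\mathfrak{sl}_2 \oplus \mathfrak{sl}_2$, where a direct computation shows codimension at least $2(n-1)$.

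Combining $\dim V_\chi \gg |k_\P|^{n-1}$ with the orbit-stabilizer bound $M \gg |k_\P|^{(r-1)(n-1)}$ from Lemma \ref{KS}, and using the decomposition $V = \bigoplus_{\chi'} V_{\chi'}$ over the coadjoint orbit, gives
\begin{equation*}
\dim \rho \geq M \cdot \dim V_\chi \gg |k_\P|^{(r-1)(n-1)} \cdot |k_\P|^{n-1} = |k_\P|^{r(n-1)},
\end{equation*}
with implied constants uniform in $r$ and $\P$. The main obstacle is carefully justifying the Stone--von Neumann lower bound in this finite-group projective setting (so that the square-root appears even when $V_\chi$ is a sum of constituents) and verifying the orbit-dimension inequality uniformly in the characteristic; both are essentially standard but demand care at small $n$.
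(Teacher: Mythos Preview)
Your proposal is correct and follows essentially the same route as the paper. Two remarks on how the paper executes the details differently from your outline.

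First, instead of invoking a Stone--von Neumann theorem as a black box, the paper proves the needed lower bound on an irreducible $H(\P^k)$-constituent $W\subset V_\chi$ by hand: one looks at $\bar\phi\otimes\phi\cong\End(W)$, which factors through $L(k_\P)$ and decomposes into characters $\theta_i\leftrightarrow Y_i\in L(k_\P)$; the set $U=\{Y_i\}$ is a subgroup (closed under composition in $\End(W)$), and one shows $U^{\perp}\subseteq C_{L(k_\P)}(X_\P)$ exactly via the commutator computation you wrote down. Then $(\dim W)^2\geq |U|\geq |k_\P|^{2(n-1)}$. This is of course equivalent to your Heisenberg/Stone--von Neumann formulation, but it sidesteps the issues you flag about justifying that theorem in the finite projective setting.

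Second, the paper does not rederive the centralizer bound: it simply quotes $\dim_{k_\P}C_{L(k_\P)}(X_\P)\leq\dim_{k_\P}L(k_\P)-2(n-1)$ from \cite{KS10} (this is their inductive ingredient for the orbit size, reused here). Your appeal to minimal orbit dimensions is in the right spirit, but note the minimal nilpotent orbit in $\mathfrak{so}(n+2)$ has dimension $2(n-1)$, not $2n$; the bound is sharp and one must be careful about uniformity in the characteristic, which is why citing \cite{KS10} is the cleaner move.
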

\begin{proof} Take $\chi$, $V_\chi$ as before. We avoid all finitely many primes $\P$ where the characteristic $p$ of $k_\P$ is ramified in $F$. By Lemma \ref{weilemma} $H(\P^{r-k})$ is in the center of $H(\P^k) = H(\P^{r-k-1})$ and so $H(\P^k)$ preserves $V_\chi$. Let $\phi_0$ denote the subrepresentation of $\Res^{H(\O / \P^r)}_{H(\P^k)} \rho$ corresponding to $V_\chi$ and choose some irreducible representation $(\phi , W)$ of $H(\P^k)$ appearing in $(\phi_0, V_\chi)$. Our trick is to consider the  $\Hom(W,W) \cong \bar{\phi} \otimes \phi$ representation of $H(\P^k)$. As $H(\P^{r-k})$ acts as an irreducible character on $W$ it acts trivially on $\Hom(W,W)$ and so $\bar{\phi}\otimes \phi$ factors through a representation of the abelian group $H(\P^k) / H(\P^{r-k}) $ which is isomorphic to $L(k_\P)$ by Lemma \ref{weilemma}. Then $\bar{\phi} \otimes \phi$ splits as a direct sum of irreducible characters of $L(k_\P)$
\begin{equation}
 \bar{\phi} \otimes \phi \cong \bigoplus_{i = 1}^{(\dim W)^2} \theta_i.
\end{equation}
Similarly to before we can write for each $i$ and $Z \in L(k_\P)$
\begin{equation}
\theta_i (Z) =   \exp\left(\frac{2\pi i \Tr ( B(Y_i , Z) )}{p} \right)
\end{equation}
for uniquely determined $Y_i \in L(k_\P)$. Let $U = \langle Y_1, \ldots , Y_{(\dim W)^2} \rangle$ be the $\mathbb{F}_p$ vector subspace of $L(k_\P)$ spanned by the $Y_i$. In fact, via composition in $\Hom(W,W)$ any element of this space is one of the $Y_i$ as the $\theta_i$ form an abelian group. Let $U^\perp$ be the orthogonal subspace to $U$ with respect to $\Tr ( B ( \bullet, \bullet ) )$. For $v \in U^\perp$ we have  $\theta_i(v) = 1$ for all $i$ and therefore $\bar{\phi} \otimes \phi (\tilde{v}) = \mathrm{Id}_{\Hom(W,W)}$, where $\tilde{v}$ denotes a lift of $v$ in $H(\P^k)$.  The group $G_v$ generated by $\tilde{v}$ and all of $H(\P^{r-k})$ does not depend on the lift and is an abelian group by Lemma \ref{weilemma}. Therefore $\Res^{H(\P^k)}_{G_v} \phi$ splits as a direct sum of irreducible characters. As $\bar{\phi} \otimes \phi (\tilde{v})$ is trivial all of these characters must be the same and therefore $\phi(\tilde{v})$ is a scalar multiple of the identity on $W$. For any $\tilde{z} \in H(\P^k)$ reducing mod $H(\P^{r-k})$ to $z \in L(k_\P)$ we have for the image of the group commutator
\begin{equation}
1 = \phi( [\tilde{v},\tilde{z}]_{\mathbf{Grp}} )  = \phi( [v,z]_{\mathbf{Lie}} ),
\end{equation}
where $[v,z]_{\mathbf{Lie}}$ is viewed as an element of $H(\P^{r-1})$ by Lemma \ref{weilemma}. Moreover
\begin{equation*}
 \phi( [\tilde{v},\tilde{z}]) = \chi_0( [v,z] ) = \exp \left( \frac{2\pi i \Tr ( B(X_\P , [v,z]) )}{p} \right)
 \end{equation*}
 in the notation from our previous discussion so in particular 
 \begin{equation}
 \Tr ( B(X_\P , [v,z])) = 0
 \end{equation}
 for all $z \in L(k_\P)$, or what is the same by ad-invariance of the Killing form
 \begin{equation}
 \Tr( B([v,X_\P] , z )) = 0.
 \end{equation}
 By nondegeneracy it follows that $[v, X_\P] = 0$, i.e. $v$ is in the centralizer $C_{L(k_\P)}(X_\P)$. As in \cite{KS10} we have\footnote{This inequality is the key to the inductive step of the proof of Lemma \ref{KS}, so it is fitting that it is also the ingredient here.}
 \begin{equation}\label{eq:centralizer}
 \dim_{k_\P} C_{L(k_\P)}(X_\P) \leq \dim_{k_\P}L(k_\P) - 2(n-1)
 \end{equation}
 and as we have shown $U^\perp \subseteq C_{L(k_\P)}(X_\P)$ it follows that 
\begin{align*}
\dim_{\mathbb{F}_p} U^{\perp} \leq \dim_{\mathbb{F}_p} C_{L(k_\P)}(X_\P) &= f \dim_{k_\P} C_{L(k_\P)}(X_\P) \\
&\leq f (\dim_{k_\P}L(k_\P) - 2(n-1)) \\
&= \dim_{\mathbb{F}_p} L(k_\P) - 2 f (n-1)
\end{align*}
where $f$ is the inertia degree. Then immediately $\dim_{\mathbb{F}_p} U \geq 2f ( n - 1 )$ so that
\begin{equation}\label{eq:Wbound}
\dim W \geq \sqrt{ \# \text{ of $Y_i $} } = \sqrt{ |\mathbb{F}_p|^{\dim_{\mathbb{F}_p} U } } \geq \sqrt{ |\mathbb{F}_p|^{2 f ( n - 1 ) }}=  |k_\P|^{n-1}.
\end{equation}
Finally $W$ is a subspace of $V_\chi$ and as we remarked before we have the bound
\begin{equation}
\dim \rho \geq \dim V_\chi | \mathrm{Orbit}_{(H(\O / \P^r) , \mathrm{co-Ad} ) } ( \chi ) | \geq \dim W | \mathrm{Orbit}_{(H(\O / \P^r) , \mathrm{co-Ad} ) } ( \chi ) | .
\end{equation}
Inserting the bound from Lemma \ref{KS} on the size of the orbit together with the bound in \eqref{eq:Wbound} gives us the desired result
\begin{equation}
\dim \rho \gg |k_\P|^{n-1} |k_\P|^{(r-1)(n-1)} = |k_\P|^{r(n-1)} .
\end{equation}
\end{proof}
We remark that the idea of this proof should hold in the more general setting of Lemma \ref{weilemma}, provided centralizer bounds analogous to \eqref{eq:centralizer} can be obtained.

\subsection{General result}
Now we have all that is required for the main result of this section.
\begin{lemma}\label{generalfinite} If \begin{equation}
\RR = \prod_{i=1}^{l} \P_i^{r_i}
\end{equation}
is the prime factorization of $\RR$ an ideal in $\O_F$, then for the size of the group $\Lambda_1 / \Lambda_1(\RR)$ we have
\begin{equation}
|\Lambda_1 / \Lambda_1(\RR)| \approx \prod_{i = 1}^{l}  |k_{\P_i}| ^{r_i(n+1)(n+2)/2} = |\O_F / \RR |^{(n+1)(n+2)/2}.
\end{equation}
Any representation $\rho$ of $\Lambda_1 / \Lambda_1(\RR)$ of level $\RR$ has dimension
\begin{equation}
\dim \rho \gg \prod_{i = 1}^{l} |k_{\P_i}|^{r_i(n-1)} = |\O_F / \RR |^{n-1}.
\end{equation}
\end{lemma}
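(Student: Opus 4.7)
The plan is to bootstrap from the prime-power case using the factorizations \eqref{eq:factor} and \eqref{eq:dimprod}, which were already set up in the reduction step. Both claims become multiplicative statements in the primes $\P_i$ dividing $\RR$.

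For the size bound I would first prove the prime-power estimate $|\Lambda_1 / \Lambda_1(\P^r)| \approx |k_\P|^{r(n+1)(n+2)/2}$. The base case $r=1$ is exactly \eqref{eq:groupsize} of Lemma \ref{dimension}. For $r \geq 2$, I would use the filtration of the Spin group by the congruence subgroups $H(\P^i)$ and invoke Lemma \ref{weilemma}(1), which identifies each successive quotient with $L(k_\P) \otimes \P^i / \P^{i+1}$. Since $\P^i / \P^{i+1}$ is a one-dimensional $k_\P$-vector space and $\dim L = \dim \GG = (n+1)(n+2)/2$, each successive quotient contributes a factor of $|k_\P|^{(n+1)(n+2)/2}$. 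Combined with the top-level estimate this gives $|H(\O/\P^r)| \approx |k_\P|^{r(n+1)(n+2)/2}$. The group $\Lambda_1/\Lambda_1(\P^r)$ differs from $H(\O/\P^r)$ only by the kernel $\mu_2$ of the covering, whose size is bounded independently of $\P$, so the same estimate carries over. Multiplying across the prime factorization and recalling $|\O_F / \RR| = \prod_i |k_{\P_i}|^{r_i}$ yields the stated size.

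For the dimension bound, by \eqref{eq:dimprod} it suffices to prove $\dim \rho_i \gg |k_{\P_i}|^{r_i(n-1)}$ for each prime-power factor. Here I split into three cases according to $r_i$: for $r_i = 1$ the bound is Lemma \ref{dimension}; for $r_i = 2k$ even it is the ``$r = 2k$ case'' of Lemma \ref{KS}; and for $r_i = 2k+1 \geq 3$ odd it is Lemma \ref{newbound}. In each case the constants are uniform in $\P_i$ and (for the Weisfeiler-type lemmas) in $r_i$, so the implied constants survive taking a product over the finitely many prime factors. Taking the product of the per-prime bounds produces $\prod_i |k_{\P_i}|^{r_i(n-1)} = |\O_F/\RR|^{n-1}$.

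The main obstacle is really just the careful bookkeeping to ensure uniformity of the implied constants across the prime factors of $\RR$ (so that the product estimate is not of the form $C^{\omega(\RR)}$ for a varying number of factors). This is already built into Lemmas \ref{dimension}, \ref{KS}, \ref{newbound}, which each give constants independent of $\P$ and $r$; the only genuinely new input needed here is the filtration argument for the size, and that is essentially immediate from Lemma \ref{weilemma}(1) once one notes that $\dim L = (n+1)(n+2)/2$. No additional representation-theoretic or arithmetic input is required beyond the lemmas already established in this section.
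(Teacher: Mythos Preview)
Your proposal is correct and follows essentially the same route as the paper: reduce to prime powers via \eqref{eq:factor} and \eqref{eq:dimprod}, get the group size from Lemma~\ref{dimension} at the prime level together with the filtration of Lemma~\ref{weilemma}(1), and get the dimension bound by invoking Lemma~\ref{dimension} for $r_i=1$, Lemma~\ref{KS} for $r_i$ even, and Lemma~\ref{newbound} for $r_i\geq 3$ odd. Your explicit attention to the uniformity of constants across prime factors (so that the product does not pick up a $C^{\omega(\RR)}$) is a point the paper leaves implicit; it is harmless for the application since any such factor is $|\O_F/\RR|^{o(1)}$ and is absorbed by the $\epsilon$ in Section~\ref{proof}.
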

\begin{proof}
For the size of the group it is sufficient to give the size of each of the $\Lambda_1 / \Lambda_1(\P_i^{r_i} )$ by \eqref{eq:factor}. This is obtained from the bound at prime level in Lemma \ref{dimension} together with Lemma \ref{weilemma} which gives the size of the factor $H(\P_i^j) / H(\P_i^{j+1})$ as $ |k_{\P_i} |^{(n+2)(n+1)/2} = |L(k_{\P_i})|$. The bound on the dimension of $\rho$ follows from the discussion leading up to \eqref{eq:dimprod} together with the bounds obtained at prime level in Lemma \ref{dimension}, at even power of prime level in Lemma \ref{KS} and at odd power of prime level in Lemma \ref{newbound}. 
\end{proof}

\section{Eigenfunction Estimates}\label{collarsection}
\subsection{The geometry near infinity}
Recall that we are considering geometrically finite, torsion free and orientation preserving subgroups $\Lambda , \Lambda(\RR)$ of $\Isom^{+}(\H^{n+1})$ (here we replace $\Lambda$ with the $\Lambda_1$ of the previous section if necessary). The geometry of the quotients $X(\RR) = \H^{n+1} / \Lambda(\RR)$ is well understood, although less is usually said about the nature of the covering maps $\pi_\RR : X(\RR) \to X$. We aim in this section to describe the geometry of the covering maps near infinity. A good description of the geometry at a fixed level can be found in the paper of Mazzeo and Phillips \cite{MP90}. We follow the notation of \cite{GUI09}, where Guillarmou considers slightly less general spaces (with finite holonomy in the cusps) but provides very useful analytic lemmas which can be extended to our case without difficulty.

The space $\H^{n+1}$ has a natural compactification by adding a sphere at infinity $S^n_\infty$. The action of $\Lambda$ on $\H^{n+1}$ extends to the sphere. If $o$ is a point in $\H^{n+1}$ then the orbit $\Lambda o$ accumulates on a subset of the boundary denoted $L(\Lambda)$, the limit set of $\Lambda$ (that this set is independent of $o$ is a general property of nonpositive curvature). The complement $S^n_\infty - L(\Lambda)$ is called the domain of discontinuity. It follows that $\Lambda$ acts discretely and properly discontinuously on the domain of discontinuity. As $\Lambda$ is geometrically finite it has a Dirichlet fundamental domain $\F$ in $\H^{n+1}$ which is finitely faced by totally geodesic hypersurfaces. If $\Lambda$ is infinite index in $\G$ then this fundamental domain necessarily extends to the boundary, and the bounding hypersurfaces meet the boundary in subsets of $n-1$ dimensional spheres. Similar statements hold for $\Lambda(\RR)$, in particular a fundamental domain $\F_\RR$ for $\Lambda(\RR)$ is paved by images of $\F$ under coset representatives of $\Lambda / \Lambda(\RR)$. 

The elements of $\G$ can be classified by their fixed points on the boundary. Either $\g$ fixes two points on the boundary and the geodesic between these two points, in which case it is called \textit{hyperbolic}, or $\g$ has one fixed point and acts by Euclidean motions on horospheres tangent to this point. In the latter case $\g$ is called \textit{parabolic}.

Away from parabolic fixed points, the region where $\F$ meets the boundary can be covered by finitely many charts isometric to regions
\begin{align}
M_r &= \{(x,y) \in (0,\infty) \times \R^n \: ; \: x^2 + |y|^2 < 1 \} , \\  
g_r &= x^{-2}(dx^2 + dy^2), \nonumber
\end{align}
which we call regular neighbourhoods. These can be chosen sufficiently small so that they project isometrically to the quotient $X$. The pulled back charts cover a corresponding region in $X(\RR)$. The remaining neighbourhoods are near parabolic fixed points. Suppose that $\infty$ in the upper half space model is the fixed point of a parabolic element, by conjugating if necessary. Let $\Lambda_\infty$ consist of all elements of $\Lambda$ with this fixed point. This group is purely parabolic by discreteness and can be thought of as acting as Euclidean isometries on any horosphere, which is isomorphic to $\R^n$. Some of the facts which follow come from the theory of Bieberbach groups for which the reader can consult the notes of Thurston \cite{TH97}.  

Following \cite{MP90}, the group $\Lambda_\infty$ contains a maximal normal free abelian subgroup $\Lambda_a$ of finite index. We define the rank $k$ of the cusp at $\infty$ to be the rank of $\Lambda_a$. There exists a maximal affine subspace $\R^k \subset \R^n$ fixed by $\Lambda_\infty$. The subgroup $\Lambda_a$ acts on this space by translations and the quotient $F_k \equiv \R^n / \Lambda_\infty$ is the total space of a flat vector bundle of rank $n - k$ over a compact flat base manifold $B_k$. This $B_k$ can realized concretely as $\R^k / \G_\infty$. It is covered by a flat $k$-torus $T^k = \R^k / \Lambda_a$ by the usual Galois correspondence, the covering map coming from the map of flat vector bundles $\tilde{F}_k \equiv \R^n / \Lambda_a \to \R^n / \Lambda_\infty$ restricted to the zero section. We use $y$ for a local coordinate in the fibre coming from $\R^{n-k}$, and $z$ for a coordinate on $B_k$ coming from the covering $\R^k \to \R^k / \Lambda_\infty$. To cover the regions at infinity in $X$ which come from parabolic fixed points we can use charts isometric to rank $k$ cuspidal neighbourhoods
\begin{align}
M_k &= \{(x,[y,z]) \in (0,\infty) \times F_k \: ; \: x^2 + |y|^2 > 1 \} , \\
g_k &= x^{-2}(dx^2 + dy^2 + dz^2), \nonumber
\end{align}
where we are writing $[y,z]$ for local trivializing coordinates. The quadratic differential $dz^2$ refers to a flat metric on $B_k$. Note also that $|y|^2$ is a well defined function, as changing trivialization affects an orthogonal transformation on $y$ and similarly $dy^2$ is defined independently of trivialization. In general there are finitely many cusps of each rank but to simplify the discussion we assume that there is only one neighbourhood of each cuspidal type $M_k$. We drop the isometries which identify the sets in $X$ with the model neighbourhoods and think of the $M_r$, $M_k$ as sets in $X$. It can be arranged so that all the cuspidal neighbourhoods are disjoint.

We consider now the covering maps $\pi_\RR : X_\RR \to X$. If $q$ is a parabolic fixed point of $\Lambda$ with stabilizer $\Lambda_q$ then $\Lambda_q \cap \Gamma(\RR)$ also fixes this point and is the stabilizer in $\Lambda(\RR)$,  i.e. $\Lambda(\RR)_q = \Lambda_q \cap \Gamma(\RR)$. In addition, $\Lambda_a \cap \Gamma(\RR)$ is the maximal normal free abelian subgroup $\Lambda(\RR)_a$ in $\Lambda(\RR)_q$. There is then a map of flat rank $n-k$ vector bundles $F_{k,\RR} \equiv \R^n / \Lambda(\RR)_q \to F_k$ which when restricted to the zero section gives a covering map of flat compact manifolds $B_{k,\RR} \equiv \R^k / \Lambda(\RR)_q \to B_k$. If one considers the images of $\F$ under coset representatives of $\Lambda / \Lambda(\RR)$ one sees that $\pi_\RR^{-1}(M_k)$ is isometric to a disjoint union of isometric cuspidal neighbourhoods
\begin{equation}
\pi_\RR^{-1}(M_k) \cong \coprod_{i = 1}^{m_{k,\RR}} M_{k,\RR} 
\end{equation}
where
\begin{align}
M_{k,\RR} &= \{(x,[y,z]) \in (0,\infty) \times F_{k,\RR} \: ; \: x^2 + |y|^2 > 1 \} , \\
g_k &= x^{-2}(dx^2 + dy^2 + dz^2), \nonumber
\end{align}
and as before, $y$ and $z$ refer to trivializing coordinates for the flat vector bundle.  If $l_{k,\RR}$ is the degree of the covering of base manifolds $B_{k,\RR} \to B_k$ then we have $m_{k,\RR} l_{k,\RR} = |\Lambda / \Lambda(\RR)|$, the degree of the covering map $\pi_\RR$. As we will work with the covering tori $T^k_\RR \equiv \R^k / \Lambda(\RR)_a \to T_k$ it is salient to note that the obvious diagram of covering maps involving the tori $T^k_\RR$, $T^k$ and base manifolds $B_{k,\RR}$, $B_\RR$ is commutative. The covering map at $M_k$ can be given explicitly with respect to these neighbourhoods, indeed it is directly induced by the map $F_{k,\RR} \to F_k$ so that the $x$ coordinate is preserved. As for the regular neighbourhoods, it can be arranged so that the preimage $\pi_\RR^{-1}(M_r)$ of each individual $M_r$ is isometric to $|\Lambda / \Lambda(\RR)|$ disjoint copies of $M_r$. Then the covering map in each disjoint copy is given by the identity with respect to these charts.

Now we seek compactification coordinates for these charts. The transformation defined locally and which only affects the coordinate in $\R^+$ and in the fibre
\begin{equation}
(x,[y,z]) \mapsto (t , [u,z] ) = \left ( \frac{x}{x^2 + |y|^2} , [\frac{-y}{x^2 + |y|^2} , z] \right)
\end{equation}
actually (by flatness) gives a diffeomorphism from $(M_k , g_k)$ to 
\begin{equation*}
\{ (t,[u,z]) \in (0,\infty) \times F_{k} \: ; \: t^2 + |u|^2 < 1 \}.
\end{equation*}
The pushed forward metric is
\begin{equation}
t^{-2}(dt^2 + du^2 + (t^2 + |u|^2)^2 dz^2 ).
\end{equation}
These coordinates allow the charts $M_k$ to be smoothly compactified by adding a $\{ t = 0\}$ portion to form $\bar{M_k}$. The charts $M_r$ also naturally compactify by adding a $\{ x = 0 \}$ part and all these boundary pieces join together to give a smooth boundary $\delta \bar{X}$ to $X$. The compactification of $X$ is denoted $\bar{X}$, this has the structure of a smooth compact manifold with boundary $\delta\bar{X}$. A similar procedure takes place to smoothly compactify $X(\RR)$ to $\bar{X}(\RR)$. For each $M_k$ there is a cusp submanifold isomorphic to $B_k$ of $\delta\bar{X}$ corresponding to the zero section of $F_k$ at $t = 0$. The cusp submanifold of $\delta\bar{X}$ coming from $M_k$ is denoted $b_k$ and we define
\begin{equation}
B \equiv \delta\bar{X} - \coprod _{k=1}^n b_k .
\end{equation}
Following the same procedure at level $\RR$ we get a regular boundary part $B(\RR)$ which naturally covers $B$. A collar neighbourhood of the boundary could now be expected by Milnor's Collar Neighbourhood Lemma \cite{MIL65}; the following proposition gives us some fine control over the geometry in such a neighbourhood.
\begin{prop}\label{collarmetric} In some collar neighbourhood $(0,\e)_\rho \times \delta \bar{X}$ of $\delta \bar{X}$ the hyperbolic metric is given by
\begin{equation}
g = \frac{d \rho^2 + h(\rho) }{\rho^2}
\end{equation}
for some smooth family of symmetric tensors $h(\rho)$ on $\delta \bar{X}$, depending smoothly on $\rho$, positive for $\rho > 0$ with $h(0) = h_0$ positive on $B$ and satisfying
\begin{equation*}
h(\rho) = du^2 + (\rho^2 + |u|^2)^2 dz^2
\end{equation*}
in each $\bar{M}_k$. Moreover $\rho = t$ in $\bar{M}_k$.
\end{prop}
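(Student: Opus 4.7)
The proposition is a version of the Fefferman--Graham normal form for conformally compact metrics, matched to the explicit cuspidal model so that the normal-form coordinate coincides with $t$ on each $\bar M_k$. I would carry this out in three steps: verify the normal form in each local chart, glue via a standard Hamilton--Jacobi argument, and then use uniqueness to conclude that the result already equals $t$ in the cusps.

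First, I would verify that in each of the model neighborhoods the metric is already in the target form with $|d\rho|_{\rho^2 g}=1$. In a regular chart $M_r$ one has $x^2 g=dx^2+dy^2$, so $|dx|_{x^2 g}=1$ and $h(x)\equiv dy^2$. In the compactified cuspidal chart $\bar M_k$ obtained from the coordinate change just above the proposition, $t^2 g=dt^2+du^2+(t^2+|u|^2)^2 dz^2$, giving $|dt|_{t^2 g}=1$ and $h(t)=du^2+(t^2+|u|^2)^2 dz^2$. Thus $x$ and $t$ are geodesic boundary defining functions on the charts on which they are defined, and the desired normal form already holds locally.

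Second, I would produce a single global geodesic defining function. Using a partition of unity subordinate to the covering by charts, choose a smooth boundary defining function $\rho_0$ on $\bar X$ with $\rho_0\equiv t$ on each $\bar M_k$; this is possible because the cuspidal neighborhoods are disjoint. Following the standard construction (as in Graham--Lee, and as explained in the asymptotically hyperbolic context by Guillarmou \cite{GUI09}), seek $\rho=e^{\omega}\rho_0$ satisfying $|d\rho|_{\rho^2 g}^2=1$ in a one-sided neighborhood of $\delta\bar X$. This reduces to a noncharacteristic first-order ODE for $\omega$ along the $\rho_0^2 g$-gradient flow of $\rho_0$, with boundary condition $\omega|_{\delta\bar X}=0$, and admits a unique smooth solution in a collar. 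Trivializing that collar by the unit $\rho^2 g$-gradient flow of $\rho$ brings $g$ into the form
\begin{equation*}
g=\rho^{-2}\bigl(d\rho^2+h(\rho)\bigr),
\end{equation*}
where $h(\rho)$ is the induced family of symmetric tensors on the level sets of $\rho$, identified with $\delta\bar X$ by the flow.

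Third, I would check the listed properties. Smoothness of $h(\rho)$ in $\rho$ and positivity for $\rho>0$ follow because $\rho^2 g$ is positive definite and splits orthogonally into the $d\rho^2$ piece and the tangential piece $h(\rho)$. The boundary value $h_0$ equals $dy^2$ in each regular chart, hence is positive on $B$. In each $\bar M_k$ the function $t$ already satisfies $|dt|_{t^2 g}=1$ and induces the same boundary datum as $\rho_0$, so by uniqueness of the solution of the Hamilton--Jacobi equation $\rho=t$ on $\bar M_k$, whence $h(\rho)$ takes the stated cuspidal form. The main obstacle is organizational rather than analytic: the key is to start the construction from an initial defining function that already matches $t$ on the cusps, so that the uniqueness of the geodesic normalization forces the final $\rho$ to coincide with $t$ there; the degeneracy of $h(0)$ along the cusp submanifolds $b_k$, where the $dz^2$ coefficient vanishes, is automatic from the cuspidal model and is consistent with the proposition, which only asserts positivity on $B$.
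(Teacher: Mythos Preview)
Your approach is correct and is essentially the same as the paper's: the paper does not give a proof but refers to Guillarmou \cite{GUI09} for the geodesic normal-form construction, noting only that ``the proof goes through in our slightly more general case as it only relies on a PDE being non-characteristic away from the cusps.'' Your Hamilton--Jacobi argument is exactly that construction, and your device of fixing $\rho_0\equiv t$ on each $\bar M_k$ (so that $\omega=0$ is already the solution there and the eikonal equation need only be solved on the regular part where $\rho_0^2 g$ is nondegenerate) is precisely what makes the noncharacteristic-away-from-cusps remark go through and forces $\rho=t$ in the cusps by uniqueness.
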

This appears as a discussion in \cite{GUI09}. It extends to include maximal rank cusps by interpreting the $u$ variable as absent. Also, Guillarmou is only considering the case of rational cusps, but the proof goes through in our slightly more general case as it only relies on a PDE being non-characteristic away from the cusps.

The boundary defining function $\rho$ lifts from $X$ to $X(\RR)$. By our previous remarks on the nature of the covering maps in local charts, by making $\e$ small enough so $\rho^{-1}((0,\e))$ does not escape any of the local charts we can use the same $\e$ for all $\RR$. The metric provided also lifts to the collar neighbourhood $(0,\e)_{\pi_\RR^* \rho} \times \delta\bar{X}(\RR)$. By examining the covering map in the cuspidal regions we get an exact form for the metric at each level $\RR$. 

\subsection{Bounds below in the cusps} Here we examine the cuspidal regions near infinity. There the metric is exact and we can get an exact result. In the local $[u,z]$ coordinates corresponding to the boundary part in $\bar{M}_k$, define the product regions $N_k(R) = (0, 1/R) \times \{ |u|^2 < 3/4 \}$, and $N_{k,\RR}(R) = \pi_{\RR}^{-1}( N_k(R) )$. In the maximal rank cusp $N_n(R) = (0, 1/R) \times B_n$. Note that by choosing $R_0$ large enough, we can assume that for all $R > R_0$, $N_k(R) \subset \bar{M}_k$ (hence the corresponding result at level $\RR$), and we make this increase for $R_0$ immediately if necessary, and consider $R>R_0$.

We separate into two cases depending on whether the holonomy representation
\begin{equation}
h : \pi_1(T^k) \to \mathrm{O}(n-k)
\end{equation}
has finite image. If it does then we can pass to a finite locally isometric covering  $f : (\overline{T^k}, f^* dz^2) \to (B_k, dz^2) $ such that $f^* F_{k}$ is trivial. The torus $\overline{T^k}$ is the quotient $\R^k / \ker(h)$. Moreover the holonomy representation of $T^k_\RR$ is the restriction to a smaller group so remains finite at all levels. The reader can also note that the finite holonomy property is always satisfied for $n=2$. We cover finite holonomy in the following Lemma. 
\begin{lemma}\label{cusppos1}  If the holonomy representation of $T^k$ has finite image then
\begin{equation}
\Delta_{X(\RR)}\lvert_{C^{\infty}_0 ( N_{k,\RR}(R) )} \geq n^2 / 4.
\end{equation}
\end{lemma}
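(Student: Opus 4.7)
The plan is to reduce the lower bound to a weighted one-dimensional Hardy inequality in the height coordinate of the upper half-space, after passing to a finite cover where the fiber bundle is trivialized.

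The finite holonomy hypothesis permits passing from $T^k_\RR$ to a finite locally isometric cover $\overline{T^k_\RR}$ trivializing $F_{k,\RR}$, which induces a finite Riemannian cover of the cuspidal region $M_{k,\RR}$ on which one has global upper half-space coordinates $(x,y,z)\in(0,\infty)\times\R^{n-k}\times\overline{T^k_\RR}$, with metric $g=x^{-2}(dx^2+dy^2+dz^2)$ and volume form $x^{-(n+1)}\,dx\,dy\,dz$. Because the cover is locally isometric, the ratio of Dirichlet form to $L^2$ norm is preserved under lifting, so it suffices to prove the bound on the cover. Moreover, since $\operatorname{supp}(f)$ is a compact subset of the open set $N_{k,\RR}(R)\subset M_{k,\RR}$ and $x$ is a positive continuous function there, one has $0<x_{\min}\leq x\leq x_{\max}<\infty$ on $\operatorname{supp}(f)$; in particular $f(\cdot,y,z)$ has compact support in $(0,\infty)$ for each fixed $(y,z)$.

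Because translation in $z$ is an isometry, Fourier-decompose $f=\sum_\xi f_\xi(x,y)\,e^{2\pi i\langle\xi,z\rangle}$. A direct computation from the explicit metric gives
\[
\langle\Delta f,f\rangle=V(\overline{T^k_\RR})\sum_\xi\int\bigl(|\partial_x f_\xi|^2+|\nabla_y f_\xi|^2+4\pi^2|\xi|^2|f_\xi|^2\bigr)\,x^{-(n-1)}\,dx\,dy.
\]
Dropping the two manifestly nonnegative terms involving $|\nabla_y f_\xi|^2$ and $|\xi|^2|f_\xi|^2$, it remains to establish the one-dimensional Hardy inequality
\[
\int_0^\infty|\partial_x\phi|^2\,x^{-(n-1)}\,dx\geq\tfrac{n^2}{4}\int_0^\infty|\phi|^2\,x^{-(n+1)}\,dx
\]
for $\phi=f_\xi(\cdot,y)$. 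This follows by substituting $\phi(x)=x^{n/2}\psi(x)$ and integrating by parts in $x$: the compact support of $\psi$ in $(0,\infty)$ kills the cross term $n\int\operatorname{Re}(\bar\psi\psi')\,dx=\tfrac{n}{2}\int\partial_x(|\psi|^2)\,dx=0$, and the difference of the two sides collapses to $\int_0^\infty x|\psi'|^2\,dx\geq 0$. Integrating in $y$ and summing over $\xi$ via Parseval yields the claimed $\langle\Delta f,f\rangle\geq\tfrac{n^2}{4}\|f\|^2$.

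The main technical obstacle is really the setup rather than the computation: one must carefully pass to the trivializing cover, track the level structure, and verify that $\operatorname{supp}(f)$ stays compactly inside $\{x>0\}$ on the cover (this uses that $N_k(R)\subset M_k=\{x^2+|y|^2>1\}$ together with the $t<1/R$ truncation, which forces $x$ to lie in a closed subinterval of $(0,\infty)$ on the support). Once these are in place, the one-dimensional Hardy inequality does all the work, with the best constant $n^2/4$ matching the bottom of the $L^2$-spectrum of $\H^{n+1}$.
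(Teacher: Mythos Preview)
Your proof is correct and complete. Both you and the paper begin with the same lifting argument to a finite cover trivializing the flat bundle, but you then take a different analytic route. The paper works in the compactification coordinates $(t,u,z)$ (where the metric is $t^{-2}(dt^2+du^2+(t^2+|u|^2)^2dz^2)$), substitutes $t=e^{-\tau}$, and conjugates $\Delta$ by $|g|^{1/4}$ to obtain
\[
L=-\partial_\tau^2+e^{-2\tau}\Delta_u^{\mathrm{Euc}}+\frac{e^{-2\tau}}{(e^{-2\tau}+|u|^2)^2}\Delta_z^{\mathrm{flat}}+\frac{n^2}{4},
\]
from which nonnegativity of the first three terms is immediate. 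You instead stay in the original half-space coordinates $(x,y,z)$ where the metric is simply $x^{-2}(dx^2+dy^2+dz^2)$, compute the Dirichlet form directly, Fourier-expand in $z$, drop the manifestly nonnegative transverse terms, and reduce to the one-dimensional weighted Hardy inequality $\int|\phi'|^2 x^{-(n-1)}dx\geq\frac{n^2}{4}\int|\phi|^2 x^{-(n+1)}dx$. Your substitution $\phi=x^{n/2}\psi$ is, in disguise, the same conjugation by $|g|^{1/4}$ (after $x=e^s$ it becomes multiplication by $e^{ns/2}$), so the two arguments share the same core. Your version is slightly more elementary and makes the role of the sharp Hardy constant $n^2/4$ transparent; the paper's version has the advantage of being carried out in the same compactification coordinates used throughout Section~\ref{collarsection} for the regular-neighbourhood estimates, and of quoting the conjugated operator from \cite{GUI09} rather than recomputing it.
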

 
\begin{proof} First we make a lifting argument to simplify the case. Let $\phi \in C^{\infty}_0 ( N_{k,\RR}(R) )$.  We lift $\phi$ to a region covering $N_{k,\RR}(R)$
\begin{equation*}
\overline{N_{k,\RR}(R)} \equiv \{ (t,u,z) \in (0,1/R) \times \R^{n-k} \times \overline{T^k_\RR} \: : \: |u|^2 < 3/4 \}
\end{equation*}
equipped with the pulled back metric via $1 \times f$. Then note
\begin{align*}
\langle \Delta_{\overline{N_{k,\RR}(R)}} (1 \times f)^* \phi , (1 \times f)^*  \phi \rangle_{\overline{N_{k,\RR}(R)}} &= | h(\pi_1(T^k_\RR)) | \times \langle \Delta_{N_{k,\RR}(R)}\phi , \phi \rangle_{N_{k,\RR}(R)} ,\\
 \| (1 \times f)^*  \phi \|^2_{\overline{N_{k,\RR}(R)}}  &= | h(\pi_1(T^k_\RR)) | \times  \| \phi \|^2_{N_{k,\RR}(R)}  ,
\end{align*}
and $(1 \times f)^* \phi \in C^{\infty}_0(\overline{N_{k,\RR}(R)})$ so it is sufficient to prove the result when the cross section is a trivial bundle over a torus. Then assume that $F_{k,\RR} \cong \R^{n-k} \times T^k_\RR$, so that
\begin{equation*}
N_{k,\RR}(R) = \{ (t,u,z) \in (0,1/R) \times \R^{n-k} \times \overline{T^k_\RR} \: : \: |u|^2 < 3/4 \}.
\end{equation*}
Now we introduce the change of coordinates $t = e^{-\tau}$, and then conjugate the Laplacian by the function $|g|^{1/4} = e^{n\tau / 2}(e^{-2\tau} + |u|^2)^{k/2}$. Clearly $\phi \mapsto |g|^{1/4}\phi$ preserves ${C^{\infty}_0 ( N_{k,\RR}(R) )}$. The conjugated Laplacian acts on the $L^2$ space defined by the  volume element $d\tau \wedge du \wedge dz$. One has for $L = |g|^{1/4} \Delta |g|^{-1/4}$ in coordinates $(\tau,u,z)$
\begin{equation}\label{eq:LCUSP}
L = - \partial_\tau^2 + e^{-2\tau}\Delta^{\mathrm{Euclidean}}_u + \frac{e^{-2\tau}}{(e^{-2\tau} + |u|^2)^2}\Delta^{\mathrm{Flat Toroidal}}_z + \frac{n^2}{4}.
\end{equation} The main part of this calculation appears in \cite[eq. 5.2]{GUI09}. This completes the proof as the first 3 terms can be easily verified nonnegative on $C^{\infty}_0 ( N_{k,\RR}(R) )$ with the product measure.
\end{proof}
As remarked before the reader can skip the next Lemma if they are interested only in the case $n=2$. For the remaining cases the idea is that functions on a flat bundle with nondiscrete holonomy are equivalent to functions on a bundle with discrete holonomy where the previous Lemma can be applied. Moreover we can find finite holonomy bundles 'arbitrarily close' to the original, and the action of the Laplacian is continuous in some sense with respect to this approximation. This perturbation argument is due to Mazzeo and Phillips as they use it in \cite[Lemma 5.12]{MP90}.
\begin{lemma}\label{cusppos} 
In any cuspidal neighbourhood $N_{k,\RR}(R)$,
\begin{equation}
\Delta_{X(\RR)}\lvert_{C^{\infty}_0 ( N_{k,\RR}(R) )} \geq n^2 / 4.
\end{equation}
\end{lemma}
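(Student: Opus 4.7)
The plan is to reduce the general case to the finite-holonomy case treated in Lemma \ref{cusppos1} by a Mazzeo--Phillips style approximation argument. The obstruction in the general case is that the holonomy representation $h: \pi_1(T^k_\RR) \to \mathrm{O}(n-k)$ twisting the flat vector bundle $F_{k,\RR}$ over the flat base $B_{k,\RR}$ may have dense (infinite) image; consequently, there is no finite covering of $N_{k,\RR}(R)$ on which the bundle becomes trivial, and the conjugated-Laplacian splitting \eqref{eq:LCUSP} cannot be globalized by lifting to a compact covering torus as in the proof of Lemma \ref{cusppos1}.

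The first step is to approximate $h$ by homomorphisms $h_\varepsilon: \pi_1(T^k_\RR) \to \mathrm{O}(n-k)$ with finite image, converging pointwise to $h$. Because $\pi_1(T^k_\RR) \cong \Z^k$ is generated by $k$ commuting elements, $h$ is determined by a commuting tuple of orthogonal matrices $(A_1,\ldots,A_k)$ generating an abelian subgroup whose closure is a compact abelian Lie subgroup of $\mathrm{O}(n-k)$; torsion elements are dense in such a group, so we may choose commuting finite-order $(A_1^\varepsilon,\ldots,A_k^\varepsilon)$ approximating $(A_1,\ldots,A_k)$. Replacing the rotational part of the Bieberbach action of $\Lambda(\RR)_q$ on $\R^n$ by $h_\varepsilon$, keeping the translation lattice $\Lambda(\RR)_a$ fixed, yields a perturbed quotient $F_{k,\RR}^\varepsilon$ carrying a hyperbolic cuspidal metric $g_\varepsilon$ of the same local form $x^{-2}(dx^2 + dy^2 + dz^2)$, but now with finite holonomy so that Lemma \ref{cusppos1} applies on the perturbed space.

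The second step is to transfer a given $\phi \in C^\infty_0(N_{k,\RR}(R))$ to functions $\phi_\varepsilon$ on the perturbed spaces. Since $\phi$ has compact support, lifting to the universal cover $(0,1/R) \times \R^n$ and then quotienting by the perturbed action gives $\phi_\varepsilon$ in $C^\infty_0(N_{k,\RR}^\varepsilon(R))$ for all $\varepsilon$ small enough (the support pulls back to a bounded region in $\R^n$ whose projection to the perturbed quotient is injective on the support). By Lemma \ref{cusppos1} applied to each $g_\varepsilon$ we obtain
\begin{equation*}
\langle \Delta_{g_\varepsilon} \phi_\varepsilon, \phi_\varepsilon \rangle \geq \tfrac{n^2}{4}\, \|\phi_\varepsilon\|^2.
\end{equation*}
One then passes to the limit $\varepsilon \to 0$: because the coefficients of the conjugated operator in \eqref{eq:LCUSP} depend only on the coordinates $(\tau, u, z)$ on the universal cover and not on the group action, and the volume element likewise pulls back identically, both sides converge to the corresponding quantities for $\phi$ and $\Delta_{X(\RR)}$.

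The main obstacle will be the careful bookkeeping for the transfer $\phi \mapsto \phi_\varepsilon$: one must verify that as $h_\varepsilon \to h$ the identifications respect the Riemannian volumes and the Dirichlet forms on the supports, so that the quadratic-form inequality survives the limit. This is the technical content one imports from the perturbation argument of Mazzeo and Phillips in \cite[Lemma 5.12]{MP90}; its applicability here rests on the fact that all the approximating geometries share a common universal cover $(\R^+)_x \times \R^n$ with the same hyperbolic metric, so the perturbation affects only the deck-group identifications and not the pointwise Riemannian data on compact sets.
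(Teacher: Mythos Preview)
Your overall strategy is exactly the paper's: perturb the holonomy to one with finite image, apply Lemma \ref{cusppos1} on the perturbed cusp, and pass to the limit using the Mazzeo--Phillips argument. So the approach is correct and matches the paper.

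There is, however, a genuine gap in your transfer step. You write that $\phi$ can be lifted to the universal cover $(0,1/R)\times\R^n$ and then ``the support pulls back to a bounded region in $\R^n$ whose projection to the perturbed quotient is injective on the support.'' This is false: the base $B_{k,\RR}$ is compact, so the support of $\phi$ typically surjects onto $B_{k,\RR}$, and its lift to $\R^n$ is a $\Lambda(\RR)_q$-periodic (hence unbounded) set. There is no way to ``re-quotient'' that lift by the perturbed action and obtain a well-defined, let alone smooth, compactly supported function.

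What the paper (and Mazzeo--Phillips) actually do is construct an explicit \emph{intertwining diffeomorphism} of the universal cover. In the model case $n=3$, $k=1$, with generator $\gamma$ acting by rotation through an irrational angle $\theta$ and unit translation in $z$, one sets
\[
p_\eta(u,z)=\bigl(R_{\eta z}\,u,\;z\bigr),
\]
where $R_\alpha$ is rotation through $\alpha$. A direct computation gives $p_\eta\,\gamma\,p_\eta^{-1}$ acting by rotation through $\theta+\eta$ and the same translation; choosing $\eta$ with $\theta+\eta$ rational makes the conjugated action have finite holonomy. Because $p_\eta$ conjugates the two deck actions, it \emph{descends} to a bundle diffeomorphism $F_k\to F_k^\eta$, and $\phi_\eta:=(p_\eta)_*\phi$ is a genuine element of $C^\infty_0(N^\eta_k(R))$. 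One then checks that $p_\eta^*\Delta_{N^\eta_k(R)}(p_\eta)_*\to\Delta_{N_k(R)}$ in the quadratic-form sense on fixed $\phi$ as $\eta\to 0$ (the coefficients of the conjugated operator pick up $O(\eta)$ error terms from differentiating $R_{\eta z}$), while $\|(p_\eta)_*\phi\|=\|\phi\|$ since $p_\eta$ is volume-preserving. This is the missing ingredient in your second step; with it, the rest of your outline goes through.
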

\begin{proof}
We assume now that the holonomy representation has infinite image, or we are done by Lemma \ref{cusppos1}. We drop the $\RR$ dependence which does not matter as we only deal with one level at a time. By the same lifting argument as before it is sufficient to consider the case when $B_{k} = T^k$ so the cross section is a flat bundle $F_{k}$ over a torus. Then conjugating the parabolic fixed point to $\infty$ in the upper half space model $\{ ( x, u, z ) \in \R^+ \times \R^{n-k} \times \R^{k} \}$ we have $\Lambda_\infty = \Lambda_a = \pi_1(T^k)$ a free abelian group of rank $k$ and preserving the Euclidean horosphere $\{ x = 1 \}$ which we take as a cover. The image of the holonomy representation is commutative so each element in the image has the same invariant subspaces. For simplicity we proceed as though $n = 3$ and $k = 1$, so that $\pi_1(T^1)$ is infinite cyclic with generator $\g$. By hypothesis (and further conjugating if necessary) $\g$ acts on the plane $\{ ( x, u, z ) \in \R^+ \times \R^2 \times \R : x = 1 \}$ by
\begin{equation*}
\g : (u, z) \mapsto  \left( \left( \begin{array}{rl}
      \cos \theta & \sin\theta\\
      -\sin \theta & \cos \theta
    \end{array}\right) u , z + 1 \right)
\end{equation*}
with $\theta$ irrational. We consider the perturbing map 
\begin{equation*}
p_\eta : (u ,z)  \mapsto \left( \left( \begin{array}{rl}
      \cos (\eta z) & \sin(\eta z)\\
      -\sin (\eta z) & \cos (\eta z)
    \end{array}\right) u , z \right)
\end{equation*}
where $\eta + \theta$ is rational and $\eta$ will be chosen arbitrarily small in what follows. We compute
\begin{equation*}
p_\eta \g p_\eta^{-1} (u,z) = \left( \left( \begin{array}{rl}
      \cos (\theta + \eta ) & \sin(\theta + \eta )\\
      -\sin (\theta + \eta ) & \cos (\theta + \eta )
    \end{array}\right) u , z + 1 \right).
\end{equation*}
This implies $p_\eta$ descends to a bundle isomorphism from $F_{k} \to F_k^\eta$ where $F_k^\eta$ is the total space of a bundle over $T^1$ with rational holonomy. It also induces a map (also denoted $p_\eta$) to a new cuspidal region $N^\eta_k(R)$ in the obvious way. Now take $\phi \in  C^{\infty}_0 ( N_{k}(R) )$. It is not hard to show\footnote{Something very similar appears in the proof of \cite[Lemma 5.12]{MP90}.} that
\begin{equation}\label{eq:conv}
\langle \Delta_{N^\eta_k(R)} p_{\eta *} \phi , p_{\eta *} \phi \rangle_{L^2(N_k^\eta(R))} = \langle p^*_\eta \Delta_{N^\eta_k(R)} p_{\eta *} \phi , \phi \rangle_{L^2(N_k(R))} \to \langle  \Delta_{N_k(R)}  \phi , \phi \rangle_{L^2(N_k(R))}
\end{equation}
as $\eta \to 0$. We have also
\begin{equation*}
\langle \Delta_{N^\eta_k(R)} p_{\eta *} \phi , p_{\eta *} \phi \rangle_{L^2(N_k^\eta(R))} \geq n^2/4 \|  p_{\eta *} \phi \|^2_{L^2(N_k^\eta)(R)} = n^2/4  \|  \phi \|^2_{L^2(N_k(R))}
\end{equation*} 
by Lemma \ref{cusppos1}. Then choosing a sequence of $\eta_i \to 0$ with $\theta + \eta_i$ rational in equation \eqref{eq:conv} we have
\begin{equation*} 
\langle  \Delta_{N_k(R)}  \phi , \phi \rangle_{L^2(N_k(R))} \geq n^2/4  \|  \phi \|^2_{L^2(N_k(R))}
\end{equation*}
which is the required bound below. The same idea works in the general case with more 'perturbation directions'.
\end{proof} 

\subsection{Bounds below in regular neighbourhoods}
Let $B_0 \subset \delta \bar{X}$ be the compact manifold with boundary 
\begin{align}
B_0 &= \delta \bar{X} - \bigcup_{k = 1}^{n-1} \{ (u,z) \in \delta \bar{M}_k : |u|^2 < 1/4 \}, \\
 \delta &B_0 = \bigcup_{k = 1}^{n-1} \{ (u,z) \in \delta \bar{M}_k : |u|^2 = 1/4 \} .\nonumber
\end{align}
By Proposition \ref{collarmetric}, after the change of coordinates $\rho = e^{-\tau}$ the hyperbolic metric on $\pi_{\RR}^{-1}((0,1/R) \times B_0)$ is of the form
\begin{equation*}
g = d\tau^2 + e^{2\tau }\pi_\RR^{*} \g(\tau) .
\end{equation*}
Here $\g(\tau) = h(e^{-\tau})$ with $h$ as in Proposition \ref{collarmetric}. The Laplacian on $\pi_{\RR}^{-1}((\ln R , \infty)_\tau \times B_0)$  takes the form (as in \cite[eq. 5.1]{GUI09})
\begin{equation}\label{eq:laplacianform}
\Delta_{X(\RR)} = - \partial_\tau^2 - n\partial_\tau - \frac{1}{2}\pi_{\RR}^*\left(\mathrm{Tr}(\g^{-1}(\tau).\partial_\tau \g )\right)\partial_\tau + e^{-2\tau}\Delta_{\pi_{\RR}^*\g(\tau)} .
\end{equation}
Our next Lemma prepares the way by giving the Laplacian on $\pi_{\RR}^{-1}((\ln R , \infty)_\tau \times B_0)$ as a 'polynomial' in $\partial_\tau$ and $ \nabla_{\pi_{\RR}^*\g(0)}$.
\begin{lemma}
The Laplacian on  $\pi_{\RR}^{-1}((\ln R , \infty)_\tau \times B_0)$ can be written
\begin{align}
\Delta_{X(\RR)} &= - \partial_\tau^2 - n\partial_\tau + e^{-2\tau}\Delta_{\pi_{\RR}^* \g(0)}  \label{eq:laplacianunconjugated} \\
& e^{-\tau} (\pi_{\RR}^{*} f) \partial_\tau + e^{-3\tau} \left( \pi_{\RR}^* a . \nabla_{\pi_{\RR}^*\g(0)} + \mathrm{div}_{\pi_{\RR}^*\g(0)} . \pi_{\RR}^*b  . \nabla_{\pi_{\RR}^*\g(0)} \right), \nonumber
\end{align}
where $f$ (resp. $a$, resp. $b$) is a smooth bounded function (resp. one form, resp. endomorphism of the tangent bundle) on $(\ln R , \infty)_\tau \times B_0$.
\end{lemma}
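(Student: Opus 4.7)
My plan is to return to the expression \eqref{eq:laplacianform} for $\Delta_{X(\RR)}$, pull out the correct exponential prefactors in each of the two $\tau$-dependent coefficients, and then rewrite the resulting remainders in terms of $\g(0)$-data rather than $\g(\tau)$-data. The key inputs are the smoothness of $h(\rho)$ given by Proposition \ref{collarmetric}, the compactness of $B_0$, and the positive-definiteness of $h_0 = h(0)$ on $B_0$; together these guarantee that all metric invariants of $\g(\tau)$ and its inverse extend smoothly to $\tau \to \infty$ and are uniformly bounded.

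For the first-order-in-$\tau$ coefficient I would differentiate $\g(\tau) = h(e^{-\tau})$ to obtain $\partial_\tau \g(\tau) = -e^{-\tau} h'(e^{-\tau})$, extracting an explicit factor of $e^{-\tau}$. Forming $\g^{-1}(\tau)\partial_\tau\g(\tau)$ and taking its trace then gives $e^{-\tau}$ times a smooth bounded function $f(\tau, \cdot)$ on $(\ln R, \infty) \times B_0$, and pulling back by the local isometry $\pi_\RR$ produces the $e^{-\tau}(\pi_\RR^* f)\partial_\tau$ contribution of \eqref{eq:laplacianunconjugated}. For the cross-section piece I would Taylor-expand $h(\rho) = h_0 + \rho\, h_1(\rho)$ with $h_1$ smooth, so that $\g(\tau) = \g(0) + e^{-\tau} h_1(e^{-\tau})$ is a smooth $O(e^{-\tau})$ perturbation of $\g(0)$. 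Since the local-coordinate formula for $\Delta_{\g(\tau)}$ depends polynomially on $\g(\tau)^{-1}$ and the first derivatives of $\g(\tau)$, divided by $\sqrt{\det\g(\tau)}$, this perturbation gives a decomposition $\Delta_{\g(\tau)} = \Delta_{\g(0)} + e^{-\tau} L(\tau)$ for a smooth family $L(\tau)$ of second-order differential operators on $B_0$ with uniformly bounded coefficients in $\tau$.

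Because every $\Delta_{\g(\tau)}$ annihilates constants, so does $L(\tau)$, whence $L(\tau)$ has no zeroth-order part. Any second-order operator on $(B_0, \g(0))$ with no zeroth-order term can be written as $L(\tau)u = a \cdot \nabla_{\g(0)} u + \mathrm{div}_{\g(0)}(b \cdot \nabla_{\g(0)} u)$ by reading the symmetric endomorphism $b$ off the principal symbol and absorbing the residual first-order piece into the $1$-form $a$; smoothness and uniform boundedness of $a, b$ in $\tau$ are inherited from those of $L(\tau)$. Pulling back via $\pi_\RR$, which is a local isometry and hence commutes with $\Delta$, $\nabla$ and $\mathrm{div}$, and multiplying by the $e^{-2\tau}$ factor already sitting in front of the cross-section Laplacian in \eqref{eq:laplacianform}, produces the $e^{-3\tau}(\pi_\RR^* a \cdot \nabla_{\pi_\RR^* \g(0)} + \mathrm{div}_{\pi_\RR^* \g(0)} \cdot \pi_\RR^* b \cdot \nabla_{\pi_\RR^* \g(0)})$ contribution. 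The only step that really needs care is the uniform boundedness of $f$, $a$ and $b$ as $\tau \to \infty$, but this follows at once from the smoothness of $h$ at $\rho = 0$ combined with the positive-definiteness of $h_0$ on the compact set $B_0$, which controls $\g(\tau)^{-1}$ and $(\det \g(\tau))^{-1/2}$ uniformly; everything else is bookkeeping.
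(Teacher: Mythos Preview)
Your proposal is correct and follows essentially the same route as the paper: both extract $e^{-\tau}$ factors from the Taylor expansion of $h(\rho)$ at $\rho=0$, use compactness of $B_0$ and positive-definiteness of $h_0$ to control inverses and determinants, and then organize the cross-sectional remainder as $a\cdot\nabla_{\g(0)} + \mathrm{div}_{\g(0)}\cdot b\cdot\nabla_{\g(0)}$. The only cosmetic difference is that the paper expands $\mathrm{div}_{\g(\tau)}$ and $\nabla_{\g(\tau)}$ separately and composes, whereas you read $b$ off the principal symbol and absorb the residual first-order piece into $a$; both arrive at the same objects with the same bounds.
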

\begin{proof}
 By the smoothness of the family of metrics $h(\rho)$, we can write
\begin{align}
\g^{-1}(\tau) &= (1 + e^{-\tau} b )\g^{-1}(0), \label{al2}\\
 | \g(\tau) |^{1/2}  &=(1 + e^{-\tau} c )| \g(0) |^{1/2}, \label{al3}\\
 | \g(\tau) |^{-1/2} &=  (1 + e^{-\tau} \tilde{c} ) | \g(0) |^{-1/2}  ,  \label{al4} 
\end{align}
with (by multiplying \eqref{al3} and \eqref{al4})
\begin{equation}
c+ \tilde{c} + e^{-\tau}c\tilde{c} \equiv 0. \label{eq:identity}
\end{equation}
The quantities $b$, $c$ and $\tilde{c}$ are smooth bounded 2-tensors and functions respectively on $(\ln R , \infty)_\tau \times B_0$. It follows from the compactness of $B_0$ that any fixed finite number of derivatives of $b$, $c$ and $\tilde{c}$ are smooth and bounded on $(\ln R , \infty)_\tau \times B_0$. The analogous statements hold for $\pi_{\RR}^* \g(\tau)$ by replacing $b$, $c$ and $\tilde{c}$ with their lifts.

We calculate that, using \eqref{al3} and \eqref{al4} and writing $d$ for the exterior derivative on $B_0$,
\begin{align}
\mathrm{div}_{\g(\tau)} &= \mathrm{div}_{\g(0)} + e^{-\tau}(1 + e^{-\tau} \tilde{c}) d (c ) \nonumber \\
&+ e^{-\tau}( c+ \tilde{c}+ e^{-\tau} c\tilde{c}) \mathrm{div}_{\g(0)}\nonumber \\ 
&= \mathrm{div}_{\g(0)} + e^{-\tau} \omega  , \label{eq:div}
\end{align}
where the term on the second line vanished due to \eqref{eq:identity} and $\omega$ is a smooth bounded one form on $(\ln R , \infty)_\tau \times B_0$ (we use boundedness of derivatives of $c$ here). By \eqref{al2},
\begin{equation}\label{eq:grad}
\nabla_{\g(\tau)} = \nabla_{\g(0)} + e^{-\tau} b . \nabla_{\g(0)}.
\end{equation}
Now using that
\begin{equation}
\Delta_{\g(\tau)} = \mathrm{div}_{\g(\tau)} . \nabla_{\g(\tau)}
\end{equation}
our previous formulae \eqref{eq:div} and \eqref{eq:grad} give
\begin{align}
\Delta_{\g(\tau)} &=  \left (\mathrm{div}_{\g(0)} + e^{-\tau} \omega  \right). \left(  \nabla_{\g(0)} + e^{-\tau} b . \nabla_{\g(0)} \right) \\
&=\Delta_{\g(0)} + e^{-\tau} \omega . \left(1 + e^{-\tau}b \right) . \nabla_{\g(0)} + e^{-\tau}  \mathrm{div}_{\g(0)}.  b . \nabla_{\g(0)} \\
&= \Delta_{\g(0)} + e^{-\tau} \left( a . \nabla_{\g(0)} +  \mathrm{div}_{\g(0)}  . b   .\nabla_{\g(0)} \right) ,
\end{align}
where $a$ (resp. $b$) is a smooth bounded one form (resp. endomorphism of the tangent bundle) on $(\ln R , \infty)_\tau \times B_0$. To get the analogous result at level $\RR$ we can repeat the argument and note that all the quantities which appear are the lifts of their counterparts in the previous discussion. This yields
\begin{equation}\label{eq:crosssectionperturbation}
\Delta_{\pi_{\RR}^* \g(\tau)} =\Delta_{\pi_{\RR}^* \g(0)} +  e^{-\tau} \left( \pi_{\RR}^* a . \nabla_{\pi_{\RR}^*\g(0)} + \mathrm{div}_{\pi_{\RR}^*\g(0)} . \pi_{\RR}^*b  . \nabla_{\pi_{\RR}^*\g(0)} \right) .
\end{equation}

Similar arguments show that
\begin{equation}\label{eq:traceterm}
-\frac{1}{2}\pi_{\RR}^*\left(\mathrm{Tr}(\g^{-1}(\tau).\partial_\tau \g )\right) = e^{-\tau} \pi_{\RR}^{*} f
\end{equation}
where $f$ is a smooth bounded function. Substituting \eqref{eq:crosssectionperturbation} and \eqref{eq:traceterm} into \eqref{eq:laplacianform} gives the desired expression \eqref{eq:laplacianunconjugated}.
\end{proof}

To proceed, we conjugate the Laplacian by the function 
\begin{equation*}
G_{\RR} \equiv \pi_{\RR}^*(|g|/|\g(0)|)^{1/4}
\end{equation*}
 so as to act on the product space associated to the volume element $d \tau \wedge \mu_{\pi_{\RR}^* \g(0)}$. The conjugated Laplacian 
\begin{align*}
L(\RR;R) &: C^{\infty}_0(\pi_\RR^{-1} ((\ln R , \infty) \times B_0)) \to C^{\infty}_0(\pi_\RR^{-1} ((\ln R , \infty) \times B_0)) \\
L(\RR;R) &\equiv G_\RR \Delta_{X(\RR)} G_\RR^{-1} 
\end{align*}
takes the form
\begin{equation}
L(\RR;R) = L_0(\RR; R) + E(\RR; R)
\end{equation}
where 
\begin{equation}\label{eq:L0FORM}
L_0(\RR ; R) = -\partial_\tau^2 + e^{-2\tau} \Delta_{\pi_\RR^*\g(0)} + n^2/4 .
\end{equation}
The operator $\Delta_{\g(0)} $ refers to the Laplacian on $(B_0,\g(0))$ with Dirichlet boundary conditions. In the next Lemma we compute the error term $E(\RR; R)$.
\begin{lemma}\label{errorform}In the region  $\pi_\RR^{-1} ((\ln R , \infty)_\tau \times B_0)$ the error term $ E(\RR;R)$ is given by
\begin{equation}
 E(\RR;R)= e^{-\tau} A_\RR \partial_\tau + e^{-3\tau} B_\RR . \nabla_{\pi_\RR^* \g(0)}  + e^{-3\tau} \mathrm{div}_{\pi^*_\RR \g(0) } . C_\RR . \nabla_{\pi_\RR^* \g(0)} + e^{-\tau} D_\RR
\end{equation}
where $A_\RR$ and $D_\RR$ are $\RR$-uniformly bounded functions, $B_\RR$ is an $\RR$-uniformly bounded one form, and $C_\RR$ is an $\RR$-uniformly bounded endomorphism of the tangent bundle.
\end{lemma}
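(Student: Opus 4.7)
This lemma is a direct computation of the conjugate $L(\RR;R) = G_\RR \Delta_{X(\RR)} G_\RR^{-1}$ using the expansion of $\Delta_{X(\RR)}$ from \eqref{eq:laplacianunconjugated}. My plan is to factor $G_\RR$ as a product of a pure exponential in $\tau$ and the pullback of a bounded function on the compact base, then conjugate by each factor in turn, and read off the corrections.

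First I would set $G_\RR = e^{n\tau/2}\Phi_\RR$ with $\Phi_\RR = \pi_\RR^*\phi$ and $\phi = (|\g(\tau)|/|\g(0)|)^{1/4}$. From \eqref{al3} we have $\phi = 1 + e^{-\tau}c_0$ on the cylinder $(\ln R,\infty)_\tau \times B_0$ for a smooth function $c_0$ all of whose derivatives are bounded, by compactness of $B_0$ and the smooth dependence of $h$ on $\rho$. Since $\pi_\RR$ is a local isometry from $(X(\RR),\pi_\RR^*\g(0))$ to $(X,\g(0))$, the quantities $\Phi_\RR^{-1}(\partial_\tau \Phi_\RR)$, $\Phi_\RR^{-1}(\nabla_{\pi_\RR^*\g(0)}\Phi_\RR)$, and $\Phi_\RR(\Delta_{\pi_\RR^*\g(0)}\Phi_\RR^{-1})$ are $\pi_\RR^*$-lifts of bounded objects on the base, each of size $e^{-\tau}$, with $\RR$-uniform bounds.

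I would then carry out the two conjugations. Using $e^{n\tau/2}\partial_\tau e^{-n\tau/2} = \partial_\tau - n/2$, a short computation gives
\begin{equation*}
e^{n\tau/2}\bigl(-\partial_\tau^2 - n\partial_\tau\bigr)e^{-n\tau/2} = -\partial_\tau^2 + n^2/4,
\end{equation*}
which supplies the constant term of $L_0(\RR;R)$. Conjugation by $\Phi_\RR$ is handled via the identities $\Phi_\RR \partial_\tau \Phi_\RR^{-1} = \partial_\tau - \Phi_\RR^{-1}(\partial_\tau \Phi_\RR)$ and
\begin{equation*}
\Phi_\RR \Delta_{\pi_\RR^*\g(0)} \Phi_\RR^{-1} = \Delta_{\pi_\RR^*\g(0)} + 2\Phi_\RR^{-1}(\nabla_{\pi_\RR^*\g(0)}\Phi_\RR)\cdot\nabla_{\pi_\RR^*\g(0)} + \Phi_\RR\bigl(\Delta_{\pi_\RR^*\g(0)}\Phi_\RR^{-1}\bigr),
\end{equation*}
squaring the first to handle $\partial_\tau^2$. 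By the previous paragraph each correction term is of size $e^{-\tau}$ with $\RR$-uniform coefficient.

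Assembling the pieces: conjugation of $-\partial_\tau^2 - n\partial_\tau$ produces $-\partial_\tau^2 + n^2/4$ plus terms of the form $e^{-\tau}A^{(1)}_\RR \partial_\tau + e^{-\tau}D^{(1)}_\RR$; conjugation of $e^{-2\tau}\Delta_{\pi_\RR^*\g(0)}$ returns itself plus $e^{-3\tau}B^{(1)}_\RR\cdot\nabla_{\pi_\RR^*\g(0)} + e^{-3\tau}D^{(2)}_\RR$; the $e^{-\tau}(\pi_\RR^*f)\partial_\tau$ summand becomes $e^{-\tau}A^{(2)}_\RR\partial_\tau + e^{-\tau}D^{(3)}_\RR$; and the two $e^{-3\tau}$ transverse terms of \eqref{eq:laplacianunconjugated} retain their shape $e^{-3\tau}B^{(2)}_\RR\cdot\nabla_{\pi_\RR^*\g(0)} + e^{-3\tau}\mathrm{div}_{\pi_\RR^*\g(0)}\cdot C_\RR \cdot \nabla_{\pi_\RR^*\g(0)}$ up to corrections of order $e^{-4\tau}\cdot(\text{bounded})$, which are absorbed into $e^{-\tau}D_\RR$ using that $e^{-3\tau}$ is bounded on $\tau > \ln R$. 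Subtracting $L_0(\RR;R)$ leaves $E(\RR;R)$ in the stated form. The only real obstacle is careful bookkeeping of the $e^{-\tau},e^{-2\tau},e^{-3\tau}$ decay rates and the verification that every functional coefficient appearing in the computation is a $\pi_\RR^*$-pullback from the fixed compact base $(\ln R,\infty)_\tau\times B_0$, which is exactly what yields the $\RR$-uniform boundedness of $A_\RR, B_\RR, C_\RR, D_\RR$.
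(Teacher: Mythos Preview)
Your proposal is correct and follows essentially the same route as the paper: a direct computation of $G_\RR \Delta_{X(\RR)} G_\RR^{-1}$ term by term from \eqref{eq:laplacianunconjugated}, using that every coefficient arising is a $\pi_\RR^*$-lift from the fixed compact cylinder $(\ln R,\infty)_\tau\times B_0$ to get the $\RR$-uniform bounds. The only cosmetic difference is that you factor $G_\RR = e^{n\tau/2}\Phi_\RR$ and conjugate in two steps, whereas the paper writes $G^{-1} = e^{-n\tau/2}(1+e^{-\tau}J')$ and conjugates in one pass; both produce the same collection of error terms.

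Two small points of bookkeeping to clean up: the sign in your displayed identity for $\Phi_\RR \Delta_{\pi_\RR^*\g(0)} \Phi_\RR^{-1}$ should be $-2\Phi_\RR^{-1}(\nabla\Phi_\RR)\cdot\nabla$ (equivalently $+2\Phi_\RR(\nabla\Phi_\RR^{-1})\cdot\nabla$), though this is absorbed into $B_\RR$ anyway; and the $e^{-4\tau}$ corrections from conjugating the $e^{-3\tau}\mathrm{div}\cdot b\cdot\nabla$ term include a first-order piece $e^{-4\tau}\Omega\cdot\nabla$ which goes into $B_\RR$, not $D_\RR$. Neither affects the conclusion.
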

\begin{proof} Our starting point is equation \eqref{eq:laplacianunconjugated}. We will compute $E((1); R)$ at full level and compare the calculation to that of general level. Throughout this calculation we will accumulate error terms, we will always use $E_i$ to denote a smooth bounded function on $(\ln R , \infty) \times B_0$ and $\Omega_i$ to denote a smooth bounded one form on $(\ln R , \infty) \times B_0$.
We write $G = G_{(1)}$ and note
\begin{equation}
G = e^{\frac{n}{2}\tau} (1 + e^{-\tau}J) ,
\end{equation}
and
\begin{equation}\label{eq:inverseexpansion}
G^{-1} = e^{-\frac{n}{2}\tau}(1 + e^{-\tau}J'),
\end{equation}
where $J, J' \in C^\infty( (\ln R , \infty) \times B_0 )$ have bounded derivatives. We will conjugate the terms in equation \eqref{eq:laplacianunconjugated} in turn.
Firstly we calculate
\begin{equation}\label{eq:firstterm}
G \left( - \partial^2_\tau - n\partial_\tau \right) G^{-1} = -\partial_\tau^2 - \left( n + 2 G\partial_\tau(G^{-1}) \right)\partial_\tau - G\left(\partial_\tau^2(G^{-1}) + n \partial_\tau(G^{-1} ) \right).
\end{equation}
From \eqref{eq:inverseexpansion} we can write
\begin{equation}\label{eq:Gderiv1}
\partial_\tau(G^{-1}) = -\frac{n}{2}G^{-1} + e^{-(\frac{n}{2}+1)\tau}\left(\partial_\tau J' - J' \right).
\end{equation}
and taking another derivative gives
\begin{equation}\label{eq:Gderiv2}
\partial_\tau^2(G^{-1}) = \frac{n^2}{4}G^{-1} + e^{-(\frac{n}{2} + 1)\tau}\left(\partial_\tau^2 J' - (n + 2)\partial_\tau J' + (n+1)J' \right).
\end{equation}
Using \eqref{eq:Gderiv1} and \eqref{eq:Gderiv2} in \eqref{eq:firstterm} gives
\begin{align*}
G \left( - \partial^2_\tau - n\partial_\tau \right) G^{-1} &=  -\partial_\tau^2 - 2Ge^{-(\frac{n}{2} + 1)}\left(\partial_\tau J' - J' \right)\partial_\tau \\
&+ \frac{n^2}{4} -  Ge^{-(\frac{n}{2} + 1)\tau}\left(\partial_\tau^2 J' - 2\partial_\tau J' + J'\right).
\end{align*}
This can be written
\begin{equation}
G \left( - \partial^2_\tau - n\partial_\tau \right) G^{-1}  =  -\partial_\tau^2 + \frac{n^2}{4} + e^{-\tau}E_1 \partial_\tau + e^{-\tau} E_2,
\end{equation}
where boundedness of $E_1$ and $E_2$ follows from boundedness of derivatives of $J'$. Similarly the '$f$' term in \eqref{eq:laplacianunconjugated} after conjugation becomes
\begin{equation}
G \left( e^{-\tau}  f \partial_\tau \right) G^{-1} =  e^{-\tau} E_3 \partial_\tau +  e^{-\tau} E_4 .
\end{equation}
Therefore the contribution to the conjugated Laplacian from terms with $\tau$ derivatives is
\begin{equation}
G \left( - \partial^2_\tau - n\partial_\tau - \frac{1}{2}e^{-\tau}  f \partial_\tau \right) G^{-1} =  -\partial_\tau^2 + \frac{n^2}{4} + e^{-\tau}E_5 \partial_\tau + e^{-\tau} E_6 .
\end{equation}
Now we calculate
\begin{align}
G \left(e^{-2 \tau} \Delta_{\g(0)} \right)G^{-1} &= e^{-2\tau}\Delta_{\g(0)} + e^{-3\tau}(1 + e^{-\tau}J) \left( 2(\nabla_{\g(0)}J').\nabla_{\g(0)} + \Delta_{\g(0)}(J')  \right) \\
&= e^{-2\tau}\Delta_{\g(0)} + e^{-3\tau} \Omega_1 . \nabla_{\g(0)}  + e^{-3\tau} E_7 \nonumber,
\end{align}
and
\begin{align}
G \left( e^{-3\tau} a . \nabla_{\g(0)} \right) G^{-1} &= e^{-3\tau} a . \nabla_{\g(0)} + e^{-4\tau}(1 + e^{-\tau}J) a. (\nabla_{\g(0)} J') \\
&=  e^{-3\tau} \Omega_2 . \nabla_{\g(0)} + e^{-4\tau} E_8 \nonumber.
\end{align}
The final term becomes after conjugation
\begin{align}
G \left( e^{-3\tau} \mathrm{div}_{\g(0)} . b . \nabla_{\g(0)} \right) G^{-1}  &= e^{-3\tau}  \mathrm{div}_{\g(0)} . b . \nabla_{\g(0)} + e^{-4 \tau}(1 + e^{-\tau} J)(\nabla_{\g(0)}J') . b . \nabla_{\g(0)} \\
&+e^{-4 \tau}(1 + e^{-\tau} J) ( b .  \nabla_{\g(0)} J') . \nabla_{\g(0)} \nonumber \\
&+ e^{-4\tau}(1 + e^{-\tau} J) \mathrm{div}_{\g(0)} . b . (\nabla_{\g(0)} J') \nonumber \\
&=  e^{-3\tau}  \mathrm{div}_{\g(0)} . b . \nabla_{\g(0)} + e^{-4\tau}\Omega_3 . \nabla_{\g(0)} + e^{-4\tau} E_9. \nonumber
\end{align}
In total therefore we have
\begin{align}
L((1),R) = G\Delta_{X}G^{-1} &=   -\partial_\tau^2 + \frac{n^2}{4} + e^{-2\tau}\Delta_{\g(0)} \\
&+ e^{-\tau} A \partial_\tau + e^{-3\tau} B . \nabla_{\pi_\RR^* \g(0)} \nonumber\\
& + e^{-3\tau} \mathrm{div}_{\pi^*_\RR \g(0) } . C . \nabla_{\pi_\RR^* \g(0)} + e^{-\tau} D \nonumber
\end{align}
for smooth bounded $A$, $B$, $C$ and $D$. The result at a general level $\RR$ holds with $A$ replaced with $A_\RR \equiv \pi_\RR^*(A)$ and similarly for $B$, $C$ and $D$: this follows by repeating the calculation and noting that all the inputs are lifts from level $(1)$. The output error terms are then also lifts and the $\RR$-uniform bounds follow.
\end{proof}

The following Lemma claims that the errors at each level $\RR$ can be treated as perturbations \textit{simultaneously}, by decreasing the size of the neighbourhood at infinity if necessary. A similar Lemma appears in \cite[Lemma 4.1]{PER87}.

\begin{lemma}\label{relbound}
For all $\e > 0$, we can choose $R_0 = R_0(\e)$ large enough so that for all $R > R_0$, and all $f \in C^{\infty}_0 ( \pi_{\RR}^{-1}((\ln R , \infty)_\tau \times B_0 ) )$,
\begin{equation}
 | \langle E( \RR ; R) f , f \rangle | < \e| \langle L_0(\RR ; R) f , f \rangle |.
\end{equation}
\end{lemma}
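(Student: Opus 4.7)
The plan is to prove a uniform relative bound of the form $|\langle E(\RR;R)f,f\rangle| \leq (C/R)\,\langle L_0(\RR;R)f,f\rangle$ for some constant $C$ independent of $\RR$, so that choosing $R_0 > C/\e$ will do. The smallness will come from the fact that $e^{-\tau}\leq 1/R$ on the support of $f$, while the $\RR$-uniformity will come from Lemma \ref{errorform}, which tells us that the coefficients $A_\RR$, $B_\RR$, $C_\RR$, $D_\RR$ are lifts from the compact piece $(\ln R,\infty)\times B_0$ and hence have $\RR$-uniform $L^\infty$-norms.

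First I would integrate $L_0(\RR;R)$ by parts against $f$ in $L^2$ with respect to the product measure $d\tau\wedge\mu_{\pi_\RR^*\g(0)}$. Since $f$ is compactly supported in the interior of $\pi_\RR^{-1}((\ln R,\infty)\times B_0)$, no boundary contributions appear, and using that $e^{-2\tau}$ commutes with $\Delta_{\pi_\RR^*\g(0)}$ one obtains the identity
\begin{equation*}
\langle L_0(\RR;R)f,f\rangle \;=\; \|\partial_\tau f\|^2 \;+\; \|e^{-\tau}\nabla_{\pi_\RR^*\g(0)} f\|^2 \;+\; \tfrac{n^2}{4}\|f\|^2,
\end{equation*}
which is a sum of three non-negative quantities that we can play against.

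Next I would estimate each of the four pieces of $E(\RR;R)$ appearing in Lemma \ref{errorform}. For the zeroth-order term $e^{-\tau}D_\RR$, one simply uses $e^{-\tau}\leq 1/R$ and the $\|f\|^2$ contribution from $L_0$. For $e^{-\tau}A_\RR\partial_\tau$, Cauchy--Schwarz combined with $e^{-\tau}\leq 1/R$ and the AM-GM inequality yields a bound by $(C/R)(\|\partial_\tau f\|^2 + \|f\|^2)$. For the first-order cross-section term $e^{-3\tau}B_\RR.\nabla_{\pi_\RR^*\g(0)}$ I would split $e^{-3\tau}=e^{-\tau}\cdot e^{-2\tau}$ and apply Cauchy--Schwarz in the form $\int e^{-3\tau}|\nabla f||f| \leq \|e^{-\tau}\nabla f\|\cdot\|e^{-2\tau}f\|$, absorbing the extra $e^{-2\tau}\leq 1/R^2$ factor into a uniform $C/R^2$. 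For the second-order term $e^{-3\tau}\,\mathrm{div}_{\pi_\RR^*\g(0)}.C_\RR.\nabla_{\pi_\RR^*\g(0)}$, integration by parts in the cross-section directions (again using compact support) gives $-\int e^{-3\tau}\langle C_\RR\nabla f,\nabla f\rangle_{\pi_\RR^*\g(0)}$, which is bounded by $\|C_\RR\|_\infty\cdot(1/R)\cdot\|e^{-\tau}\nabla f\|^2$. Adding up and bounding by $\langle L_0 f,f\rangle$ produces the required estimate.

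The only delicate point is ensuring that all constants really are $\RR$-uniform; this is exactly what Lemma \ref{errorform} provides, since each of $A_\RR, B_\RR, C_\RR, D_\RR$ is the lift of a smooth bounded object on the compact region $(\ln R_0,\infty)\times B_0$, so their supremum norms do not depend on $\RR$. I do not anticipate a genuine obstacle: the lemma is a standard relative-boundedness estimate whose only non-trivial ingredients are (i) the integration-by-parts identity for $L_0$, which is clean because $f\in C_c^\infty$, and (ii) the $\RR$-uniform control of the error coefficients, which is exactly the content of the previous lemma.
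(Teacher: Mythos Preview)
Your proposal is correct and follows essentially the same route as the paper: both proofs rest on the integration-by-parts identity $\langle L_0 f,f\rangle = \|\partial_\tau f\|^2 + \|e^{-\tau}\nabla_{\pi_\RR^*\g(0)} f\|^2 + \tfrac{n^2}{4}\|f\|^2$ and then bound each term of $E(\RR;R)$ from Lemma~\ref{errorform} against these three pieces, using $e^{-\tau}\le 1/R$ on the support of $f$ together with the $\RR$-uniform $L^\infty$ bounds on $A_\RR,B_\RR,C_\RR,D_\RR$. The paper only spells out the second-order term (via the same integration by parts you describe) and dismisses the remaining terms as similar, so your treatment is in fact slightly more explicit.
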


\begin{proof}
The proof will follow from the identity on $ C^{\infty}_0 ( \pi_{\RR}^{-1}((\ln R , \infty)_\tau \times B_0 ) )$
\begin{equation}\label{eq:relativecontrol}
\langle L_0(\RR;R) f , f \rangle = \| \partial_\tau f \|^2 + \| e^{-\tau} \nabla_{\pi_\RR^* \g(0)}  f \|^2 + n^2/4 \|f\|^2.
\end{equation}
From Lemma \ref{errorform}, $E(\RR;R)$ can be written 
\begin{equation*}
E(\RR;R) = e^{-\tau} A_\RR \partial_\tau + e^{-3\tau} B_\RR . \nabla_{\pi_\RR^* \g(0)}  + e^{-3\tau} \mathrm{div}_{\pi^*_\RR \g(0) } . C_\RR . \nabla_{\pi_\RR^* \g(0)} + e^{-\tau} D_\RR,
\end{equation*}
with $A_\RR , D_\RR$, $\RR$-uniformly bounded functions, $B_\RR$ a vector field with $\RR$-uniform bound and $C_\RR$ representing an endomorphism of the tangent bundle. The norm of $C_\RR$ is $\RR$-uniformly bounded when considered as a function. Each of the terms occurring in $\langle E(\RR;R)f,f \rangle$ can be controlled by \eqref{eq:relativecontrol}, e.g.
\begin{align*}
 |\langle  e^{-3\tau} \mathrm{div}_{\pi^*_\RR \g(0) } . C_\RR . \nabla_{\pi_\RR^* \g(0)} f,f \rangle | &=   |\langle (e^{-\tau} C_\RR) .  e^{-\tau} \nabla_{\pi_\RR^* \g(0)} f, e^{-\tau} \nabla_{\pi_\RR^* \g(0)}  f \rangle |\\
 &\leq \| (e^{-\tau}  C_\RR) .  e^{-\tau} \nabla_{\pi_\RR^* \g(0)} f \| \| e^{-\tau}  \nabla_{\pi_\RR^* \g(0)} f \| \\
 &\leq \| e^{-\tau} C_\RR \|_{op} \: \| e^{-\tau} \nabla_{\pi_\RR^* \g(0)} f \|^2\\
 &\leq  \| e^{-\tau} C_\RR \|_{op}  \langle L_0(\RR;R) f ,f \rangle \\
  &\leq \frac{1}{R_0} \| C_\RR \|_{op}  \langle L_0(\RR;R) f ,f \rangle ,
 \end{align*}
and $\| C_\RR \|_{op}$ is uniformly bounded so by increasing $R_0$ we can sufficiently control the size of this term. The other terms in $\langle E(\RR;R)f,f \rangle$  are estimated similarly.
\end{proof}

\begin{lemma}\label{regularpos}
For any $\eta > 0$ we can choose $R_0 = R_0(\eta)$ large enough so that for all $R > R_0$
\begin{equation}
\Delta_{X(\RR)}\lvert_{C^{\infty}_0 ( \pi_{\RR}^{-1}( (0,1/R) \times B_0  ) )} > n^2 / 4 - \eta 
\end{equation}
uniformly through $\RR$.
\end{lemma}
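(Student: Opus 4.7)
The plan is to work in the rescaled coordinate $\tau = -\log\rho$, so that the region $\pi_\RR^{-1}((0,1/R)\times B_0)$ becomes $\pi_\RR^{-1}((\ln R,\infty)_\tau\times B_0)$, and to exploit the conjugation by $G_\RR$ set up just before Lemma \ref{errorform}. Since $G_\RR = \pi_\RR^*(|g|/|\g(0)|)^{1/4}$ is precisely the weight that makes the push-forward of the hyperbolic measure equal to the product measure $d\tau\wedge \mu_{\pi_\RR^*\g(0)}$, the map $\phi\mapsto f:=G_\RR\phi$ is an isometry $L^2_{hyp}\to L^2_{prod}$ which intertwines $\Delta_{X(\RR)}$ and $L(\RR;R)=L_0(\RR;R)+E(\RR;R)$ on $C_0^\infty(\pi_\RR^{-1}((\ln R,\infty)\times B_0))$. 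So it is enough to show that $L(\RR;R)\ge n^2/4-\eta$ on this product $L^2$-space, uniformly in $\RR$, once $R$ is large enough.

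First I would observe that the principal part $L_0(\RR;R)$ is bounded below by $n^2/4$ on $C^\infty_0$. Indeed, from the explicit expression \eqref{eq:L0FORM},
\begin{equation}
\langle L_0(\RR;R)f,f\rangle = \|\partial_\tau f\|^2 + \|e^{-\tau}\nabla_{\pi_\RR^*\g(0)} f\|^2 + \tfrac{n^2}{4}\|f\|^2,
\end{equation}
using integration by parts in $\tau$ (legitimate because $f$ is compactly supported in $(\ln R,\infty)$) and the fact that $\Delta_{\pi_\RR^*\g(0)}$ is the Laplacian on $B_0$ with Dirichlet conditions on $\delta B_0$, hence non-negative on $C^\infty_0$. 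In particular $\langle L_0(\RR;R)f,f\rangle\ge \tfrac{n^2}{4}\|f\|^2$.

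Second, I would invoke Lemma \ref{relbound}: given $\e>0$, we may choose $R_0(\e)$ so that for all $R>R_0(\e)$ and all $f\in C_0^\infty(\pi_\RR^{-1}((\ln R,\infty)\times B_0))$, uniformly in $\RR$,
\begin{equation}
|\langle E(\RR;R)f,f\rangle| \le \e \, \langle L_0(\RR;R)f,f\rangle.
\end{equation}
Combining with the lower bound on $L_0$ yields
\begin{equation}
\langle L(\RR;R)f,f\rangle \ge (1-\e)\langle L_0(\RR;R)f,f\rangle \ge (1-\e)\tfrac{n^2}{4}\|f\|^2.
\end{equation}
Given $\eta>0$, choose $\e<4\eta/n^2$ so that $(1-\e)n^2/4>n^2/4-\eta$, and take $R_0=R_0(\e)$ from Lemma \ref{relbound}. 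Unconjugating via $\phi=G_\RR^{-1}f$, which carries $C_0^\infty$ to $C_0^\infty$ because $G_\RR$ is smooth and nowhere vanishing on the collar, delivers the desired inequality for $\Delta_{X(\RR)}$.

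I do not expect any serious obstacle here: the hard analytic work has already been done in Lemma \ref{errorform} (identifying the error terms and producing the factors $e^{-\tau}$ and $e^{-3\tau}$ that ensure smallness for large $\tau$) and Lemma \ref{relbound} (the $\RR$-uniform relative boundedness). The only point to treat with care is the $\RR$-uniformity: one must check that the same $R_0(\e)$ works at every level, but this is immediate since all the coefficient functions in $E(\RR;R)$ are lifts from level $(1)$ of bounded tensors on $(\ln R,\infty)\times B_0$, so their operator norms do not depend on $\RR$.
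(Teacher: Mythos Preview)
Your proof is correct and follows essentially the same route as the paper: invoke Lemma \ref{relbound} with $\e = 4\eta/n^2$, use the nonnegativity of the first two terms in \eqref{eq:L0FORM} to get $L_0(\RR;R)\ge n^2/4$, and combine to obtain $\langle L(\RR;R)f,f\rangle > (n^2/4-\eta)\|f\|^2$. Your explicit discussion of the conjugation by $G_\RR$ as an $L^2$-isometry carrying $C_0^\infty$ to $C_0^\infty$ is a bit more careful than the paper's terse ``which gives the result,'' but the argument is the same.
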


\begin{proof} 
By Lemma \ref{relbound} we can choose $R_0(\eta)$ so that 
\begin{equation}
 | \langle E( \RR ; R) f , f \rangle | < \frac{4 \eta}{n^2}| \langle L_0(\RR ; R) f , f \rangle | .
\end{equation}
The first two terms in \eqref{eq:L0FORM} are nonnegative and so we have $L_0(\RR ; R) \geq n^2 / 4$. Then
\begin{align*}
\langle L(\RR;R) f,f \rangle &=  \langle L_0(\RR; R) f, f \rangle + \langle E(\RR; R) f, f \rangle \\
&> (1 - 4\eta / n^2) \langle L_0(\RR; R) f, f \rangle \\
&> (n^2/4 - \eta) \langle f,f \rangle,
\end{align*}
which gives the result. 
\end{proof}
\subsection{Eigenfunction estimates}
Let $\chi : (0,\infty) \to [0,1]$ be a smooth cutoff function such that
\begin{equation*}
  \chi(t) =  \left\{
    \begin{array}{rl}
      1 & \text{if } t  \leq 1/2,\\
   
      0 & \text{if } t \geq 1,
    \end{array} \right.
\end{equation*}
and such that $(1 - \chi^2)^{1/2}$ is also smooth. Then let $\chi_k$, $k = 1 ,\ldots , n$ be a set of cutoff functions defined locally on $\delta \bar{X}$ such that in the boundary coordinates corresponding to $\bar{M}_k$
\begin{align*}
  \chi_k(u,z) &= \chi(\frac{3}{2}|u|^2) , \quad k < n \\
  \chi_n(z) &\equiv 1
\end{align*}
and extended by zero to the rest of the boundary $\delta \bar{X}$. Also define scaled versions of the cutoff
\begin{equation}
\chi_R(\rho) = \chi(R \rho )
\end{equation}
which localizes to smaller regions as $R \to \infty$. For $R > R_0$ large enough view $\chi_R$ as a function of $\rho$ on the collar neighbourhood of infinity $(0,\e)_\rho \times \delta \bar{X}$, and extend by zero to a function on the whole of $X$. Define for notational convenience  
\begin{equation} 
\chi_0 \equiv (1 - \sum_{k=1}^n \chi_k^2)^{1/2},
\end{equation}
\begin{equation}
\chi_{R, i,\infty} \equiv \chi_R \chi_i , \quad i = 0, 1 , \ldots , n ,
\end{equation}
\begin{equation}
\chi_{R, K} \equiv (1 - \chi_R^2)^{1/2} .
\end{equation} 
Then the functions
\begin{equation*}
\chi_{R,K} , \chi_{R, i,\infty} , \quad , i = 0 , \ldots , n 
\end{equation*}
form an $R$ parameterized partition of unity for $X$ in the sense of \cite[Definition 3.1]{CFKS}. In particular they are appropriate for application of the IMS localization formula which first appeared explicitly in \cite{SIG82}. Moreover the functions 
\begin{equation*}
\pi_{\RR}^* \chi_{R,K} , \pi_{\RR}^* \chi_{R, i,\infty} , \quad  i = 0 , \ldots , n 
\end{equation*}
form an $R$ parameterized partition of unity for $X(\RR)$. Let $K(R) = X - \rho^{-1}(0,1/2R)$. This is a compact core for $X$ as it is isolated from the boundary.

The bounding below in Lemmas \ref{cusppos} and \ref{regularpos} relied on the localized functions being compactly supported. In order for this to be the case we use an approximation argument following from the well known fact that on a complete Riemannian manifold $(M,g)$, the smooth compactly supported functions $C^{\infty}_0(M)$ are a core for the Laplacian $\Delta_g$. In other words $C^{\infty}_0(M)$ is dense in $L^2(M)$ with respect to the graph norm
\begin{equation}
\| f \|_{\G(\Delta_g)} \equiv \sqrt{ \|f\|^2_{L^2(M)} + \|\Delta_g f\|^2_{L^2(M)} }.
\end{equation}
This result can be found in the paper of Chernoff \cite{CH73}. Now we state and prove the key Lemma.
\begin{lemma}[Eigenfunction estimates]\label{collar}
For any $\e > 0$, there exists an $R = R(\e)$ and a constant $C = C(\e) > 0$ such that if $\phi$ is a normalized eigenfunction of the Laplacian on $X(\RR)$ with eigenvalue $s(n-s) \in [\d(n-\d), n^2/4 - \e]$ then
\begin{equation}
\int_{\pi_{\RR}^{-1}(K(R))} |\phi|^2 dX(\RR) \geq C > 0,
\end{equation}
uniformly through $\RR$.
\end{lemma}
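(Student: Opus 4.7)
The plan is to apply the IMS localization formula to the eigenfunction $\phi$ using the $R$-parameterized partition of unity $\{\pi_\RR^*\chi_{R,K},\, \pi_\RR^*\chi_{R,i,\infty}\}_{i=0}^n$ lifted to $X(\RR)$ via the local isometry $\pi_\RR$. The identity
\[
s(n-s) = \sum_k \langle \Delta_{X(\RR)}(J_k\phi), J_k\phi\rangle - \sum_k \int |\nabla J_k|^2 |\phi|^2\, dX(\RR),
\]
combined with the $\RR$-uniform lower bounds $\Delta_{X(\RR)} \geq n^2/4$ on each cuspidal neighborhood (Lemma \ref{cusppos}) and $\Delta_{X(\RR)} \geq n^2/4 - \eta$ on the regular infinity region (Lemma \ref{regularpos}), together with $\Delta_{X(\RR)} \geq 0$ on the compact core, will force the $L^2$ mass of $\phi$ in the infinity partition to be bounded away from $1$, provided the gradient cost $\sum_k|\nabla J_k|^2$ is controlled. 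Since $\pi_\RR$ is a local isometry, these gradients are bounded on $X(\RR)$ by the same constants as on $X$, so the $\RR$-uniformity comes at no extra cost.

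Given $\e > 0$, the first step is to fix $\eta = \e/3$ and invoke Lemma \ref{regularpos} to pick $R = R(\e)$ large enough that $\Delta_{X(\RR)} \geq n^2/4 - \e/3$ on $C^\infty_0$ of the regular infinity region. The cutoff $\chi$ is then chosen so that $\|\sum_k|\nabla J_k|^2\|_\infty \leq \e/3$: in the coordinate $\tau = -\log\rho$, in which $|d\tau|_g = 1$, the cross-sectional gradients $|\nabla_g \chi_i|^2 = O(\rho^2) = O(1/R^2)$ are automatically small, while the radial part $|\nabla_g \chi_R|^2 + |\nabla_g\chi_{R,K}|^2$ is governed by the sup of the $\tau$-derivative of $\chi_R$ and can be made as small as desired by spreading its transition over a $\tau$-interval of sufficiently large length. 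Applying IMS to $\phi$, using the lower bound $\langle \Delta_{X(\RR)}(J_k\phi), J_k\phi\rangle \geq (n^2/4 - \e/3)\|J_k\phi\|^2$ on each infinity term (after a secondary inner-cutoff approximation near the boundary at infinity, justified by density of $C^\infty_0$ in the graph norm of $\Delta_{X(\RR)}$ per Chernoff \cite{CH73}), and $\geq 0$ on the core term, one obtains with $\alpha := \sum_i \|\pi_\RR^*\chi_{R,i,\infty}\phi\|^2$ that
\[
s(n-s) \geq (n^2/4 - \e/3)\alpha - \e/3.
\]
Combined with $s(n-s) \leq n^2/4 - \e$, this forces $\alpha \leq 1 - C(\e)$ for $C(\e) := (\e/3)/(n^2/4 - \e/3)$, and the support property of $\pi_\RR^*\chi_{R,K}$ together with $\sum_k J_k^2 = 1$ gives the desired
\[
\int_{\pi_\RR^{-1}(K(R))} |\phi|^2\, dX(\RR) \geq \|\pi_\RR^*\chi_{R,K}\phi\|^2 = 1 - \alpha \geq C(\e).
\]

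The principal obstacle is the quantitative control of the gradient cost: it must be made small relative to the chosen $\e$ while remaining uniform in $\RR$. Uniformity is automatic from the lift, but smallness demands designing $\chi_R$ with a long $\tau$-transition rather than using the naive $\chi(R\rho)$, whose transition has fixed width $\log 2$ in $\tau$ and thus a fixed gradient bound that would only cover $\e$ exceeding a fixed threshold. A secondary technicality is the $C^\infty_0$ approximation needed to invoke the Lemmas: Chernoff's theorem combined with an auxiliary cutoff truncating $J_k\phi$ near $\rho = 0$ reduces to $C^\infty_0$, and the error vanishes in the limit by dominated convergence since $|\phi|^2$ is integrable and the truncating region shrinks toward a set of measure zero.
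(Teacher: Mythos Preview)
Your argument is correct and runs on the same IMS--localization skeleton as the paper, but the two proofs diverge in how the radial gradient cost is disposed of. You stretch the radial cutoff so that its transition in the geodesic coordinate $\tau = -\log\rho$ has length $L \sim \e^{-1/2}$, forcing $\|\sum_k |\nabla J_k|^2\|_\infty \leq \e/3$; the trivial bound $\Delta_{X(\RR)} \geq 0$ on the core piece then suffices, and the inequality $s(n-s) \geq (n^2/4-\e/3)\alpha - \e/3$ finishes directly. The paper instead keeps the naive cutoff $\chi_R(\rho) = \chi(R\rho)$, whose radial gradient is only \emph{bounded} (by $\sup|\chi'|^2$), and compensates by observing that this gradient is supported in the transition annulus $1/(2R) \leq \rho \leq 1/R$, which lies inside $K(R)$. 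This lets the paper collect the gradient cost and the core contribution on the same side as $\int F_R |\phi|^2$, with $F_R = (s(n-s)-\d(n-\d))\chi_{R,K}^2 + |\nabla\chi_R|^2 + |\nabla\chi_{R,K}|^2$ supported in $K(R)$ and merely bounded; the Patterson--Sullivan bound $\d(n-\d)$ on the core is invoked here, though $0$ would also do with a worse constant. Your route is lighter but departs from the specific cutoff fixed earlier in the section; the paper's route sticks with that cutoff at the price of a more careful tracking of supports. One notational point: with your stretched cutoff, $\chi_{R,K}$ is supported in $\{\rho \geq e^{-L}/R\}$ rather than in $K(R) = \{\rho \geq 1/(2R)\}$, so the compact set in your conclusion is really $K(R')$ with $R' \approx R e^{L}$---harmless since the lemma only asks for \emph{some} $R(\e)$, but you should relabel accordingly in the final line.
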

\begin{proof}
Suppose that $\phi$ is a normalized eigenfunction of $\Delta_{\RR}$ on $X(\RR)$ with exceptional eigenvalue $s(n - s) \leq n^2 / 4 - \e$, i.e.
\begin{equation*}
\Delta_{\RR} \phi = s(n-s) \phi , \quad \| \phi \|_{L^{2}(X(\RR))} = 1 .
\end{equation*}
 The IMS localization formula (see \cite[Theorem 3.2]{CFKS} or \cite{SIG82}) tells us how to relate global quantities to local quantities. Using this with the partition of unity $\pi_\RR^* \chi_{R,K}$, $\pi_\RR^* \chi_{R,i,\infty}$ for the quantity $\langle \Delta_\RR \phi , \phi \rangle = s(n-s)$ we have
\begin{align}\label{eq:IMS1} 
s(n-s) &= \langle \Delta_{\RR} (\pi_{\RR}^* \chi_{R,K} ) \phi ,(\pi_{\RR}^* \chi_{R,K}) \phi  \rangle + \sum_{i=0}^{n} \langle \Delta_{\RR} (\pi_{\RR}^* \chi_{R,i,\infty} )\phi , (\pi_{\RR}^* \chi_{R,i,\infty} ) \phi \rangle \\
&- \langle |\nabla_{\RR}(\pi_{\RR}^* \chi_{R,K})|^2 \phi, \phi \rangle - \sum_{i=0}^{n} \langle |\nabla_{\RR}(\pi_{\RR}^* \chi_{R,i,\infty})|^2 \phi, \phi\rangle . \nonumber
\end{align}
The first term is estimated 
\begin{equation}\label{eq:est1}
\langle \Delta_{\RR} ( \pi_{\RR}^* \chi_{R,K}) \phi,(\pi_{\RR}^* \chi_{R,K}) \phi \rangle \: \geq \: \d(n - \d) \| ( \pi_{\RR}^* \chi_{R,K} )\phi \|^2_{L^{2}(X(\RR))},
\end{equation}
by the Patterson-Sullivan description of the bottom of the spectrum. Let $\{\vp_k\}_{k=1}^\infty$ be a sequence in $C^{\infty}_0(X(\RR))$ which goes to $\phi$ in the graph norm. By Lemmas \ref{cusppos} and \ref{regularpos} we can increase $R$ independently of $k$ and $\RR$ so that for all $i$
\begin{equation*}
\langle \Delta_{\RR} ( \pi_{\RR}^* \chi_{R,i,\infty}) \vp_k , ( \pi_{\RR}^* \chi_{R,i,\infty}) \vp_k \rangle \: \geq \: (n^2 / 4 - \e /4) \| ( \pi_{\RR}^* \chi_{R,i,\infty}) \vp_k \|^2_{L^{2}(X(\RR))}.
\end{equation*}
Taking the limit in $k$ to get the corresponding statement for $\phi$ and summing over $i$ we have
\begin{equation}\label{eq:est2}
 \sum_{i=0}^{n} \langle \Delta_{\RR} ( \pi_{\RR}^* \chi_{R,i,\infty}) \phi ,( \pi_{\RR}^* \chi_{R,i,\infty})\phi\rangle \: \geq \: (n^2 / 4 - \e /4) \| (\pi_{\RR}^* \chi_R) \phi \|^2_{L^{2}(X(\RR))} .
\end{equation}
The remaining terms in \eqref{eq:IMS1} can be estimated by noting
\begin{equation*}
\nabla_{\RR}(\pi_{\RR}^* \chi_{R,i,\infty}) = \pi_{\RR}^* (\nabla_{(1)}\chi_{R,i,\infty}),
\end{equation*}
so using the product rule for the gradient, and that the projection is a local isometry,
\begin{align*}
|\nabla_{\RR}(\pi_{\RR}^* \chi_{R,i,\infty})|^2 &=  \pi_{\RR}^* \left(  \chi_R^2 |\nabla_{(1)}\chi_i|^2 + \chi_i^2|\nabla_{(1)}\chi_R|^2 + 2 \chi_R \chi_i \langle \nabla_{(1)}\chi_i , \nabla_{(1)}\chi_R \rangle \right) \\
&=  \pi_{\RR}^* \left(  \chi_R^2 |\nabla_{(1)}\chi_i|^2 + \chi_i^2|\nabla_{(1)}\chi_R|^2  \right), 
\end{align*}
where the last term on the first line vanished due to the form of the metric. Summing over $i$, and using the estimates \eqref{eq:est1} and \eqref{eq:est2} in \eqref{eq:IMS1} we have
\begin{align}
s(n-s)\: &\geq \: \d(n - \d) \| (\pi_{\RR}^* \chi_{R,K}) \phi \|^2_{L^{2}(X(\RR))} + (n^2 / 4 -\e / 4) \| (\pi_{\RR}^* \chi_R) \phi \|^2_{L^{2}(X(\RR))} \\
&- \langle \pi_{\RR}^* (|\nabla_{(1)}\chi_R|^2 + |\nabla_{(1)}\chi_{R,K}|^2) \phi, \phi \rangle - \langle \pi_{\RR}^* (\chi_R^2 \sum_{i=0}^n |\nabla_{(1)}\chi_i|^2 ) \phi, \phi \rangle  . \nonumber 
\end{align}
The terms on the second line will be estimated by $L^\infty$ norms which are preserved under $\pi_{\RR}^*$. Now we observe that $ \chi_R^2 |\nabla_{(1)}\chi_i |^2 $ is supported only for $\rho \leq 1/R$, and there
 \begin{equation*} 
\chi_R^2 |\nabla_{(1)}\chi_i |_g^2 \leq  |\nabla_{(1)}\chi_i |_g^2 = \rho^2 |\nabla_{u, euc} \chi_i |_{euc}^2 \leq 1/R^2 |\nabla_{u, euc} \chi_i |_{euc}^2 ,
 \end{equation*}
 where $\nabla_{u, euc}$, $|.|_{euc}$ refer to the Euclidean gradient and metric for the $u$ coordinate in the regions $\bar{M}_k$. Therefore we can increase $R$ so that 
 \begin{equation*}
 \|  \chi_R^2 \sum_{i=0}^n |\nabla_{(1)}\chi_i|^2 \|_\infty < \e / 4 .
 \end{equation*}
Incorporating these estimates we have, letting
\begin{equation*}
F_R =  (s(n-s) - \d(n - \d)) \chi^2_{R,K} + |\nabla_{(1)}\chi_R|^2 + |\nabla_{(1)}\chi_{R,K}|^2 ,
\end{equation*}
then
\begin{align*}
\| F_R^{1/2} \|_\infty^2 \| \phi \|^2_{|L^{2}(\pi_{\RR}^{-1}(K(R)))} &\geq  \|  (\pi_{\RR}^* F_R )^{1/2} \phi \|^2_{L^{2}(X(\RR))}\\
&\geq (n^2 / 4 - \e /2- s(n-s) ) \|  \phi \|^2_{L^{2}(X(\RR) - \pi_{\RR}^{-1}(K(R)))}.
\end{align*} 
Now we note that  $\| F_R^{1/2} \|_\infty$ is uniformly bounded as $R \to \infty$. This follows from $s(n-s) < n^2 / 4$, and for example
\begin{equation*}
|\nabla_{(1)}\chi_R|_g^2 = |\rho^2 R \chi' |_g^2 = \rho^2 R^2 |\chi'|^2 \leq |\chi'|^2 ,
\end{equation*}
where $'$ denotes a derivative. Then we have shown
\begin{align*}
1  &=  \| \phi \|^2_{|L^{2}(\pi_{\RR}^{-1}(K(R)))}  +   \|  \phi \|^2_{L^{2}(X(\RR) - \pi_{\RR}^{-1}(K(R)))} \\
&\leq \left(1 +  \frac{\| F_R^{1/2} \|^2_\infty}{(n^2 / 4 - \e / 2  - s(n-s)  )}\right)\| \phi \|^2_{|L^{2}(\pi_{\RR}^{-1}(K(R)))} \\
&\leq \left(1 +  \frac{\| F_R^{1/2} \|^2_\infty}{\e/2}\right)\| \phi \|^2_{|L^{2}(\pi_{\RR}^{-1}(K(R)))} \\
&\leq \left(1 +  C_0(\e) \right)\|\phi\|^2_{|L^{2}(\pi_{\RR}^{-1}(K(R)))} \\
&= \left(1 +  C_0(\e) \right) \int_{\pi_{\RR}^{-1}(K(R))} |\phi |^2 dX(\RR)
\end{align*}
which establishes the result by taking 
\begin{equation*} 
C(\e) = 1/(1+ C_0(\e)) > 0.
\end{equation*}
\end{proof}
\section{Analytic Preparations}\label{analysis}
\subsection{Estimates for terms appearing in the trace formula}
Let $o$ denote the point corresponding to $(0 , \ldots , 0 , 1 )$ in the hyperboloid model for $\H^{n+1}$. We write $G = \SO^{0}(n + 1, 1 )$, $K = \SO(n+1)$ the maximal compact subgroup. $T$ will be some generating parameter throughout the rest of the paper. For the details on spherical functions the reader can see \cite{HEL08}. For the representation theory we refer to \cite{KNA86}. All further ideas in this section are due to Sarnak and Xue \cite{SX91}.

The lattice point count relates to harmonic analysis by consideration of the function $\chi_T : G \to \R$
\begin{equation}
\chi_T ( g ) =  \left\{
    \begin{array}{rl}
      1 & \text{if } d(o , g(o) ) \leq  T ,\\
   
      0 & \text{if } d(o, g(o))  > T .
    \end{array} \right.
\end{equation}

For any $\lambda = s(n - s) \in (0,n^2/4)$ we can consider the associated complimentary series representation $\pi_s$.  This contains a normalized spherical ($K$-invariant) vector $v$. From this data we construct the spherical function
\begin{equation}
\phi_s(g) \equiv \langle \pi_s(g) v , v \rangle .
\end{equation}
We let $f_s (g) = \chi_T(g) \phi_s (g)$, and $F_s = f_s * \overline {f_s } $ with $\overline {f_s} (g) = \overline{ f_s(g^{-1} ) }$. Then following \cite[Lemma 2.1]{SX91}  we have $F_s \in C_0(K \backslash G / K)$ and
\begin{equation}\label{eq:upper}
F_s(g) \ll   \left\{ \begin{array}{rl}
      e^{2(s - n/2)T } e^{-\frac{n}{2}d(o, g(o))} & \text{if } d(o , g(o) ) \leq  2T ,\\
   
      0 & \text{if } d(o, g(o))  > 2T .
    \end{array}\right.
\end{equation}
The implied constant can be taken uniformly for $s \in I \subset (n/2  , \delta ]$, for $I$ a closed interval.
As $F_s$ is $K$-biinvariant there is an associated spherical transform of $F_s$, defined for $\lambda_t = t(n-t)$
\begin{equation}
\hat{F_s}(\lambda_t) = \int_G F_s(g) \overline{\phi_t (g)} dG .
\end{equation}
In fact, the spherical transform is a $*$-homomorphism so we have, evaluating at $\lambda = \lambda_s$ 
\begin{equation} 
\hat{F_s}(\lambda) = |f_s(\lambda) |^2 =  \left( \int_G \chi_T(g) |\phi_s(g)|^2 dG \right)^2 ,
\end{equation}
and using the property
\begin{equation}
\phi_s (\exp tX) \gg e^{(s - n) t}
\end{equation}
we have 
\begin{equation}\label{eq:lower}
\hat{F_s}(\lambda)  =  \left( \int_G \chi_T(g) |\phi_s(g)|^2 dG \right)^2 \gg \left( \int_0^T e^{2(s-n)t}e^{nt} dt \right)^2 \gg e^{4(s- n/2)T} .
\end{equation}
Moreover, as before the implied constant can be chosen uniformly for $s$ in compact $I  \subset (n/2  , \delta ]$.

\subsection{Lattice point count}
Now we estimate the quantity
\begin{equation}
N(\Gamma(\RR) , T ) \equiv | \{ \g \in \G(\RR) : d(o, \g o) \leq T \} |.
\end{equation}
In \cite{SX91} Sarnak and Xue conjectured that
\begin{equation}
N(\Gamma(\RR) , T ) \ll_\e \frac{e^{nT(1+\e)}}{[ \G : \G(\RR)]} + e^{nT/2}
\end{equation}
for $\G$ an arithmetic lattice in $\SO(n+1,1)$. They established this result for $n = 1,2$ by a direct counting argument.

For $n \geq 3$ we will rely on a result of Kelmer and Silberman from \cite{KS10}. This uses the spectral theory at the cofinite level (for $\G$). The best known spectral gap when $n \geq 3$ is given by Theorem \ref{BC} (a result of Bergeron and Clozel from \cite{BC12}). This tells us that if $s > n/2$ and $s(n-s)$ is a nonzero eigenvalue for $\H^{n+1} / \Gamma(\RR)$ then $s \leq n - 1$.

The consequence for the lattice point count in $\SO(n+1,1)$, $n \geq 3$ is
\begin{equation}
N(\Gamma(\RR) , T ) \ll \frac{e^{nT}}{[ \G : \G(\RR)]} + e^{(n - 1)T}
\end{equation}
uniformly in $T , \RR$. This result appears in \cite[Theorem 2]{KS10}. Using the estimate on the size of the factor group from Lemma \ref{generalfinite}, we have the following bound for the lattice point count.
\begin{lemma}[Lattice point count]\label{lattice} For any $\e > 0$ and $n \geq 2$ we have
\begin{equation}
N(\Gamma(\RR) , T ) \ll_\e \frac{e^{nT(1+\e)}}{|\O_F / \RR|^{(n+2)(n+1)/2}} + e^{(n - 1 )T}.
\end{equation}
\end{lemma}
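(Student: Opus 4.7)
The plan is to substitute the lower bound on $[\G : \G(\RR)]$ coming from strong approximation into the lattice point estimates already quoted in the preceding discussion. This reduces the proof to a single size calculation plus invocation of the cited external bounds.

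First, I would observe that the inclusion $\Lambda_1 \hookrightarrow \G$ induces an injection $\Lambda_1/\Lambda_1(\RR) \hookrightarrow \G/\G(\RR)$, so Lemma \ref{generalfinite} immediately gives
\[
[\G : \G(\RR)] \geq |\Lambda_1/\Lambda_1(\RR)| \gg |\O_F/\RR|^{(n+1)(n+2)/2},
\]
uniformly for $\RR$ coprime to the fixed finite set of exceptional primes. This is the only arithmetic input needed, and it is already in hand from the previous section.

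Next, I would split into cases by dimension. For $n \geq 3$, the Kelmer--Silberman estimate (built on the Bergeron--Clozel spectral input of Theorem \ref{BC}) reads
\[
N(\G(\RR), T) \ll \frac{e^{nT}}{[\G : \G(\RR)]} + e^{(n-1)T}
\]
with implied constant uniform in $T$ and $\RR$, and substituting the index bound yields the claim (in fact with no $\e$ loss). For $n = 2$, the direct counting argument of Sarnak and Xue gives
\[
N(\G(\RR), T) \ll_\e \frac{e^{nT(1+\e)}}{[\G : \G(\RR)]} + e^{nT/2},
\]
and since $e^{nT/2} = e^{(n-1)T}$ when $n = 2$, the same substitution completes the proof. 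There is no real obstacle here: the lemma is a packaging step that makes visible the polynomial savings in $|\O_F/\RR|$ contributed by strong approximation, and the $\e$ is present in the final statement only to accommodate the $n=2$ case.
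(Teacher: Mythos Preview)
Your proof is correct and follows essentially the same approach as the paper: the lemma is presented there as the immediate consequence of the Kelmer--Silberman bound (for $n\geq 3$, with Bergeron--Clozel spectral input) and the Sarnak--Xue count (for $n=2$), after substituting the size estimate $[\G:\G(\RR)] \gg |\O_F/\RR|^{(n+1)(n+2)/2}$ from Lemma \ref{generalfinite}. Your explicit justification of the index lower bound via the injection $\Lambda_1/\Lambda_1(\RR) \hookrightarrow \G/\G(\RR)$ makes precise a step the paper leaves implicit.
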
    
\section{Proof of Main Theorem}\label{proof}
Let $I = [a ,\delta]$ a closed interval for some $a > n/2$. Replace $\Lambda$ with the $\Lambda_1$ of Section \ref{algebra} if necessary. We aim to apply the pre-trace formula to the automorphic kernel on $\H^{n+1} \times \H^{n+1}$ corresponding to $F_s$ at level $\RR$ i.e.
\begin{equation}
K_\RR(x_1, x_2) = \sum_{\g \in \Lambda(\RR)} F_s(g_{x_1}^{-1} \g g_{x_2}) ,
\end{equation}
where we write $g_{x}$ for any group element such that $g_x (o) = x$.
We have for the spectral decomposition of the automorphic kernel, as in \cite[Proposition 5.2]{GAM02}, 
\begin{equation}
K_\RR(x, x) = \sum_{\lambda_{j,\RR} < n^2 / 4}  \hat{F_s}(\lambda_{j,\RR})|\psi_j(x)|^2 + \mathcal{E} ,
\end{equation}
where $\mathcal{E}$ is some \textit{nonnegative} contribution from the continuous spectrum. The $\lambda_{j,\RR}$ are the eigenvalues of the Laplacian on $X(\RR)$ below $n^2/4$, counted with multiplicities. The $\psi_i$ are the corresponding (lifted) eigenfunctions. There are only finitely many such eigenfunctions by the work of Lax and Phillips \cite{LP82}.

We can now apply the eigenfunction estimates (Lemma \ref{collar}) to find a compact part $\K \subset X$ such that for all $s_i \in I$
\begin{equation} 
\int_{\pi_\RR^{-1}(\K)} |\psi_j(x)|^2 dX(\RR) \geq C > 0 
\end{equation}
uniformly through $s_i \in I$ and $\RR$. This implies
\begin{align*}
\int_{\pi_\RR^{-1}(\K)} K_\RR(x, x) dX(\RR) &\geq  \sum_{\lambda_{j,\RR} < n^2 / 4} \hat{F_s}(\lambda_{j,\RR}) \int_{\pi_\RR^{-1}(\K)} |\psi_j(x)|^2 \\
&\geq \sum_{\lambda_{j,\RR} : s_j \in I} \hat{F_s}(\lambda_{j,\RR}) \int_{\pi_\RR^{-1}(\K)} |\psi_j(x)|^2 \\
&\geq C  \sum_{\lambda_{j,\RR} : s_j \in I} \hat{F_s}(\lambda_{j,\RR}).
\end{align*}
In particular, for any $s \in I$, if $\lambda = s(n-s)$ appears as an eigenvalue of $\Delta_{X(\RR)}$
\begin{equation}\label{eq:lowertrace}
\int_{\pi_\RR^{-1}(\K)} K_\RR(x,x) dX(\RR) \gg \hat{F_s}(\lambda) \gg  e^{4(s- n/2)T} m(\lambda, \RR),
\end{equation}
where $m(\lambda, \RR)$ is the multiplicity of the eigenvalue and the implied constant is uniform through all $s \in I$. The last inequality is a result of the estimate for the spherically transformed kernel in \eqref{eq:lower}.

On the other hand
\begin{align*}
\int_{\pi_\RR^{-1}(\K)} K_\RR(x,x) dX(\RR) &= \sum_{\g \in \Lambda(\RR)} \int_{\pi_\RR^{-1}(\K)} F_s (g_x^{-1} \g g_x ) dX(\RR) \\
&= \sum_{\g \in \Lambda(\RR)} \sum_{l \in \Lambda / \Lambda(\RR) } \int_{\K} F_s (g_x^{-1} l^{-1} \g l g_x ) dX \\
&\ll |\O_F / \RR|^{(n+2)(n+1)/2}  \sum_{\g \in \Lambda(\RR)}  \int_{\K} F_s (g_x^{-1}  \g  g_x ) dX \\
&\ll  |\O_F / \RR|^{(n+2)(n+1)/2}  \sum_{\g \in \Gamma(\RR)}  \int_{\K} F_s (g_x^{-1}  \g  g_x ) dX .
\end{align*}
The penultimate inequality uses the bound on the size of the group $\Lambda / \Lambda(\RR)$ given in Lemma \ref{generalfinite}. The last inequality is a result of the rather crude observation that $\Lambda(\RR) \subset \Gamma(\RR)$. Using the upper bound \eqref{eq:upper} we have
\begin{align}
\sum_{\g \in \Gamma(\RR)}  \int_{\K} F_s (g_x^{-1}  \g  g_x ) dX &\ll e^{2(s - n/2)T }  \sum_{\g \in \Gamma(\RR)} \int_{x \in \K : d(x,\g x) \leq 2T}   e^{-\frac{n}{2}d(x, \g x)} dX.
\end{align}
As $\K$ is compact there exists $R$ such that $\K \subset B(o,R)$ in $\H^{n+1}$. This gives
\begin{equation*}
 d(o , \g o ) \leq d(o, x ) + d(x, \g x ) + d ( \g x , \g o ) \leq 2R + d(x , \g x),
\end{equation*} 
and 
\begin{equation*}
 d(x , \g x) \leq 2R + d(o , \g o).
 \end{equation*}
Then 
\begin{align*}
 \sum_{\g \in \Gamma(\RR)} \int_{x \in \K : d(x,\g x) \leq 2T}   e^{-\frac{n}{2}d(x, \g x)} dX &\ll  \sum_{\g \in \Gamma(\RR) : d(o,\g o) \leq 2T + 2R }  e^{-\frac{n}{2}d(o, \g o)} dX \\
&\ll \int_{0}^{2T + 2R}  e^{-\frac{n}{2}t} N(\G(\RR) , t) dt ,
\end{align*}
by integrating by parts. By using the lattice point bound from Lemma \ref{lattice} we estimate
\begin{align*}
\int_{0}^{2T + 2R}  e^{-\frac{n}{2}t} N(\G(\RR) , t) dt  &\ll_\e \int_{0}^{2T + 2R}  \frac{e^{nt(1/2+\e)}}{|\O_F / \RR|^{(n+2)(n+1)/2}} + e^{(n/2 - 1)t} \\
& \ll_\e \frac{e^{nT(1+2\e)}}{|\O_F / \RR|^{(n+2)(n+1)/2}} + e^{(n - 2)T} .
\end{align*}
Gathering together we have the upper bound 
\begin{equation}\label{eq:uppertrace}
\int_{\pi_\RR^{-1}(\K)} K_\RR(x,x) dX(\RR) \ll_\e e^{2(s - n/2)T } \left( e^{nT(1+2\e)} + |\O_F / \RR|^{(n+2)(n+1)/2}e^{(n - 2)T} \right),
\end{equation}
so that we now have upper and lower bounds for the partial trace. Equations \eqref{eq:uppertrace} and \eqref{eq:lowertrace} give
\begin{equation*}
e^{2(s- n/2)T} m(\lambda, \RR) \ll_\e e^{nT(1+2\e)} + |\O_F / \RR|^{(n+2)(n+1)/2}e^{(n - 2)T}.
\end{equation*}
This is the keystone of the proof. Our previous work on the multiplicities and the lattice point count will now play together to forbid certain values of $s$. Using Lemma \ref{generalfinite} to estimate $m(\lambda,\RR)$ we now have for any $\e > 0$
\begin{equation}
e^{2(s- n/2)T} |\O_F / \RR|^{n-1} \ll_\e e^{nT(1+2\e)} + |\O_F / \RR|^{(n+2)(n+1)/2}e^{(n - 2)T}.
\end{equation}
Taking $T \approx ((n+1)(n+2)/4) \ln |\O_F / \RR| $ we have for all $\e > 0$
\begin{equation}
|\O_F / \RR|^{(2s - n)(n+1)(n+2)/4 + n - 1} \ll_\e  |\O_F / \RR|^{(1 + 2\e)n(n+1)(n+2) / 4}
\end{equation}
as $|\O_F / \RR| \to \infty$, which can only be true if
\begin{equation*}
s \leq s^0_n \equiv n - \frac{2(n-1)}{(n+1)(n+2)}.
\end{equation*}
Recall that at the start we were free to choose an interval $I_s$ throughout which we had uniformity. Going back and choosing
\begin{equation*}
I = [s^0_n, \delta], 
\end{equation*}
which makes sense as long as $\delta > s^0_n$, we have proved Theorem \ref{maintheorem}. Corollary \ref{maincorollary} follows directly.

\section{Construction of thin groups with thick limit sets}\label{construct}
 Our main theorem (Theorem \ref{maintheorem}) gives a quantitative spectral gap for infinite covolume subgroups of hyperbolic isometries, provided the Hausdorff dimension of the limit set is large enough (and some other conditions are met). In other words, the 'bass note' of the quotient manifold must be low. In order to find such a group, we must remain safe from Doyle's pretty result \cite{D88}, which states that there is a universal upper bound for the Hausdorff dimension of the limit sets of Schottky groups in $\Isom(\H^3)$.
 
The following procedure for constructing thin subgroups of hyperbolic arithmetic lattices with arbitrarily large Hausdorff dimension is due to McMullen \cite{MCMEMAIL}. He has communicated a method which takes as input a compact $n+1$ dimensional hyperbolic manifold $M$ which arises as an arithmetic quotient $\H^{n+1} / \G$ and which has an embedded totally geodesic hypersurface $S$ such that $[S] \neq 0$ in $H_n(M, \Z)$. The output is a set of subgroups of $\G$ which are geometrically finite, infinite index and have Hausdorff dimensions of the limit set arbitrarily close to $n$. This yields groups for which our main theorem (Theorem \ref{maintheorem}) applies. This construction is similar in nature to the construction of Gamburd in the last section of \cite{GAM02}. Slightly more care is required in higher dimension, for example because of the inequivalence of geometrical finiteness and finite generation.

The required input arises in the work of Millson \cite{MIL76}, whose development we recount now. We take as our field $F = \Q(\sqrt{p})$ with $p$ prime and consider the quadratic form
\begin{equation}
q(x_1 , \ldots , x_{n+2} )  = x_1^2 + x_2^2 + \ldots x_{n+1}^2 - \sqrt{p} \: x_{n+2}^2 .
\end{equation}
We then take $\G_0$ to be the subgroup of $\mathrm{GL}_{n+2}(\O_F)$ which preserves $q$. The quadratic form $q$ is conjugate to $\mathrm{diag}(1 , \ldots , 1 , -1)$ by a diagonal matrix over $\R$, via conjugation with the same element we can realize $\G_0$ as a group of isometries of the hyperbolic plane $\H^{n+1}$. We let $\G$ be the orientation preserving subgroup of $\G_0$. As we have remarked in a previous section, $\G$ is discrete. Moreover $\G$ is uniform (cocompact) as follows. There are no rational zeros of $q$, and $\G$ contains no nontrivial unipotent elements (if $\g \in \G$ is unipotent then so is its Galois conjugate, which is an element of a definite orthogonal group). The work of Mostow and Tamagawa \cite{MT62} then gives uniformity of $\G$. It is clear that $\G$ is geometrically finite.
 
We next consider the congruence subgroups $\G(\P)$ for $\P$ prime in $F$. As in \cite{MIL76} for $\P$ of large enough norm $\G(\P)$ is torsion-free and the quotient $\H^{n+1} / \G(\P)$ is a compact manifold of constant negative curvature. Millson considers the involution $\iota$ in $\G$ which is the reflection in the plane $x_1 = 0$. As $\G(\P)$ is normal in $\G$ then $\iota$ normalizes $\G(\P)$ and hence descends to an involution on $\H^{n+1} / \G(\P)$. For $\P$ not of norm two and such that $\G(\P)$ is torsion free we have then a manifold $M(\P) = \H^{n+1} / \G(\P)$ with an involution $\iota$ whose fixed point set contains a constantly negatively curved, orientable, codimension one submanifold $S(\P)$. Millson goes on to show that by passing to congruence subgroups 
\begin{equation}
\G(\P , \P') \equiv \{ \g \in \G  : \g \equiv I \mod \P , \: \g \equiv I \mod \P' \}
\end{equation}
with $\P$ and $\P'$ of large enough norm we obtain a manifold $M$ = $\H^{n+1} / \G(\P , \P')$ and a corresponding embedded totally geodesic hypersurface $S$ such that $[S] \neq 0 \in H_n(M,\Q)$. We have $\pi_1(M) = \G(\P , \P')$ a geometrically finite and finite index subgroup of a uniform arithmetic lattice.

We are now ready to apply McMullen's construction \cite{MCMEMAIL}. 

Let $\doublecap : H_k(M , \Z) \times H_{n+1 - k}(M, \Z) \to H_0(M,Z) \cong \Z $ denote intersection number and consider the homomorphism
\begin{equation}
\phi : \pi_1(M) \to \Z , \quad \phi(\g) = [S] \doublecap [\g] ,
\end{equation}
by using the usual map $\pi_1(M) \to H_1(M,\Z)$. Poincar{\'e} duality gives that $\phi$ is defined and nonzero (the reader can see Hatcher \cite{HAT02} for details), as $S$ is oriented and embedded $\phi$ is moreover onto. The kernel $\ker \phi$ consists of classes with generic representatives which cross $S$ the same number of times in each direction relative to a fixed orientation of $S$. 

We can cut $M$ open along $S$ to give a connected manifold with boundary $M_{cut}$. The boundary $\delta M_{cut}$ consists of two connected components $S_1$ and $S_2$ which are both isometric to $S$. Recall that $S$ was obtained as a quotient of a totally geodesic hyperplane $\Pi \subset \H^{n+1}$. In this case it is known that $\pi_1(M) = \pi_1(M_{cut})*_{\pi_1(S)}$, the HNN extension of $\pi_1(M_{cut})$ with respect to the maps $i_{j*}  : \pi_1(S_j) \to \pi_1(M_{cut})$ induced from the inclusions of the boundary components of $M_{cut}$. Formally the HNN extension is generated by $\pi_1(M_{cut})$ and an element $t$ subject to the relations 
\begin{equation}
t i_1(\g) t^{-1} = i_2(\g) ,
\end{equation}
concretely this element $t$ can be chosen to be anything with $\phi(t) = 1$. Fix such a $t$.

To obtain manifolds with many repeated sections we consider the collection of hyperplanes 
\begin{equation}
\mathcal{C}_k =  \{ \g \Pi : \g \in \phi^{-1}(0) = \ker\phi \}  \cup \{ \g \Pi : \g \in \phi^{-1}(k) \}. 
\end{equation}Distinct hyperplanes in this collection do not intersect, and there is a connected component of $\H^{n+1} - \mathcal{C}_k$ which meets all of the hyperplanes in $\mathcal{C}_k$, we write $R_k$ for the closure of this connected component. We let $\G_k = \mathrm{Stab}_{\pi_1(M)} ( R_k)$ so that for $k = 1$ we have that $\G_1  = \pi_1(M_{cut})$ and $R_1$ a universal cover for $M_{cut}$, i.e. $M_{cut} = R_1 / \G_1$.

The quotient $R_k / \G_k$ is then the manifold with boundary which is obtained by gluing copies $M_{cut}^{i}$, $i = 1 , \ldots , k$ together by identifying one boundary component of $M_{cut}^i$ with the opposite boundary component of $M_{cut}^{i+1}$, leaving two boundary components unglued (coming from the first and last copies of $M_{cut}$).

We now note some properties of $\G_k$. As $\G_k$ is contained in the kernel of $\phi$ it is of infinite index in $\pi_1(M)$. As $R_k$ is invariant under $\G_k$ the limit set $L(\G_k)$ of $\G_k$ must lie between the boundary of $\Pi$ and the boundary of $t^k\Pi$. This implies that the quotient $CL(\G_k) / \G_k$ of the closed convex hull of $L(\G_k)$ by $\G_k$ has finite volume (in fact it is compact). Also noting that $\G_k$ is finitely generated implies that $\G_k$ is geometrically finite by a result of Bowditch \cite{BOW93}. 

We have constructed $\G_k$, an infinite index geometrically finite subgroup of an arithmetic lattice in $\SO(n+1 , 1)$. It remains to show that by making $k$ large enough we can force the Hausdorff dimension of the limit set $\delta(L(\G_k))$ to be as close to $n$ as we like. Equivalently by the work of Sullivan \cite{SUL82} we can show that there are arbitrarily small eigenvalues of the Laplacian on $X_k \equiv \H^{n+1} / \G_k$ for $k$ large. This will follow by finding functions $u_k$ with small Rayleigh quotient
\begin{equation} \label{eq:Ray}
\frac{ \int_{X_k} \| \nabla u_k \|^2 d X_k}{ \int_{X_k} |u_k|^2 dX_k}.
\end{equation}
We have $R_k  /  \G_k$ a closed subset of $X_k$, consisting of $k$ glued copies $M^i_{cut}$ of $M_{cut}$. Let $f_+ \in C^{\infty}(M_{cut})$ be such that $f_{+}\lvert_{S_1} \equiv 0$ and $f_{+}\lvert_{S_2} \equiv 1$ and which is locally constant in a neighbourhood of the boundary. Now we define for $k \geq 2$
\begin{equation}  
u_k  =\left\{ \begin{array}{rl}
      f_+ & \text{on } M^1_{cut}  \\
      1 & \text{on }  M^{i}_{cut} , \: i = 2 ,\ldots , k-1 \\
      (1 - f_+)& \text{on } M^k_{cut} \\
      0 & \text{on } X_k - R_k / \G_k  . \\
    \end{array}\right. 
\end{equation}
Calculation of the Rayleigh quotient \eqref{eq:Ray} gives
\begin{equation}
\frac{ \int_{X_k} \| \nabla u_k \|^2 d X_k}{ \int_{X_k} |u_k|^2 dX_k} = \frac{ 2 \int_{M_{cut}} \| \nabla f_+ \|^2 d M_{cut}}{ \int_{M_{cut}} (f_+)^2 + (1-f_+)^2  d M_{cut} + (k-2)\mathrm{vol}(M_{cut}) }
\end{equation}
which tends to $0$ as $k \to \infty$ (the only term depending on $k$ is the last summand in the denominator). This completes the construction.


\end{document}